\def\addlegendimage{\csname pgfplots@addlegendimage\endcsname}
\newtheorem{definition}{Definition}
\newtheorem{theorem}{Theorem}
\newtheorem{lemma}{Lemma}
\newtheorem{proposition}{Proposition}
\newtheorem{conjecture}{Conjecture}
\newtheorem{assumption}{Assumption}
\theoremstyle{remark}
\newtheorem{remark}{Remark}
\newcommand{\vc}{\bm}
\newcommand{\calC}{\mathcal{C}}
\newcommand{\calT}{\mathcal{T}}
\newcommand{\calN}{\mathcal{N}}
\newcommand{\1}{\mbox{1}\hspace{-0.25em}\mbox{l}}
\newcommand{\ol}{\overline}
\newcommand{\bbR}{\mathbb{R}}
\def\P{\mathrm{P}}
\def\E{\mathrm{E}}
\DeclareMathOperator*{\argmin}{arg\,min}
\newcommand*\diff{\mathop{}\!\mathrm{d}}
\newcommand{\parrow}{\:\to_{\mathbb P}\:}
\begin{document}

\title{Strategic arrivals to a queue with service rate uncertainty\footnote{To appear in Queueing systems: Theory and Applications.}}

\author[1,2]{Liron Ravner}
\author[3]{Yutaka Sakuma}
\affil[1]{\small{University of Amsterdam}}
\affil[2]{\small{Eindhoven University of Technology}}
\affil[3]{\small{National Defense Academy of Japan}}

\date{\today}
\maketitle

\begin{abstract}
We study the problem of strategic choice of arrival time to a single-server queue with opening and closing times when there is uncertainty regarding service speed. A Poisson population of customers choose their arrival time with the goal of minimizing their expected waiting times and are served on a first-come first-served basis. There are two types of customers that differ in their beliefs regarding the service time distribution. The inconsistent beliefs may arise from randomness in the server state along with noisy signals that customers observe. Customers are aware of the two types of populations with differing beliefs. We characterize the Nash equilibrium dynamics for exponentially distributed service times and show how they substantially differ from the model with homogeneous customers. We further provide an explicit solution for a fluid approximation of the game. For general service time distributions we provide an algorithm for computing the equilibrium in a discrete time setting. We find that in equilibrium customers with different beliefs arrive during different (and often disjoint) time intervals. Numerical analysis further shows that the mean waiting time increases with the coefficient of variation of the service time. Furthermore, we present a learning agent-based model (ABM) in which customers make joining decisions based solely on their signals and past experience. We numerically compare the long-term average outcome of the ABM with that of the equilibrium and find that the arrival distributions are quite close if we assume (for the equilibrium solution) that customers are fully rational and have knowledge of the system parameters, while they may greatly differ if customers have limited information or computing abilities.
\end{abstract}

\section{Introduction}\label{sec:Intro}

A bottleneck arrival game is typically modeled as a queue that operates during a specified period of time and has to serve a finite population of incoming customers who can choose their arrival time. In the queueing literature this is known as the ?/M/1 model introduced by Glazer and Hassin in \cite{GH1983}. The characteristics of a congested bottleneck queue are often not the same on different days and customers arriving at the bottleneck may not be aware of the current state on a given day. For example, the duration of an airport security check can differ from day to day and location to location depending on many variables such as staffing issues and specific security alerts. Moreover, customers may have different beliefs regarding the speed of service due to past experience and private information obtained from external sources such as the media or friends. Customers can choose when to arrive to the queue with the goal of minimizing their expected waiting times, but the uncertainty regarding the service times implies that customers are heterogeneous when evaluating expected waiting times. We study the simplest model for this situation by assuming that there are two possible service states, ``fast" and ``slow", and two distinct populations of customers each believing that the state is one of the possibilities. We assume that the total number of customers arriving at any given day is a Poisson random variable and analyze both continuous time and discrete times cases.

A Nash equilibrium is given by a set of, possibly mixed, arrival strategies for every type of customer belief. For the case of exponential service times we characterize the Nash equilibrium arrival dynamics of both customer types and show that solving them is a much more challenging task than in the standard model with homogeneous customer beliefs (e.g., \cite{HK2011}). Partial results for the equilibrium arrival strategies are derived along with a conjecture on a general solution. The intractability of the continuous time Markovian model leads us to consider a deterministic fluid approximation of a large scale system (many customers with very small jobs) which can be solved explicitly. We further consider a discrete time system with general service times where customers can only be admitted into the queue at specified time slots. If the time slots are very close then this can approximate the continuous time setting, but the model also captures the dynamics of a system with limited admission times (e.g., a day with three admission periods: morning, afternoon and evening). Furthermore, moving away from exponential service times enables the examination of the effect of the variance of the service time distribution. This framework is useful because it is very general, but explicit analysis seems intractable and so we present an algorithmic procedure for the computation of the Nash equilibrium arrival distributions. For the continuous and discrete time settings we show that there are multiple types of equilibrium arrival distributions, including everyone arriving at the opening, one class of customers at the beginning and the other according to a mixed distribution during the working hours, or both arriving according to a mixed distribution.

We further present a random service model with noisy signals that may result in inconsistent customer beliefs due to information asymmetry. Suppose that the server state is assigned randomly every day according to a fixed probability $p\in[0,1]$ (for slow service) and a signaling mechanism reveals the true state with a high probability $q>\frac{1}{2}$. This model is similar to the setting of the join or balk game studied in \cite{DV2014}. This results in seemingly inconsistent customer beliefs regarding the service time distribution that correspond to the posterior distributions given the signal observed. Therefore, through this service uncertainty model and signal mechanism we have a game that is in fact consistent with an assumption of homogeneous customers from a Bayesian point of view. It is important to point out that after receiving signal customers make a posterior update of the service time distribution, and therefore even if the original distribution was exponential the posterior distribution is no longer exponential. Therefore, the numerical procedure for computing the equilibrium for a general service time distribution in the discrete time setting is very useful.

In order to compute the optimal arrival strategies the customers must be aware of all of the system parameters, including the characteristics of customers with different beliefs and the quality of the signal they receive. Furthermore, the customers must be able to use the available information to make posterior updates regarding the sizes of both population types, knowing that some received a wrong signal, and their respective service time distributions. However, in some settings customers may not be aware of all of the relevant system parameters and of the quality of the signal. This leads us to explore several assumptions on the information available to customers and their level of rationality. In particular we consider both the equilibrium in a fully rational setting and also the case where customers do not make posterior update, be it due to lack of information regarding the parameters or due to limited computing ability. We refer to the latter as a bounded rationality assumption. Moreover, we introduce a dynamic learning agent-based model (ABM) where customers update their estimators for the expecting waiting time given their signal and actual delay in the system on previous days with the same signal. The learning procedure is a mixture between randomly testing new time slots (exploration) and choosing a time slot that was optimal in the past on average (exploitation) such that the probability of exploration decreases to zero with time.

The equilibrium in the discrete setting is numerically compared to the dynamic learning model. In equilibrium, for both the fully rational and bounded rationality cases, the customers with different types of beliefs arrive on mostly disjoint intervals. This observation is also backed up by the explicit solution of the fluid model. As the ABM does not assume customer knowledge of the system parameters, i.e., the arrival rates, service time distributions, true state probability $p$ and signal quality $q$, there is an inherent estimation bias of the expected waiting time. This is because on a proportion of $1-q$ of the days the signal is wrong and the delay observations are misclassified. However, the long-term outcome of the ABM appears to be much closer to the full information equilibrium solution than to the bounded rationality equilibrium solution. We further show that this bias generally makes the arrival distributions more spread out, which increases the average waiting times compared to the full information equilibrium, but to a lesser extent than the increased waiting time arising from the equilibrium solution that ignores the signal quality. In fact, in some cases social welfare is higher for the completely ignorant customers in the ABM than in the equilibrium solution when customers make the wrong assumptions (in the bounded rationality case).

\vspace{2mm}
\noindent {\bf Main contributions.} 
We now summarize the main contributions of our work.
\begin{itemize}
\item \textit{Heterogeneous customers:} Thus far, the literature on bottleneck queue arrival games has mostly assumed homogeneous customers. This paper is the first to analyze a game with a discrete (non-fluid) population and customers with heterogeneous beliefs regarding the service time distribution.
\item \textit{Asymmetric information:} We present a simple model for a bottleneck queue arrival game with service quality uncertainty and provide equilibrium analysis for both the Markovian and non-Markovian settings. Our framework further enables studying different levels of available information and customer rationality.
\item \textit{Discrete game with large time slots:} For the discrete game we provide a framework to analyze the equilibrium when the acceptance period is divided into big time slots such that multiple jobs can be served within a time slot. This is more general than discrete systems with small time slots that aim to approximate continuous time, and the resulting equilibrium distributions can be quite different. 
\item \textit{Bounded rationality and learning:} Our framework enables the comparison of equilibrium outcomes with various assumptions regarding the information availability and the level of rationality of the customers. The equilibrium arrival distributions are further compared with the outcome of a learning process in which customers update their decisions based on past experiences by estimating the expected waiting time corresponding to every pair of time choice and signal. This allows us to examine the effect of the biased estimation in the learning process as opposed to the limited and full information equilibrium solutions.
\end{itemize}

\vspace{2mm}
\noindent {\bf Background and related literature.}
The research on strategic queueing deals with many aspects of decision making in queueing systems. An introduction and overview of this research can be found in \cite{book_HH2003} and \cite{book_H2016}. The ?/M/1 model of Glazer and Hassin \cite{GH1983} was the first to consider an endogenous arrival process to a queue that is determined by strategic considerations of the arriving customers. The server was assumed to start working in time zero and customers choose their arrival time with the goal of minimizing expected waiting costs. It was assumed that customers can queue before the server commences operations, an assumption which was later removed in \cite{HK2011} that considered a finite acceptance period. The latter established that the equilibrium arrival distribution has an atom at zero, an interval with no arrivals and a continuous distribution on a continuous interval. The socially optimal arrival distribution was numerically analyzed and it was shown that it is optimal to allow arrivals on a discrete set of times. The model was later extended to include tardiness penalties in \cite{JS2013} and \cite{H2013}, order penalties in \cite{R2014}, a loss system in \cite{HR2015}, and a network of queues in \cite{HJ2015}. The issues of existence and uniqueness of equilibrium were addressed in detail in \cite{JS2013}. In \cite{B2017} the model was generalized to general service times, i.e., ?/G/1, and it was shown that the equilibrium has the same form and uniqueness properties, but is much harder to compute the arrival distribution. A fluid approximation for a game with heterogeneous customers that differ in their cost functions was presented in \cite{JS2018}. The discrete-time version of the ?/G/1 game was analyzed in \cite{SMF2019}, which also presented an algorithm to compute the equilibrium. This discrete-time setting can be used as an approximation for the continuous time model when taking a large number of small time slots. Moreover, \cite{SMF2019} presented the dynamic learning model which is extended here to allow multiple customer types. A more detailed review of the literature on arrival time games can be found in Chapter 4.1 of \cite{book_H2016}. 

Observable queues with customers deciding to join or balk based on noisy signals regarding the service speed and value were studied in \cite{VD2009} and \cite{DV2014}. A queueing game with inconsistent customer beliefs regarding the service rate of an M/M/1 system was studied in \cite{CV2016}. The authors considered the problem of joining or balking an observable queue when the service rate beliefs of customers are generated by a common continuous distribution; the optimal strategy is derived and shown to have a threshold form, where the threshold depends on the individual belief of the customer. In \cite{HC2015} a model with anecdotal reasoning of customers was introduced: before deciding whether to join an unobservable queue customers obtain a noisy signal in the form of a (single) past experience of an acquaintance who joined the queue. The equilibrium outcome was compared with a fully rational benchmark. The impact of uncertainty regarding server capacity on staffing and control of multiserver queues was investigated in \cite{I2018}. 

\vspace{2mm}
\noindent{\bf Paper organization.}
The remainder of the paper is organized as follows. Section \ref{sec:model} introduces the queueing model and game-theoretic framework. analyzes the continuous time game with exponential service times. In particular, it highlights the difficulty of solving the equilibrium dynamics even in the simplest Markovian setting which can be solved by standard methods in the case of homogeneous beliefs. In Section \ref{sec:fluid} we present the explicit equilibrium solution for a fluid model that can be viewed as an approximation of a large scale system with many small customers that have negligible service times. Section \ref{sec:discrete} defines the stochastic discrete-time game and provides a numerical method to compute the equilibrium distributions, with numerical analysis of the method given in \ref{sec:numerical}. Section \ref{sec:learn} presents the agent-based model and discusses the estimation bias associated with it. Furthermore, the latter also provides numerical analysis and comparison of the discrete game and agent-based model. Section \ref{sec:conclusion} concludes the paper with a discussion of the results and potential future research.

\section{Model and Preliminaries}\label{sec:model}

A population of $N\sim$Poisson$(\lambda)$ customers need to arrive at a queue during an acceptance period $\mathcal{T}\subseteq[0,T]$ such that $0\in\mathcal{T}$. Time $t=0$ is referred to as the {\em opening time}. The acceptance period may be a continuous interval $\mathcal{T}=[0,T]$ or a discrete collection of times $\mathcal{T}=\{0,t_1,\ldots,t_n\}$. All customers arriving during the acceptance period are served, even if service is completed after the acceptance period. The jobs in the queue are processed FCFS and if two, or more, jobs arrive at the same time then they are uniformly ordered at random. 

There are two types of customers $\mathcal{C}:=\{a,b\}$ that differ in their belief regarding the service time distribution $X$. Type $a$ customers are pessimistic and believe the service rate is ``slow" and type $b$ customers are optimistic and believe the service rate is ``fast". Further denote $\lambda_i:=\lambda \alpha_i$, where $\alpha_i\in(0,1)$ is the proportion of customers with belief $i\in\mathcal{C}$. We next provide a formal definition for a customer belief and the assumption that distinguishes the two types of service.   
\begin{definition}\label{def:belief}
A customer holds belief $i\in\mathcal{C}$ if his expected waiting time for any chosen arrival strategy is given by that of a queueing process with iid services times distributed as the random variable $X_i$ with cumulative distribution function (cdf, for short) $G_i$, together with the arrival process defined by the strategies of all customers.
\end{definition}
\begin{assumption}\label{assump:slow}
The expected service time corresponding to belief $a$ is higher than that corresponding to belief $b$; $\chi_a:=\E X_a>\E X_b=:\chi_b$.
\end{assumption}
\noindent{\em Examples:}
\begin{enumerate}
\item Exponential service times with a slower service rate for type $a$, $\mu_a< \mu_b$.
\item Deterministic service times: $\chi_a > \chi_b$.
\item Discrete-time memoryless service: geometric service times with success probabilities $\frac{1}{\chi_a}<\frac{1}{\chi_b}$.
\end{enumerate}

The reasoning for inconsistent beliefs is information asymmetry regarding the service capacity of the system. This may be due to different past experience. For example, some customers may be repeat customers that have an estimate of the capacity based on their experience while others may be joining for the first time and only rely on some common knowledge regarding the system. Another option is a system with random capacity where some customers acquire a noisy signal regarding the true system state. In Section \ref{sec:signals} a random environment and signal mechanism are introduced such that the system state, slow or fast, is random and the customers receive noisy signals regarding the true state. Throughout the paper we use the terms ``customer with belief $i$'' and ``type $i$ customer'' synonymously.

Customers wish to avoid waiting and choose their arrival time to the queue with the goal of minimizing expected waiting time. An arrival strategy for a customer is given by a probability distribution on $\mathcal{T}$ and can be represented by a cdf $F(t)$. We denote the support of an arrival strategy $F$ by $\sigma(F) := \inf\{ \mathcal{S} \subseteq \mathcal{T} s.t. \int_{s \in \mathcal{S}} F(ds) = 1 \}$. We focus our analysis on the symmetric (within types) case where all customers of type $i\in\mathcal{C}$ use the same strategy $F_i$. 

Let $A(t)$ denote the cumulative arrival process of customers to the system. Given an arrival profile $(F_a,F_b)$, the amount of customers arriving during an interval $(s,t]$ is a Poisson random variable,
\[
A(t)-A(s)\sim{\rm Poisson}(\lambda_a (F_a(t)-F_a(s))+\lambda_b (F_b(t)-F_b(s)))\ .
\]
This is verified by applying the Poisson splitting property for any interval $[s,t)$, which further implies that the increments on disjoint intervals are independent. Hence, if $F_a,F_b$ are continuous distributions then the arrival process is a nonhomogeneous Poisson process with cumulative rate $\lambda_a F_a(t)+\lambda_b F_b(t)$ (see, e.g., Chapter 2 of \cite{book_DVJ2003}). Observe that all customers assume the same arrival process, regardless of their individual belief, because it does not depend on the service times.

The assumption of a Poisson number of customers is convenient for several reasons. First of all, as explained above, it implies a Markovian (nonhomogeneous in time) arrival process. This makes the analysis of the queueing bottleneck game easier than the seemingly simple case of a deterministic number of arrivals, which is in fact as difficult to analyze as the case of a general random variable (see \cite{JS2013,HR2015}). Furthermore, this assumption is appropriate in a setting where each customer in a big population arrives with a small probability, due to the Poisson approximation of a Binomial random variable. Another attractive property of the Poisson number of players is that under standard axiomatic assumptions, for any tagged customer the number of other customers joining, conditional on the event that the tagged customer joins, is also Poisson random variable with the same mean (e.g., \cite{HM2012}).

Let $W_i(t)$ denote the waiting time for a type $i\in\mathcal{C}$ customer arriving at $t$. The distribution of the waiting time depends on the specific assumptions made on the service distribution, and we will consider several cases in this paper. Customers wish to arrive at a time $t$ that minimizes their expected waiting time
\begin{align*}
\label{eqn:}
 w_i(t;F_a,F_b):=\E[W_i(t) | \text{ Types $a$ and $b$ choose $F_a$ and $F_b$, respectively}]
\end{align*}
We are interested in stable arrival profiles $(F_a,F_b)$ such that no customer can decrease her/his expected waiting time by acting unilaterally.

\begin{definition}
\label{def:NE}
A symmetric (within types) Nash equilibrium is given by a pair of distributions $(F_a,F_b)$ such that for $i\in\mathcal{C}$,
\[
w_i(t;F_a,F_b)=w_i(s;F_a,F_b), \quad \forall s,t\in\sigma(F_i)\ , 
\]
and
\[
w_i(t;F_a,F_b)\leq w_i(s;F_a,F_b), \quad \forall t \in\sigma(F_i), s\in\mathcal{T} \ .
\]
\end{definition}

It is important to note that a customer with belief $i$ assumes that the service time distribution of \textbf{all} customers is $X_i$, regardless of their types (or beliefs). This is in sharp contrast to a system with two types of customers with different service distributions. Therefore, the expected waiting time $w_i(t;F_a,F_b)$ corresponds to a different queueing process for the two types of customers $\{a,b\}$. For example, in the continuous time setting, type $a$ customers believe the queue is an $M_t/G_a/1$ system while type $b$ customer believe it is an $M_t/G_b/1$ system. Furthermore, the ``true'' distribution of the service times (and queueing process) may be one of the two beliefs or neither, and it is not relevant for the equilibrium calculations as customers make their decisions according to their designated belief. It is further assumed that customers are aware of the information asymmetry and thus of the presence of customers with a belief that is inconsistent with their own.

\subsection{Signal mechanism}\label{sec:signals}

This section illustrates how a system with random environment and noisy signals can lead to inconsistent beliefs regarding the service speed, even if the customers are otherwise homogeneous. On a given day, the server operates in mode $a$ with probability $p$, otherwise, it operates in mode $b$ with probability $1-p$. Let $M$ denote the service mode, i.e., its probability distribution is given by
\begin{align*} 
\P(M=a) = p, \quad \P(M=b) = 1-p.
\end{align*}
The service time in mode $i$ ($i \in \calC = \{a,b\}$) is denoted by a random variable $X_{i}$ with mean $\chi_{i}$. As before, we assume that $\chi_{a} > \chi_{b}$, i.e., the service time in mode $a$ is slower than one in mode $b$. For every customer $k$ let $S^{(k)} \in \{0,1\}$ be a Bernoulli random variable denoting whether the true mode is observed or not ($1$ indicating the true mode). We assume that $S^{(k)}$'s are iid with $\P(S^{(k)}=1) = q$, where $q\in(\frac{1}{2},1]$, and that they are also independent of $M$. Let $Y^{(k)}$ denote the signal received by customer $k$, then
\begin{align*} 
Y^{(k)}=M S^{(k)}+(a+b-M)(1-S^{(k)})\ .
\end{align*}
The marginal probability for customer $k$ to receive a type $i\in\mathcal{C}$ signal is
\begin{equation*}\label{eq:prob_type}
\P(Y^{(k)}=i)=\left\lbrace \begin{array}{cc}
pq+(1-p)(1-q),& i =a\ , \\
p(1-q)+(1-p)q ,& i =b\ .
\end{array}\right.
\end{equation*}
Let $\alpha_{ji}:= \P(Y^{(h)}=j|Y^{(k)}=i)$ denote the conditional probability that any other customer $h$ ($h \not= k$) has a signal $Y^{(h)}=j$ given that customer $k$ received signal $i$ for $(i,j)\in\mathcal{C}^2$. Then we have
\[
\alpha_{ji} = 
\frac{\P(\left\lbrace M S^{(h)}+(a+b-M)(1-S^{(h)})=j\right\rbrace\cap\left\lbrace \ M S^{(k)}+(a+b-M)(1-S^{(k)})=i\right\rbrace )}{\P( M S^{(k)}+(a+b-M)(1-S^{(k)})=i)}\ ,
\]
and therefore,
\begin{equation}
\label{eqn:Bias_tagged_class_a}
\alpha_{ja}=\left\lbrace \begin{array}{cc}
\frac{pq^2+(1-p)(1-q)^2}{pq+(1-p)(1-q)},& j =a \ , \\
\frac{pq(1-q)+(1-p)(1-q)q}{pq+(1-p)(1-q)} ,& j =b \ ,
\end{array}\right.
\end{equation}
and
\begin{equation}
\label{eqn:Bias_tagged_class_b}
\alpha_{jb}=\left\lbrace \begin{array}{cc}
\frac{p(1-q)q+(1-p)q(1-q)}{p(1-q)+(1-p)q},& j =a \ , \\
\frac{p(1-q)^2+(1-p)q^2}{p(1-q)+(1-p)q} ,& j =b \ .
\end{array}\right.
\end{equation}
The total number of customers is a Poisson random variable with mean $\lambda$. Therefore, for customer $k$ with signal $Y^{(k)}=i\in\mathcal{C}$ the number of customers with the same signal is Poisson with mean $\lambda\alpha_{ii}$ and the number of customers with the opposite signal $j\neq i$, is $\lambda\alpha_{ji}$. 
The mean population size seen by customer $k$ with signal $i \in \calC$ is given by $\vc{\nu}_{i} :=  (\lambda\alpha_{ai}, \lambda\alpha_{bi})$.

The posterior service time distribution of customer $k$ with signal $Y^{(k)}=i \in \calC$ is given by the mixture $\tilde{X}_i^{(k)} = X_j$ with probability $\eta_{ji} := \P(M=j | Y^{(k)} = i)$ for $j \in \calC$, where $\eta_{ji}$ is given by
\[
(\eta_{aa}, \eta_{ba}) := \frac{(pq, (1-p)(1-q))}{pq + (1-p)(1-q)}, \quad 
(\eta_{ab}, \eta_{bb}) := \frac{(p(1-q), (1-p)q)}{p(1-q) + (1-p)q}\ .
\]
Moreover, the posterior expected service time of customer $k$ is
\begin{equation}
\label{eqn:Biased_service_time_class_i}
\zeta_i=\E[\tilde{X}_i^{(k)}]=\E[X_{M}|Y^{(k)}=i]=\left\lbrace \begin{array}{cc}
\frac{\chi_a pq+\chi_b (1-p)(1-q)}{ pq+(1-p)(1-q)},& i =a \ , \\
\frac{\chi_a p(1-q)+\chi_b (1-p)q}{ p(1-q)+ (1-p)q} ,& i =b \ .
\end{array}\right.
\end{equation}
Let $\tilde{X}_a$ and $\tilde{X}_b$ denote generic random variables for $\tilde{X}_a^{(k)}$ and $\tilde{X}_b^{(k)}$, respectively. The signal mechanism results in two populations of customers with distinct beliefs on both service time and mean population sizes given by $(\tilde{X}_a, \vc{\nu}_{a})$ and $(\tilde{X}_b, \vc{\nu}_{b})$.

\subsection{Beliefs, information and bounded rationality}\label{sec:belief}

In the prequel it was assumed that customers can fully compute all of the above posterior distributions, and implicitly that they know $p$ and $q$. If the only information available to them is that $q>\frac{1}{2}$ then $\zeta_i=\chi_i$ is a reasonable assumption. However, there needs to be additional information or some prior belief in order to determine $\lambda_a$ and $\lambda_b$. In other cases the customers may have no knowledge about the uncertainty mechanism and simply have prior beliefs according to their past experience or the anecdotal experiences of other customers.

Throughout the paper we consider several different assumptions of customer rationality and knowledge.
\begin{enumerate}
\item[(FR)] \textit{Fully informed rational customers:} Customers know all of the system parameters and can compute the respective posterior distributions $(\tilde{X}_a,\tilde{X}_b)$. For a customer with signal $a$ (resp. $b$), the posterior service-time distribution is given by $\tilde{X}_a$ (resp. $\tilde{X}_b$), and the mean size of populations is given by $\vc{\nu}_{a}$ (resp. $\vc{\nu}_{b}$).
\item[(BR)] \textit{Customers with bounded rationality or limited information:} Customers either do not know $p$ or $q$, or are unable to make the posterior distribution computations. Customers trust their signal (assuming $q>\frac{1}{2}$). The service times of the beliefs are given by $(X_a,X_b)$ with means $(\chi_a,\chi_b)$. Customers further have prior beliefs regarding the population sizes $\lambda_a$ and $\lambda_b$ which may, or may not, correspond the posterior service rates computed in the prequel.
\item[(AM)] \textit{Agent-based model}: Customers are not maximizing expected utility but rather optimize their behavior by learning the system behavior from repeated experiences. Every day they receive a signal regarding the server state and make an arrival time choice. After every day customers update their estimated delay for every time period given the signal obtained.
\end{enumerate}

Note that (FR) and (BR) both yield a Nash equilibrium, even though the customers may not be fully rational. Assumption (AM), however, does not assume any kind of equilibrium outcome. Section \ref{sec:exp} considers a fully Markovian model which corresponds to (BR): every type of customer has a different belief regarding the rate of the exponential service times. A deterministic fluid approximation of the system is presented in Section \ref{sec:fluid}. The fluid model can be used to approximate either (FR) or (BR) and yields an explicit solution for any given parameters. The discrete-time model of Section \ref{sec:discrete} enables numerical analysis of both (FR) and (BR) in a stochastic setting. In our numerical analysis of the discrete time model we consider several examples with service times that follow deterministic, geometric, and a mixture of geometric distributions; where the first two cases correspond to (FR) and the latter to (BR) (for a Markovian system with discrete geometric service times). A model building on Assumption (AM) is introduced in Section \ref{sec:learn}. The outcomes of the different models are compared numerically in Sections \ref{sec:numerical}, \ref{sec:NE_with_bias_vs_ABM} and \ref{sec:NE_no_bias_vs_ABM}.

\section{Exponential service times}\label{sec:exp}

This section discusses the properties and dynamics of the symmetric (within type) Nash equilibrium arrival profiles for the case of exponential service times and a continuous acceptance period $[0,T]$. Before proceeding to the characterization of the equilibrium arrival profile we first review the properties of the homogeneous beliefs model. 

\subsection*{Homogeneous beliefs}

Suppose for now that all customers share the common belief that the exponential parameter of the service time distribution is $\mu$. The number of customers is a Poisson random variable with mean $\lambda$. In this case the game reduces to the ?/M/1 queue of \cite{GH1983,HK2011}. Let $F$ denote the symmetric arrival cdf and consider a tagged customer that wants to choose an arrival time that minimizes their expected delay. Denote the total number of arrivals until time $t$ by $A_F(t)$. As explained in Section \ref{sec:model}, due to the Poisson splitting property, we have that $A_F(t)\sim$Poisson$(\lambda F(t))$. Further denote by $Q_F(t)$ the queue length that a customer arriving at time $t$ faces, then (e.g., \cite{JS2013}),
\begin{equation}\label{eq:queue_exp1}
q(t;F):=\E[Q_F(t)]=\lambda F(t-)+\frac{1}{2}\lambda (F(t)-F(t-))-\mu\int_0^t\P(Q_F(u)>0)\diff u \ ,
\end{equation}
where $g(t-):=\lim_{s\uparrow t}g(s)$ for any function $g$. The first term is the number of expected arrivals up to time $t$, the second term is the proportion of the new additional customers that also arrived at time $t$ and are admitted before the tagged customer (recall that customers arriving together are ordered randomly), and the third term corresponds to the expected number of departures. By the memoryless property of the exponential service times, the expected waiting time of a tagged customer arriving at $t$ is
\[
w(t;F)=\frac{1}{\mu}q(t;F)\ .
\]
Minimizing expected waiting time is therefore equivalent to minimizing expected queue length. Definition \ref{def:NE} of the Nash equilibrium states that an arrival strategy $F$ is a symmetric equilibrium if and only if the expected delay is constant on all of the support $F$ and higher outside of the support $\sigma(F)$. In other words, if $F$ is an equilibrium arrival distribution then there exists some constant $C>0$ such that $q(t;F)=C$ for any time $t$ that is chosen with positive probability. We next outline how such a solution can be constructed.

Clearly, $F$ has an atom at time $t=0$; $F(0)>0$. Otherwise, $q(0;F)=0$ and arriving at $t=0$ is the best response. Given $F(0)$ the expected queue length at time zero is
\begin{equation}\label{eq:q0_exp1}
q_0:=q(0;F)=\frac{\lambda F(0)}{2}\ .
\end{equation}
Note that here we have used the assumption that customers arriving together are randomly ordered and therefore half of those arriving at zero will be in front of the tagged customer if he joins at time zero as well. By Definition \ref{def:NE}, in equilibrium $q(t;F)=q(0;F)$ for any $t\in\sigma(F)$. After the opening at $t=0$ there is an interval $(0,t_e)$ with no arrivals, i.e., $F(t_e)=F(0)$, because \eqref{eq:queue_exp1} has an upward discontinuity at zero. Intuitively, arriving immediately after a mass of arrivals cannot be a best response. Moreover, there cannot be an additional atom in any $t\in(0,T]$ because arriving just before the atom yields a lower expected queue length; $q(t-;F)<q(t;F)$. Therefore, the remainder of the distribution is continuous, with density $f$, on the interval $[t_e,T]$, such that $q(t;F)=q(0;F)$ for all $t\in[t_e,T]$ and $F(0)+\int_{t_e}^Tf(t)\diff t=1$. The equilibrium density is given by the unique solution to the functional differential equation (obtained by taking derivative of \eqref{eq:queue_exp1}),
\begin{equation}\label{eq:f1}
\lambda f(t)=\mu\P(Q_F(t)>0)\ .
\end{equation}
There is no explicit solution to \eqref{eq:f1} because the transient probability $\P(Q_F(t)>0)$ does not admit a closed form solution due to the elaborate dependence on the arrival strategy $F$. Nevertheless, the solution is unique and can be computed by solving the Kolmogorov backward equations of the Markovian queue length process. We refer interested readers to \cite{GH1983,HK2011,JS2013} for more details on this model. To summarize, the equilibrium arrival distribution is given by an atom at zero $F(0)$ and a continuous arrival density $f(t)$ on $[t_e,T]$ for some $t_e\in[0,T]$. Note that $F(0)=1$ and $t_e=T$ are possible if $\lambda$ is very big relatively to $\mu T$.

\subsection*{Heterogeneous beliefs game}

Moving on to the case of customers with heterogeneous beliefs regarding the service rate. The beliefs of customers of type $i\in\mathcal{C}$ corresponds to an exponential random variable with rate $\mu_i$ such that $\mu_b>\mu_a$. A symmetric equilibrium is now given by a pair of arrival distributions $(F_a,F_b)$ simultaneously satisfying the equilibrium conditions. Each service time belief corresponds to a different queueing process. In particular, let $Q_{i}$ denote the queue length process corresponding to belief $i\in\mathcal{C}$, while keeping in mind that this process is a function of the arrival distribution $(F_a,F_b)$, then the expected queue length of \eqref{eq:queue_exp1} is adapted to 
\begin{equation}\label{eq:queue_exp}
q_i(t):=\E[Q_i(t)] = \sum_{j\in\mathcal{C}}\lambda_j F_j(t-)+\frac{1}{2}\sum_{j\in\mathcal{C}}\lambda_j(F_j(t)-F_j(t-))-\mu_i\int_0^t\P(Q_{i}(u)>0)\diff u \ .
\end{equation}
As before, the equilibrium condition for a type $i$ customer is that there exists some constant $C_i>0$ such that $q_i(t)=C_i$ for all $t\in\sigma(F_i)$. However, we now seek a pair $(F_a,F_b)$ that simultaneously solves the condition for both types. First observe that the expected queue length at time $t=0$ is identical for both types because service has not started yet,
\begin{equation}\label{eq:q0_exp}
q_0:=q_a(0)=q_b(0)=\frac{\lambda_a F_a(0)+\lambda_b F_b(0)}{2}\ .
\end{equation}
For the same reasons as before, the equilibrium solutions have the following properties: there is a positive atom at time $t=0$ for at least one of the equilibrium arrival distributions, i.e., $F_a(0)=F_b(0)=0$ is not possible, arriving just after $t=0$ is not optimal for both type of customers and there are no additional atoms during $t\in(0,T]$. Hence, for each type $i$ with $F_{i}(0) > 0$ there exists a time $t_i\in(0,T)$ such that $F_i(t_i)=F_i(0)$ and $F_i$ is continuous on $(t_i,T]$. The first important difference from the homogeneous case is that $F_i(0)=0$ for $i\in\mathcal{C}$ implies $q_i(t)\leq q_0$ for any $t\in\sigma(F_i)$. In other words it is possible that one type of customers arrive continuously during the service period with no mass at the opening (but this is not possible for both types simultaneously).
 
The dynamic equations for the continuous part of the of the equilibrium distributions are as follows. If only the arrival density of type $a$ customers is positive at time $t\in(0,T]$; $f_a(t)>0=f_b(t)$ then taking derivative of \eqref{eq:queue_exp} yields 
\begin{equation}\label{eq:f_a_only}
\lambda_a f_a(t)=\mu_a\P(Q_a(t)>0)\ .
\end{equation}
Similarly, if only the arrival density of type $b$ customers is positive at time $t\in(0,T]$; $f_b(t)>0=f_a(0)$, then
\begin{equation}\label{eq:f_b_only}
\lambda_b f_b(t)=\mu_b\P(Q_b(t)>0)\ ,
\end{equation}
However, when both types of customers arrive simultaneously; $f_a(t),f_b(t)>0$, then the equilibrium condition is given by
\begin{equation}\label{eq:f_i_only}
\begin{split}
\lambda_a f_a(t)+\lambda_b f_b(t) &= \mu_a\P(Q_a(t)>0)\ , \\
\lambda_a f_a(t)+\lambda_b f_b(t) &= \mu_b\P(Q_b(t)>0)\ .
\end{split}
\end{equation}
Observe that \eqref{eq:f_i_only} in fact generalizes \eqref{eq:f_a_only} and \eqref{eq:f_b_only} by setting one of the densities to zero in either case. The Markov chain corresponding to the queue length process $Q_i(t)$ has the same nonhomogeneous arrival rate $\lambda_a f_a(t)+\lambda_b f_b(t)$ for both $i=a,b$, but the output rate $\mu_i$ is different for each type. The transient idle probabilities can as before be numerically evaluated by solving the Kolmogorov backward equations.

We next state two lemmas that will be used to characterize the equilibrium arrival profile. The proof of Lemma \ref{lemma:queue_order} relies on a coupling argument for the virtual workload which is detailed in Appendix \ref{sec:appn_A}. 
\begin{lemma}\label{lemma:queue_order}
For any arrival profile $(F_a,F_b)$ such that $F_i(0)>0$ for at least one of $i\in\mathcal{C}$ the expected queue length faced by type $a$ customers is higher than that faced by type $b$ customers: $q_a(t) > q_b(t)$ for all $t>0$.
\end{lemma}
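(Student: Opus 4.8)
The plan is to exploit the fact, emphasized in Section~\ref{sec:model}, that the two queue-length processes $Q_a$ and $Q_b$ are driven by one and the same arrival process and differ only through the service rate, with $\mu_a<\mu_b$. I would first reduce the statement to a comparison of departure counts. Writing $D_i(t)$ for the number of type-$i$ service completions in $[0,t]$, the departure term in \eqref{eq:queue_exp} is exactly its compensator, $\E[D_i(t)]=\mu_i\int_0^t\P(Q_i(u)>0)\diff u$, while the two arrival terms in \eqref{eq:queue_exp} are identical for $i=a$ and $i=b$. Hence
\[
q_a(t)-q_b(t)=\E[D_b(t)]-\E[D_a(t)],
\]
and it suffices to prove that the slower server produces strictly fewer departures in expectation, $\E[D_a(t)]<\E[D_b(t)]$ for every $t>0$.

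I would obtain this from a pathwise coupling on a common probability space (the ``virtual workload'' coupling of Appendix~\ref{sec:appn_A}). Fix the common arrival epochs $\tau_1\le\tau_2\le\cdots$ together with the common random ordering of any customers that arrive simultaneously at the atom $t=0$. Attach to customer $k$ a single uniform random variable $U_k$ on $(0,1)$ and set the service requirement under belief $i$ to $X_i^{(k)}=-\mu_i^{-1}\ln U_k$; then $X_i^{(k)}\sim\mathrm{Exp}(\mu_i)$ as required, and $X_a^{(k)}=(\mu_b/\mu_a)X_b^{(k)}\ge X_b^{(k)}$ for every $k$. Under FCFS the departure epochs obey the Lindley recursion $d_i^{(k)}=\max(\tau_k,d_i^{(k-1)})+X_i^{(k)}$, which is nondecreasing in both $d_i^{(k-1)}$ and $X_i^{(k)}$; a straightforward induction then gives $d_a^{(k)}\ge d_b^{(k)}$ for all $k$. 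Consequently $D_a(t)=|\{k:d_a^{(k)}\le t\}|\le D_b(t)$ and $Q_a(t)\ge Q_b(t)$ hold pathwise, so in particular $\E[D_a(t)]\le\E[D_b(t)]$.

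It remains to upgrade this to a strict inequality, and this is where the hypothesis $F_i(0)>0$ for some $i\in\calC$ enters. Because the batch arriving at the opening is Poisson with mean $\lambda_aF_a(0)+\lambda_bF_b(0)>0$, the event that at least one customer arrives at $t=0$ has positive probability. Given any $t>0$ I would then isolate the positive-probability event on which exactly one customer arrives, at time $0$, and its service requirements satisfy $X_b^{(1)}<t<X_a^{(1)}$ (equivalently $(\mu_a/\mu_b)t<X_b^{(1)}<t$, which has positive probability since $\mu_a<\mu_b$). On this event the lone customer has already left the fast queue but not the slow one, so $Q_b(t)=0<1=Q_a(t)$. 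Together with the pathwise domination $Q_a(t)\ge Q_b(t)$ this yields $\E[D_a(t)]<\E[D_b(t)]$, that is $q_a(t)>q_b(t)$, for every $t>0$.

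The main obstacle is conceptual rather than computational: workload (equivalently, waiting-time) domination alone is \emph{not} enough, since $\E[V_a(t)]\ge\E[V_b(t)]$ only gives $q_a(t)/\mu_a\ge q_b(t)/\mu_b$, which is consistent with $q_a(t)<q_b(t)$ when $\mu_a<\mu_b$. One must therefore compare the processes at the level of departure \emph{counts} (via the Lindley recursion above, or equivalently via a uniformization coupling of the two queue-length chains in which $b$ serves at rate $\mu_b$ and $a$ at a thinned rate $\mu_a$), and then supply a dedicated positive-probability event, enabled precisely by the atom at the opening, to make the inequality strict for all $t>0$.
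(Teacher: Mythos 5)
Your proof is correct and follows essentially the same route as the paper's Appendix~\ref{sec:appn_A}: a pathwise coupling on common arrival epochs with $X_a^{(k)}=(\mu_b/\mu_a)X_b^{(k)}$ built from a shared uniform, yielding $Q_a(t)\ge Q_b(t)$ sample-path-wise. Your formalization via the Lindley recursion on departure epochs, and the explicit positive-probability event (a lone opening arrival with $(\mu_a/\mu_b)t<X_b^{(1)}<t$) used to make the inequality strict, are in fact tidier than the paper's busy-period and positive-measure arguments, but they do not constitute a different method.
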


\begin{lemma}\label{lemma:F_order}
For any Nash equilibrium $(F_a,F_b)$, if $F_b(0)>0$ then $F_a(0)=1$.
\end{lemma}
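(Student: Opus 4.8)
The plan is to show that, under the hypothesis $F_b(0)>0$, the expected queue length $q_a$ faced by type $a$ customers is \emph{strictly} minimized at the opening $t=0$ over the whole acceptance period $[0,T]$, so that the equilibrium support $\sigma(F_a)$ collapses to the single point $\{0\}$, which is exactly the conclusion $F_a(0)=1$.

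First I would extract what the equilibrium condition (Definition \ref{def:NE}) says about type $b$. Since $F_b(0)>0$ gives $0\in\sigma(F_b)$, and since the expected waiting time $w_b=q_b/\mu_b$ is minimized on the support of $F_b$, the value $C_b:=q_b(0)$ is the global minimum of $q_b$ over $[0,T]$; in particular $q_b(t)\ge q_b(0)$ for every $t\in[0,T]$. By \eqref{eq:q0_exp} the queue length at the opening is common to both types, $q_b(0)=q_0=q_a(0)$, so this reads $q_b(t)\ge q_0$ for all $t$.

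Next I would invoke Lemma \ref{lemma:queue_order}. The hypothesis $F_b(0)>0$ certainly guarantees that $F_i(0)>0$ for at least one $i\in\mathcal{C}$, so the lemma applies and yields the strict ordering $q_a(t)>q_b(t)$ for all $t>0$. Chaining the two facts gives $q_a(t)>q_b(t)\ge q_0=q_a(0)$ for every $t>0$, i.e. $t=0$ is the unique minimizer of $q_a$ on $[0,T]$. Applying the equilibrium condition now to type $a$, its support must be contained in the set of minimizers of $w_a=q_a/\mu_a$, equivalently of $q_a$; since that set is exactly $\{0\}$ and $\sigma(F_a)$ is nonempty, we conclude $\sigma(F_a)=\{0\}$ and hence $F_a(0)=1$.

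The argument is short, and I expect the only delicate point to be the \emph{strictness} of the inequality furnished by Lemma \ref{lemma:queue_order}: it is precisely $q_a(t)>q_b(t)$ (rather than merely $q_a\ge q_b$) that prevents type $a$ from matching the minimal queue length at any positive time and thereby forces all of its mass to the opening. I would therefore be careful to confirm that the hypotheses of Lemma \ref{lemma:queue_order} are genuinely satisfied here, which they are, since $F_b(0)>0$ already supplies the required atom at the opening for at least one type.
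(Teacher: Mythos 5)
Your proof is correct and follows essentially the same route as the paper's: use the equilibrium condition for type $b$ (with $0\in\sigma(F_b)$) to get $q_b(t)\ge q_0$ for all $t$, then invoke Lemma \ref{lemma:queue_order} to conclude $q_a(t)>q_b(t)\ge q_0=q_a(0)$ for $t>0$, forcing all type $a$ mass to the opening. Your remark on the necessity of the strict inequality from Lemma \ref{lemma:queue_order} is apt, and the hypotheses of that lemma are indeed satisfied here.
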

\begin{proof}	
If $F_b(0)>0$ then the equilibrium conditions state that $q_b(t)\geq q_0$ for all $t>0$, which implies that $q_a(t) > q_0$ since $q_a(t) > q_b(t)$ from Lemma \ref{lemma:queue_order}. Therefore, $F_a(dt)>0$ (i.e., $t\in\sigma(F_a)$) for some $t > 0$ contradicts the equilibrium assumption because $q_a(t) > q_0$, and we conclude that $F_a(0) = 1$.
\end{proof}
The previous two lemmas leave us with two following possibilities for Nash equilibrium arrival profiles that are summarized in the following theorem.
\begin{theorem}\label{thm:NE}
If $(F_a,F_b)$ are Nash equilibrium arrival strategies then only one of the following holds:
\begin{enumerate}
\item[(i)] $F_a(0)=1$, $F_b(0)>0$ and $\int_{t_b}^T f_b(t)\diff t=1-F_b(t)$, where $f_b(t)$ is the unique solution to \eqref{eq:f_b_only}.
\item[(ii)] $F_a(0)\in(0,1]$, $F_b(0)=0$ and there exist two arrival sets $\mathcal{T}_a,\mathcal{T}_b\subseteq(0,T]$ such that $\int_{t\in\mathcal{T}_a} f_a(t)\diff t=1-F_a(0)$ and $\int_{t\in\mathcal{T}_b} f_b(t)\diff t=1$, where $(f_a(t),f_b(t))$ satisfy \eqref{eq:f_a_only}, \eqref{eq:f_b_only} and \eqref{eq:f_i_only}. Note that $\mathcal{T}_a=\emptyset$ if $F_a(0)=1$.
\end{enumerate}
\end{theorem}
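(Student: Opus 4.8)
The plan is to show that the two lemmas already stated exhaust the structural possibilities, so that any equilibrium profile $(F_a,F_b)$ must fall into exactly one of the two cases. The natural organizing principle is a case split on the value of $F_b(0)$, since Lemma \ref{lemma:F_order} links $F_b(0)>0$ directly to $F_a(0)=1$. First I would recall the three general facts established in the preceding discussion that hold for \emph{every} equilibrium: (a) at least one of $F_a(0),F_b(0)$ is strictly positive (else arriving at $0$ strictly dominates for both types, since $q_i(0)=0$); (b) arriving immediately after the opening atom is never optimal, so each distribution with a positive atom at $0$ has a no-arrival gap $(0,t_i)$; and (c) there are no further atoms on $(0,T]$, so the remainder of each $F_i$ is continuous with density $f_i$. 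These facts tell us the skeleton of any equilibrium is ``atom at $0$, gap, continuous tail,'' and the only freedom is which types place mass at $0$.

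I would then carry out the dichotomy as follows. Case $F_b(0)>0$: Lemma \ref{lemma:F_order} immediately forces $F_a(0)=1$, which is exactly the hypothesis of case (i). It remains to argue that on the support of $F_b$'s continuous part the relevant dynamic equation is \eqref{eq:f_b_only} rather than the joint system \eqref{eq:f_i_only}: since $F_a(0)=1$, type $a$ contributes no arrivals after time $0$, so $f_a(t)=0$ for $t>0$, and the equilibrium condition $q_b(t)=q_0$ on $\sigma(F_b)$ differentiates to exactly \eqref{eq:f_b_only} with $\lambda_a f_a(t)=0$. The normalization $\int_{t_b}^T f_b(t)\diff t = 1-F_b(t)$ then records that $F_b$'s total mass is one. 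Case $F_b(0)=0$: by fact (a) we must have $F_a(0)>0$, i.e.\ $F_a(0)\in(0,1]$, which is the hypothesis of case (ii). Here type $b$ arrives entirely on $(0,T]$ and type $a$ may also have a continuous portion, so on the respective supports the densities solve whichever of \eqref{eq:f_a_only}, \eqref{eq:f_b_only}, \eqref{eq:f_i_only} applies depending on whether one or both densities are active at $t$; the degenerate subcase $F_a(0)=1$ gives $\mathcal{T}_a=\emptyset$.

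The last thing to verify is that the two cases are genuinely exclusive and exhaustive, giving the ``only one of the following holds'' phrasing. Exhaustiveness follows because the split on $F_b(0)=0$ versus $F_b(0)>0$ is a true dichotomy and fact (a) rules out the remaining corner $F_a(0)=F_b(0)=0$. Exclusivity follows because case (i) requires $F_b(0)>0$ while case (ii) requires $F_b(0)=0$, and these cannot hold together.

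The step I expect to be the real obstacle is the clean justification that within case (ii) the correct dynamic equation is selected pointwise by which densities are positive — i.e.\ that \eqref{eq:f_i_only} reduces to \eqref{eq:f_a_only} or \eqref{eq:f_b_only} on the regions where only one type is active. This requires arguing that differentiating the equilibrium identity $q_i(t)=C_i$ on $\sigma(F_i)$ yields the stated functional differential equations even when the supports $\mathcal{T}_a,\mathcal{T}_b$ overlap only partially; the subtlety is that both identities must hold simultaneously on any common portion of the supports, which is precisely the content of the paired system \eqref{eq:f_i_only}. Everything else is bookkeeping built directly on the two lemmas and the three structural facts, so I would keep the proof short and lean heavily on Lemma \ref{lemma:F_order} to dispatch case (i) in a single line.
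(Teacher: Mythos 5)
Your proposal is correct and follows essentially the same route as the paper: the paper presents Theorem \ref{thm:NE} as a direct summary of the preceding structural facts (atom at $0$ for at least one type, no interior atoms, the gap after the opening) together with Lemma \ref{lemma:queue_order} and Lemma \ref{lemma:F_order}, which is exactly the dichotomy on $F_b(0)$ you carry out. The pointwise selection among \eqref{eq:f_a_only}, \eqref{eq:f_b_only} and \eqref{eq:f_i_only} that you flag as the delicate step is likewise handled in the paper only at the level you describe, namely by noting that \eqref{eq:f_i_only} generalizes the other two by setting one density to zero.
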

We conjecture that the second type of equilibrium in fact has a much more specific form, namely that excluding $t=0$ customers of different types arrive on disjoint intervals. This conclusion relies on the following conjecture regarding the equilibrium dynamics in the interior of the acceptance period.
\begin{conjecture}\label{conj:disjoint_intervals}
There is no time $t>0$ such that both types of customers arrive simultaneously; $\sigma(F_a)\cap\sigma(F_b)\cap(0,T]=\emptyset$.
\end{conjecture}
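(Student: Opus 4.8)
The plan is to argue by contradiction. Suppose there is a nondegenerate interval $I=(t_1,t_2)\subseteq(0,T]$ on which both arrival densities are strictly positive, $f_a,f_b>0$. On $I$ both equilibrium conditions bind, so $q_a\equiv C_a$ and $q_b\equiv C_b$ are constant, and differentiating \eqref{eq:queue_exp} gives the pair \eqref{eq:f_i_only}. Subtracting the two equations in \eqref{eq:f_i_only} forces the pointwise \emph{matching condition}
\[
\mu_a\,\P(Q_a(t)>0)=\mu_b\,\P(Q_b(t)>0)=:\Lambda(t),\qquad t\in I,
\]
where $\Lambda(t)=\lambda_a f_a(t)+\lambda_b f_b(t)$ is the common arrival rate feeding both perceived queues. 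The whole question thus reduces to a purely analytic one about a single $M_t/M/1$ arrival profile: can one rate function $\Lambda$ simultaneously keep the transient means of two $M_t/M/1$ queues — identical arrivals, service rates $\mu_a<\mu_b$ — constant on an interval?

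First I would record the structural ingredients. From Lemma \ref{lemma:queue_order} (and the coupling of Appendix \ref{sec:appn_A}) the two queues can be built on a common arrival stream with coupled service completions, yielding the pathwise domination $Q_a(t)\ge Q_b(t)$; I would try to strengthen this to a stochastic, ideally likelihood-ratio, ordering of the transient laws so that the boundary probabilities $\P(Q_i(t)=0)$ and $\P(Q_i(t)=1)$ become comparable on $I$. Next, differentiating the matching condition and substituting the Kolmogorov balance equation $\tfrac{d}{dt}\P(Q_i(t)=0)=-\Lambda\,\P(Q_i(t)=0)+\mu_i\,\P(Q_i(t)=1)$ together with $\Lambda=\mu_i\,\P(Q_i(t)>0)$ gives the relation
\[
(\mu_a-\mu_b)\,\Lambda(t)=\mu_a^2\,\P(Q_a(t)=1)-\mu_b^2\,\P(Q_b(t)=1),\qquad t\in I .
\]
The aim is to iterate: each further differentiation brings in the next boundary probability $\P(Q_i(t)=n)$, producing an infinite hierarchy of identities that any admissible $\Lambda$ must satisfy throughout $I$.

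The endgame I envisage is to show this hierarchy is over-determined. Intuitively, the single scalar degree of freedom $\Lambda(t)$ cannot keep \emph{both} transient means flat: holding $\E Q_a$ constant already pins $\Lambda=\mu_a\,\P(Q_a(t)>0)$, and feeding that same $\Lambda$ into the faster queue cannot simultaneously hold $\E Q_b$ constant except in the degenerate case where both queues sit in their exact steady states. I would make this precise by analysing the left endpoint $t_1$: by continuity of the transient laws, the distributions of $Q_a(t_1)$ and $Q_b(t_1)$ are inherited from the region immediately preceding $I$ (an atom at $0$, a single-type arrival interval, or a no-arrival gap), none of which can leave \emph{both} queues in the precise quasi-stationary configuration the matching condition demands just inside $I$. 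This should contradict constancy of one of the two means on the right of $t_1$.

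The main obstacle is closing this rigorously. The transient probabilities $\P(Q_i(t)=n)$ admit no closed form — exactly the difficulty flagged after \eqref{eq:f1} — so the hierarchy cannot be summed explicitly, and ruling out a finely tuned $\Lambda(t)$ that balances the two constancy constraints seems to require a genuine invariant: either a monotonicity or convexity property of the ratio $\P(Q_a(t)>0)/\P(Q_b(t)>0)$ that can be signed through the coupling, or a comparison argument at the level of the generating functions $P_i(z,t)=\sum_{n}\P(Q_i(t)=n)\,z^{n}$ showing the two "constant mean" PDE constraints are incompatible. Note in particular that the pathwise order $Q_a\ge Q_b$ does not by itself sign $\mu_a^2\,\P(Q_a=1)-\mu_b^2\,\P(Q_b=1)$, so the coupling must be pushed further. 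Establishing such an invariant, and thereby converting the heuristic over-determination into a contradiction, is the crux — and is presumably why the statement is posed here as a conjecture rather than proved as a theorem.
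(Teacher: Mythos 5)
This statement is posed in the paper as a conjecture, not a theorem: the authors give only the heuristic that on a common arrival interval both functional differential equations in \eqref{eq:f_i_only} would have to be satisfied by the same total rate $\Lambda(t)=\lambda_a f_a(t)+\lambda_b f_b(t)$, and they explicitly state they were unable to rule out a pair of initial distributions and rates $\mu_a<\mu_b$ for which the two unique solutions coincide on an interval. Your reduction to the matching condition $\mu_a\P(Q_a(t)>0)=\mu_b\P(Q_b(t)>0)$ is exactly the paper's reasoning, and the obstruction you identify at the end --- no tractable handle on the transient probabilities with which to sign the discrepancy and derive a contradiction --- is precisely the gap the authors acknowledge; your differentiated hierarchy involving $\P(Q_i(t)=n)$ is a modest refinement of their discussion but, as you concede, does not close the argument, so the statement remains unproved in both your attempt and the paper.
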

The reasoning for this conjecture is as follows: If type $i\in\cal C$ customers arrive at a positive rate, $\lambda_i f_i(t)>0$, then the functional differential equation \eqref{eq:f_i_only} maintaining the expected queue length $q_i(t)$ is constant must be satisfied. Suppose that on an interval $(t_1,t_2)$ at least one of the types $i\in\cal C$ arrives with a positive density, i.e., $f_i(t)>0$ for all $t\in(t_1,t_2)$. Let $\Lambda_i(t):=\lambda_a f_a(t)+\lambda_b f_b(t)$ denote a solution of the functional differential equations \eqref{eq:f_i_only}, one equation for each type $i\in\cal C$, then by Lemma 9 of \cite{JS2013} each equation admits a unique solution $\Lambda_i()$ on the interval $(t_1,t_2)$. Furthermore, these unique solutions are monotone increasing with the initial queue length distribution $Q_i(t_1)$. We have been able to rule out the existence of a pair of distinct initial distributions $(Q_a(t_1),Q_b(t_1))$ along with rates $\mu_a<\mu_b$ such that the two solutions for two different differential equations coincide on a non-empty interval; $\Lambda_a(t)=\Lambda_b(t)$ for all $t\in[t_1,t_2]$. Such a situation seems unlikely but due to the elaborate dynamics of the functional differential equations we have been unable to establish a proof of non-existence.

If Conjecture \ref{conj:disjoint_intervals} does not hold then there exist multiple equilibria because any combination of $(f_a(t),f_b(t))$ such that $\Lambda_a(t) = \Lambda_b(t)=\lambda_a f_a(t)+\lambda_b f_b(t)$ is also an equilibrium. Otherwise, the uniqueness arguments for the single-class case (e.g., see \cite{JS2013}) can be applied directly to the equilibrium arrival rates on the disjoint arrival intervals.

\section{Fluid approximation}\label{sec:fluid}

We next consider a fluid model that approximates the case where the expected population size is large and service times are very small. In the exponential setting one can think of the limiting system: $\lambda_i\to\infty$ and $\mu_i\to\infty$ as $n\to\infty$ for $i\in\mathcal{C}$, while maintaining $\frac{\lambda_i}{\mu_i}\to K_i$ for some constant $K_i\in(0,\infty)$. Note that this approximation is applicable also to non-Markovian systems for which the service times are not exponentially distributed and/or the total number of customers is not Poisson distributed. Fluid models are often used to analyze queueing systems with elaborate dynamics (e.g., \cite{CM1991}). In particular, they are also used in many queueing game papers, e.g., \cite{EM2016} for a join/balk game in an observable system with a server that alternates between fast and slow service. We refer the reader to \cite{JS2013} for rigorous justification of this approximation for the bottleneck queue arrival game (see also \cite{H2013}).

There are two deterministic fluid populations of volumes $\lambda_a$ and $\lambda_b$ with each individual customer corresponding to a drop with infinitesimal service duration. Type $i\in\mathcal{C}$ customers believe that the deterministic output rate is $\mu_i$, where $\mu_a<\mu_b$. As before, we denote the strategies of type $i$ customers by the cdf $F_i$. Given strategies $(F_a,F_b)$ the fluid input to the queue during any interval $(t_1,t_2)\subset \mathcal{T}$ is 
\[
\lambda_a(F_a(t_2)-F_a(t_1))+\lambda_b(F_b(t_2)-F_b(t_1))\ .
\] 
The equilibrium solutions for $F_i$ is continuous, for reasons detailed below, with a density denoted by $f_i$. The potential output corresponding to the belief of type $i$ customers during the interval is $\mu_i(t_2-t_1)$, where the actual output may be lower in case the queue is empty during some or all of the interval. In particular, if we denote the queue length at time $t$ corresponding to belief $i\in\mathcal{C}$ by $q_i(t)$, then the output rate of the system at $t$ is 
\[
\mu_i\mathbf{1}(q_i(t)>0)+\min\{\mu_i,\lambda_a f_a(t)+\lambda_b f_b(t)\}\mathbf{1}(q_i(t)=0)\ .
\]   
As we will show, there are equilibria such that the queue is empty for any $t\in[0,T]$ for one or both types of customers. We refer to such cases as a degenerate equilibrium because they imply that customers experience no delays and the system is in fact not a bottleneck. The deterministic dynamics of the queue at $t\in[0,T]$ as long as the queue is never empty, which is the case in a non-degenerate equilibrium, are given by
\begin{equation}\label{eq:fluid_qt}
q_i(t)= \sum_{j\in\mathcal{C}} \lambda_j F_j(t)  -\mu_i t, \quad i\in\mathcal{C}\ .
\end{equation} 
Relying on Definition \ref{def:NE}, a Nash equilibrium is a profile $(F_a,F_b)$ such that for $q_i(t)$ is constant throughout the support $\sigma(F_i)$ (and higher or equal outside of the support). Clearly, if there is a discontinuity in any arrival distribution, $F_i(dt)>0$ for some $i=1,2$, then $q_i(t)>q_i(t-)$, which is not possible in equilibrium. Therefore, $F_i$ are indeed continuous in equilibrium. 

Due to the deterministic dynamics of \eqref{eq:fluid_qt} the queue length process for type $a$ dominates that of type $b$, as was established for the stochastic model in the previous section, and therefore we obtain the same structure as in Theorem \ref{thm:NE}. In particular, if a positive volume of optimistic customers arrive at the opening, $F_b(0)>0$, then all pessimistic customers arrive at the opening, $F_a(0)=1$.  If, however, $F_b(0)=0$ then $F_a(0)>0$ and the remaining customers of each type arrive continuously on disjoint intervals (as conjectured in Conjecture \ref{conj:disjoint_intervals}). Furthermore, in the latter case there are parameter settings such that the optimistic customers expect zero delay while the pessimistic customers expect a positive delay. Moreover, there may exist multiple equilibria of this type. For any non-degenerate equilibrium, i.e., one that all customers expect positive delay, all of the pessimistic customers arrive before the optimistic customers. Finally, there also exist multiple fully degenerate equilibria such that all customers expect zero delay and a queue is never formed. The degenerate solutions arise in cases such that the system capacity, corresponding to one or both types of beliefs, is high enough (with respect to the length of the admission period) to process all incoming.

We first state a lemma that establishes the properties of the equilibrium discussed above. These properties correspond to Lemma \ref{lemma:F_order} and Conjecture \ref{conj:disjoint_intervals} in the stochastic model of Section \ref{sec:exp}. The main result of this section is Theorem \ref{thm:NE_fluid} that details the explicit solution and conditions for all cases.

\begin{lemma}\label{lemma:F_fluid}
For any non-degenerate Nash equilibrium, i.e., $q_i(t)>0$ for any $i\in\mathcal{C}$ and $t\in[0,T]$, the following hold:
\begin{enumerate}
\item[(i)] If $F_b(t)>0$ for $t\in[0,T]$ then $F_a(t)=1$.
\item[(ii)] There is no time $t>0$ such that both types of customers arrive simultaneously; $\sigma(F_a)\cap\sigma(F_b)\cap (0,T] = \emptyset$.
\end{enumerate}
\end{lemma}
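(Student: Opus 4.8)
The plan is to build everything on the deterministic identity that makes the fluid model tractable where the stochastic model of Section~\ref{sec:exp} is not. First I would record that, since the equilibrium is non-degenerate, $q_i(t)>0$ for all $t$, so \eqref{eq:fluid_qt} is valid throughout $[0,T]$; subtracting its two instances ($i=a,b$) gives the exact relation
\[
q_a(t)-q_b(t)=(\mu_b-\mu_a)\,t,\qquad t\in[0,T],
\]
which is the fluid counterpart of Lemma~\ref{lemma:queue_order} (and explains why $q_a(t)>q_b(t)$ for $t>0$). I would also note that the expected queue faced by an arriving customer coincides with $q_i(t)$ for every $t>0$, while at $t=0$ a symmetric treatment of the opening atom keeps the two arrival costs equal (since $q_a(0)=q_b(0)$); hence the displayed identity holds for exactly the quantities entering the equilibrium conditions of Definition~\ref{def:NE}.

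The core step is an ordering of the supports. Suppose, for contradiction, that there exist $s\in\sigma(F_b)$ and $t_1\in\sigma(F_a)$ with $s<t_1$. The equilibrium condition for type $a$ gives $q_a(t_1)\le q_a(s)$ (the cost is minimal, and equal to $C_a$, on $\sigma(F_a)$), and the condition for type $b$ gives $q_b(s)\le q_b(t_1)$. Rewriting the first inequality with the identity yields $q_b(t_1)+(\mu_b-\mu_a)t_1\le q_b(s)+(\mu_b-\mu_a)s$, i.e.\ $q_b(t_1)-q_b(s)\le(\mu_b-\mu_a)(s-t_1)<0$, which contradicts $q_b(s)\le q_b(t_1)$. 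Therefore no point of $\sigma(F_b)$ lies strictly to the left of a point of $\sigma(F_a)$; equivalently $\sup\sigma(F_a)\le\inf\sigma(F_b)$.

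Part~(i) then follows at once: if $F_b(t)>0$ then $\sigma(F_b)\cap[0,t]\neq\emptyset$, so there is some $s\in\sigma(F_b)$ with $s\le t$, and by the ordering every $t_1\in\sigma(F_a)$ satisfies $t_1\le s\le t$; hence $\sigma(F_a)\subseteq[0,t]$ and $F_a(t)=1$. (Taking $t=0$ recovers the fluid analog of Lemma~\ref{lemma:F_order}.) For part~(ii) I would use strict monotonicity of the difference: if both types arrived with positive density on a common interval $I\subseteq(0,T]$, then $I\subseteq\sigma(F_a)\cap\sigma(F_b)$, so both $q_a$ and $q_b$ are constant on $I$, forcing $q_a-q_b$ to be constant on $I$ and contradicting that $q_a(t)-q_b(t)=(\mu_b-\mu_a)t$ is strictly increasing. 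This is exactly the mechanism only \emph{conjectured} in the stochastic model (Conjecture~\ref{conj:disjoint_intervals}): here the queue-length difference is deterministic and exactly linear, so two solutions of the equilibrium dynamics cannot coincide on any non-degenerate interval. Combined with the ordering $\sup\sigma(F_a)\le\inf\sigma(F_b)$, this leaves the supports disjoint on $(0,T]$. The delicate point — which I expect to be the main obstacle — is the boundary case $\sup\sigma(F_a)=\inf\sigma(F_b)$, where a single transition point could in principle lie in both closures; I would dispatch it either by invoking the detailed structure of Theorem~\ref{thm:NE_fluid} (the opening atom and ensuing no-arrival gap isolate the transition point from one side) or by noting that such a shared point has measure zero and carries no arriving mass, so it does not constitute simultaneous arrival in the sense of the statement.
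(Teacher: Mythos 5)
Your proof is correct, but it takes a genuinely different route from the paper's. The paper argues locally: part (i) at $t=0$ follows from the deterministic dominance $q_a(t)>q_b(t)$; part (ii) follows by differentiating the equilibrium condition (if $f_b(t)>0$ the total arrival rate must equal $\mu_b>\mu_a$, so $q_a$ strictly increases and hence $f_a(t)=0$); and part (i) on $(0,T]$ requires a separate contradiction argument showing that if type $b$ mass appeared before type $a$ finished arriving, then $q_b\equiv 0$ on $\sigma(F_b)$, violating non-degeneracy. You instead extract the single exact identity $q_a(t)-q_b(t)=(\mu_b-\mu_a)t$ (valid under non-degeneracy, which guarantees \eqref{eq:fluid_qt} holds throughout, and consistent at $t=0$ with the half-atom convention) and run a two-point exchange argument on the supports, yielding the ordering $\sup\sigma(F_a)\le\inf\sigma(F_b)$, from which both (i) and (ii) drop out at once. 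Your approach is more unified, requires no differentiation or existence of densities, and directly delivers the stronger structural statement that all pessimistic mass precedes all optimistic mass --- a fact the paper only obtains later via Theorem \ref{thm:NE_fluid}. What the paper's local analysis buys in exchange is the byproduct used in constructing the partially degenerate equilibrium (case (v) of the theorem): its proof shows exactly how type $b$ can arrive early when $q_b=0$, i.e., what happens when non-degeneracy fails. Finally, your caveat about a shared boundary point is well-taken and in fact exposes an imprecision in the paper itself: in case (iv) of Theorem \ref{thm:NE_fluid} the closed supports share the point $t_b$, so the literal claim $\sigma(F_a)\cap\sigma(F_b)\cap(0,T]=\emptyset$ holds only under the reading (which the paper's own proof implicitly adopts) that no time has both types arriving at positive rate; your resolution of that point matches the paper's intent.
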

\begin{proof}
We first establish (i) for $t=0$. The expected queue size ahead of an arbitrary chosen customer arriving at time $t = 0$ is $q_0=\frac{\lambda_a F_a(0)+\lambda_b F_b(0)}{2}$, regardless of the customer type, and the respective waiting time for a type $i$ customer is $\frac{q_0}{\mu_i}$. If $F_b(0)>0$ then $q_{b}(t) \ge q_{0}$ for all $t > 0$ by Definition~\ref{def:NE}. From the deterministic dynamics of the queue in \eqref{eq:fluid_qt}, $\mu_b>\mu_a$ implies that $q_a(t)>q_b(t)\geq q_0$ for all $t > 0$, hence $F_a(0)=1$

Next we verify (ii). Let $f_a,f_b$ denote the density of the continuous equilibrium arrival of types $a$ and $b$, respectively. If $f_b(t)>0$ for $t\in(0,T]$ then the equilibrium condition of keeping the expected queue length $q_b(t)$ constant is obtained by taking derivative of \eqref{eq:fluid_qt}, yielding
\[
 \lambda_a f_a(t)+\lambda_b f_b(t)=\mu_b >\mu_a \quad \text{ for } t \in \sigma(F_{b}) \cap (0,T].
\]
Therefore, the expected queue length $q_a(t)$ is increasing in $t \in \sigma(F_{b}) \cap (0,T]$ and and thus $f_a(t)=0$, which implies that $\sigma(F_a)\cap\sigma(F_b)\cap (0,T] =\emptyset$.

We are left with verifying (i) for $t\in(0,T]$. 
We consider a non-degenerate equilibrium $(F_a,F_b)$ such that $F_a(0)\in(0,1)$ and $F_b(0)=0$. In what follows, we show that if there exists some $t$ such that $F_a(t)<1$ and $F_b(t)>0$ then $q_b(t)=0$ for any $t\in\sigma(F_b)$, which contradicts the definition of ``non-degenerate''
Let $t_b>0$ denote the first time with a positive arrival rate of type $b$ customers; i.e., $f_b(t_b)>0$. If $F_a(t_b)<1$ then there exists some $t_a>t_b$ such that type $a$ customers arrive with a positive rate at time $t_a$, i.e., $f_a(t_a)>0$. The queue length at $t\in(0,t_b)$ for type $b$ customers is given by equation \eqref{eq:fluid_qt},
\[
q_b(t)=2q_0-\mu_b t =\lambda_a F_a(t)-\mu_b t \ ,
\]
and for $t\in[t_b,t_a)$,
\[
q_b(t)=\lambda_a F_a(t_b)+\lambda_b\int_{t_b}^t f_b(u)\diff u-\mu_b t \ .
\]
If $q_b(t_b)>0$ then in equilibrium we have that for $t\in[t_b,t_a)$, $q_b(t)=q_b(t_b)$ and
\[
\frac{\diff}{\diff t} q_b(t)=\lambda_b f_b(t)-\mu_b  =0 \ .
\]
Therefore, for $t\in[t_b,t_a)$ we further have that
\[
q_a(t)=\lambda_a F_a(t_b)+\lambda_b\int_{t_b}^t f_b(u)\diff u-\mu_a t \ ,
\]
and, as $\mu_a<\mu_b$,
\begin{align}
\label{eqn:diff_q_a(t)>0} 
	\frac{\diff}{\diff t} =\lambda_b f_b(t)-\mu_a >\lambda_b f_b(t)-\mu_b =0\ .
\end{align}
Recall that $0 \in \sigma(F_{a})$, $t_{b} \notin \sigma(F_{a})$ and Definition~\ref{def:NE} imply that $q_0 = q_a(0) \le q_a(t_b)$, hence from (\ref{eqn:diff_q_a(t)>0}), we have
\[
q_0 \le q_a(t_b) < q_a(t_a),
\]
which contradicts the equilibrium assumption. We conclude that the only possible equilibrium such that $t_b<t_a$ is if
\[
\lambda_b f_b(t) \leq \mu_a <\mu_b \quad \forall t\in\sigma(F_b) \ ,
\]
and therefore $q_b(t)=0$ for all $t\in\sigma(F_b)$.
\end{proof}

\begin{theorem}\label{thm:NE_fluid}
The (possibly multiple) Nash equilibria $(F_a,F_b)$ satisfy the following:
\begin{enumerate}
\item[(i)] If $T\leq\frac{\lambda_a+\lambda_b}{2\mu_b}$ then $F_a(0)=F_b(0)=1$.
\item[(ii)] If $\frac{\lambda_a+\lambda_b}{2\mu_b}<T<\frac{\lambda_a+2\lambda_b}{2\mu_b}$ then $F_a(0)=1$, $F_b(0)\in(0,1)$ and $f_b(t)=\frac{\mu_b}{\lambda_b}$ for $t\in(t_b,T]$, where $t_b=\frac{\lambda_a+\lambda_b F_b(0)}{2\mu_b}$ and $F_b(0)=\frac{2\mu_b}{\lambda_b}\left(\frac{\lambda_a+2\lambda_b}{2\mu_b}-T\right)$.
\item[(iii)] If $\frac{\lambda_a+2\lambda_b}{2\mu_b}\leq T\leq \frac{\lambda_a}{2\mu_a}+\frac{\lambda_b}{\mu_b}$ then $F_a(0)=1$, $F_b(0)=0$ and $f_b(t)=\frac{\mu_b}{\lambda_b}$ for $t\in(t_b,T]$, where $t_b=T-\frac{\lambda_b}{\mu_b}$.
\item[(iv)] If $\frac{\lambda_a}{2\mu_a}+\frac{\lambda_b}{\mu_b}<T \leq \frac{\lambda_a}{\mu_a}+\frac{\lambda_b}{\mu_b}$, then for type $a$ customers, $F_a(0)\in(0,1)$, $f_a(t)=\frac{\mu_a}{\lambda_a}$ for $t\in[t_a,t_b)$ where $t_a=\frac{\lambda_aF_a(0)}{2\mu_a}$, $t_b=T-\frac{\lambda_b}{\mu_b}$ and $F_a(0)=\frac{2\mu_a}{\lambda_a}\left(\frac{\lambda_a}{\mu_a}+\frac{\lambda_b}{\mu_b}-T\right)$. For type $b$, $F_b(0)=0$, $f_b(t)=\frac{\mu_b}{\lambda_b}$ for $t\in[t_b,T]$.
\item[(v)] Suppose one of the following conditions holds:
\begin{enumerate}
\item[(v.a)] $\mu_b\geq 2\mu_a$ and
\[
\frac{\lambda_a+2\lambda_b}{2\mu_a}<T<\frac{\lambda_a+\lambda_b}{\mu_a}\ .
\]
\item[(v.b)] $\mu_b<2\mu_a$ and
\[
\max\left\lbrace \frac{\lambda_a+2\lambda_b}{2\mu_a},\frac{\lambda_a+\lambda_b}{\mu_a}-\frac{\lambda_b\mu_b}{\mu_a(2\mu_a-\mu_b)}\right\rbrace<T<\frac{\lambda_a+\lambda_b}{\mu_a}\ .
\]
\end{enumerate}
Then there exists at least one equilibrium such that the equilibrium strategy of type $a$ customers is given by $F_a(0)=2\frac{\lambda_a+\lambda_b-\mu_a T}{\lambda_a} \in (0,1)$, $f_a(t)=\frac{\mu_a}{\lambda_a}$ for $t\in[t_a,T]$ where $t_a=\frac{\lambda_a F_a(0)+2\lambda_b}{2\mu_a}$. The strategy of type $b$ customers is given by $F_b(0)=0$, $f_b(t)=\frac{\mu_a\mu_b}{(\lambda_a+\lambda_b-\mu_a T)(\mu_b-2\mu_a)+\lambda_b\mu_b}$ for $t\in[t_b,t_a]$, where $t_b=\frac{\lambda_a F_a(0)}{\mu_b}$.
\item[(vi)] If $T>\frac{\lambda_a}{\mu_a}+\frac{\lambda_b}{\mu_b}$ then there are multiple equilibria such that there are no arrivals at zero, $F_a(0)=F_b(0)=0$, both types can arrive on continuously and the queue is always empty.
\end{enumerate}
\end{theorem}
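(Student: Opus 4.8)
The plan is to leverage Lemma \ref{lemma:F_fluid} to fix the skeleton of every equilibrium and then solve each regime by imposing the indifference and no-deviation conditions together with the mass normalization $F_i(T)=1$. Lemma \ref{lemma:F_fluid} already tells us that the two supports are disjoint on $(0,T]$ and that $F_b(0)>0$ forces $F_a(0)=1$; differentiating the constant-queue condition in \eqref{eq:fluid_qt} on any interval where a single type arrives gives the equilibrium density $f_i(t)=\mu_i/\lambda_i$ there, since the input rate must match that type's believed service rate to hold $q_i$ fixed. Thus on its continuous support type $a$ arrives at rate $\mu_a/\lambda_a$ and type $b$ at rate $\mu_b/\lambda_b$. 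Because $q_a$ dominates $q_b$ (Lemma \ref{lemma:queue_order}) and, while type $a$ arrives alone $\frac{d}{dt}q_b=\mu_a-\mu_b<0$ whereas while type $b$ arrives alone $\frac{d}{dt}q_a=\mu_b-\mu_a>0$, the only admissible ordering in a non-degenerate equilibrium is pessimistic-before-optimistic; the reverse ordering is exactly what the final part of the proof of Lemma \ref{lemma:F_fluid} rules out unless $q_b\equiv 0$.

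For the non-degenerate regimes (i)--(iv) I would then treat $T$ as a sliding parameter. The candidate is a type-$a$ atom $F_a(0)$, a possible type-$a$ continuous block, then a type-$b$ block ending at $T$ with $f_b=\mu_b/\lambda_b$ on $[t_b,T]$; normalizing type $b$ gives $t_b=T-\lambda_b/\mu_b$ whenever $F_b(0)=0$. The unknowns $F_a(0)$, $F_b(0)$, $t_a$ are pinned by: (a) indifference of the type holding an atom, which fixes the gap endpoint via $q_i(t_i)=q_0$ and yields $t_a=\lambda_aF_a(0)/(2\mu_a)$ and $t_b=(\lambda_a+\lambda_bF_b(0))/(2\mu_b)$; (b) the normalizations. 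Each stated threshold on $T$ is precisely the value at which one of these quantities leaves its admissible range: $\frac{\lambda_a+\lambda_b}{2\mu_b}$ is where type $b$'s no-deviation constraint first fails at $t=T$ (the transition (i)$\to$(ii)); $\frac{\lambda_a+2\lambda_b}{2\mu_b}$ is where $F_b(0)$ hits $0$; $\frac{\lambda_a}{2\mu_a}+\frac{\lambda_b}{\mu_b}$ is where type $a$'s no-deviation constraint at the queue's dip $t_b$ forces $F_a(0)<1$; and $\frac{\lambda_a}{\mu_a}+\frac{\lambda_b}{\mu_b}$ is where $F_a(0)$ itself hits $0$. Checking the omitted no-deviation inequalities reduces to verifying that $q_a$ attains its minimum at the junction $t_b$ (it decreases while type $a$ arrives and increases once type $b$ starts), so a single inequality per case suffices; continuity of the formulas across adjacent thresholds gives a useful consistency check.

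The degenerate type-$b$ family (v) needs separate treatment because it uses the reverse ordering permitted only when $q_b\equiv 0$: a type-$a$ atom, then type $b$ on $[t_b,t_a]$ experiencing zero delay, then type $a$ on $[t_a,T]$. Here type $a$'s indifference across its atom and its block $[t_a,T]$ fixes $F_a(0)$ and $t_a$, while the requirement that $q_b$ stay identically zero on $[t_b,t_a]$ (equivalently $\lambda_bf_b\le\mu_b$ with the cumulative type-$b$ input exactly absorbing the backlog left by the atom) determines $t_b$ and the constant $f_b$. Solving this linear system gives the stated $F_a(0)=2(\lambda_a+\lambda_b-\mu_aT)/\lambda_a$ together with $t_a$, $t_b$, and $f_b$. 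The dichotomy $\mu_b\gtrless 2\mu_a$ in (v.a)/(v.b) is the main obstacle: it is exactly the sign of $\mu_b-2\mu_a$ in the denominator of $f_b$, which governs whether the binding feasibility constraint is $q_b\ge 0$ throughout $[t_b,t_a]$ or instead positivity of $f_b$; tracking which constraint binds is what produces the extra lower bound $\frac{\lambda_a+\lambda_b}{\mu_a}-\frac{\lambda_b\mu_b}{\mu_a(2\mu_a-\mu_b)}$ in (v.b). I expect verifying the self-consistency of $q_b\equiv 0$ — that the queue becomes neither negative nor positive anywhere on type $b$'s interval — to be the most delicate step.

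Finally, case (vi) is the fully degenerate regime. When $T>\frac{\lambda_a}{\mu_a}+\frac{\lambda_b}{\mu_b}$ there is enough time to clear both populations at their believed rates without ever forming a queue, so any pair $(F_a,F_b)$ with $F_a(0)=F_b(0)=0$ whose instantaneous input never exceeds the relevant $\mu_i$ keeps $q_a\equiv q_b\equiv 0$ and is trivially an equilibrium (every arrival time gives zero delay), yielding the asserted multiplicity; conversely the non-degenerate construction is infeasible here because it would require $F_a(0)<0$. This closes the exhaustive description over $T\in(0,\infty)$.
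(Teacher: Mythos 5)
Your proposal follows essentially the same route as the paper: a case-by-case construction that uses Lemma \ref{lemma:F_fluid} to fix the support structure, derives the densities $f_i=\mu_i/\lambda_i$ from the constant-queue condition, pins down the atoms and thresholds via indifference and normalization, and handles case (v) separately through $q_b\equiv 0$ with the sign of $\mu_b-2\mu_a$ governing the extra bound in (v.b). The one small imprecision is in case (v): the binding rate constraint there is $\lambda_b f_b\le\mu_a$ (needed so that $q_a$ does not increase on $[t_b,t_a)$, which would violate type $a$'s no-deviation condition since $q_a(t_a)=q_0$), not merely $\lambda_b f_b\le\mu_b$; this is the condition equivalent to the upper bound $T<(\lambda_a+\lambda_b)/\mu_a$.
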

\begin{remark}
The equilibrium solutions of cases (i)-(iii) are unique. Observe, however, that the intervals for $T$ in (iv) and (v) are not necessarily disjoint, and therefore, if they intersect then both cases are equilibrium solutions. Similarly, the intervals may also intersect for cases (v) and (vi). Case (v) is a partially degenerate equilibrium because type $b$ customers expect zero delay and case (vi) is fully degenerate because all customers expect zero delay. In the degenerate cases of (v) and (vi) there exist a continuum of equilibria solutions because the arrival densities need not be uniform as long as the queue always remains empty.
\end{remark}
\begin{proof}
The proof constructs the possible equilibrium solutions and verifies the conditions are as given by the theorem statement. The first three cases correspond to the unique non-degenerate solutions satisfying Lemma \ref{lemma:F_fluid}(ii). In the last two cases we construct degenerate equilibrium such that the queue is always empty for at least one type of customers. In the latter cases we further explain why there exist multiple equilibria solutions. Case (iv) is a bit more involved because, depending on the specific parameters, both degenerate and non-degenerate equilibria are possible.
\begin{enumerate}
\item[(i)] If $F_a(0)=F_b(0)=1$ then the expected queue size at $t>0$ for a customer of type $i$ is
\[
q_i(u)=2q_0-\mu_i u=\lambda_a+\lambda_b-\mu_i u, \quad u\in(0,t)\ .
\]
A type $b$ customer will arrive at $t$ if $q_b(t)\leq q_0$, i.e., $t\geq t_b=\frac{\lambda_a+\lambda_b}{2\mu_b}$, but $T\leq \frac{\lambda_a+\lambda_b}{2\mu_b} \leq t_b$ and so no type $b$ customer will deviate from $F_b$. Furthermore, $\mu_a<\mu_b$ implies that $q_a(t)\geq q_b(t)$ and so customers of type will certainly not deviate from strategy $F_a$, hence this is indeed an equilibrium and case (i) is verified.
\item[(ii)] Next consider $F_a(0)= 1$ and $F_b(0)\in[0,1)$. In this case $q_0=\frac{\lambda_a+\lambda_b F_b(0)}{2}$ and for any type $b$ customer the expected queue size at $t>0$ such that there are no arrivals during $(0,t)$ is
\[
q_b(u)=2q_0-\mu_b u=\lambda_a+\lambda_b F_b(0)-\mu_b u, \quad u\in(0,t)\ .
\]
Solving $q_0=q_b(t_b)$ yields that $f_b(t)=0$ for $t<t_b=\frac{\lambda_a+\lambda_b F_b(0)}{2\mu_b}$. By applying \eqref{eq:fluid_qt}, the equilibrium condition $q_b(t)=q_0$ for $t\in[t_b,T]$ yields $f_b(t)=\frac{\mu_b}{\lambda_b}$. The probability $F_b(0)$ is obtained by solving $F_b(0)+(T-t_b)\frac{\mu_b}{\lambda_b}=1$, yielding $F_b(0)=\frac{2\mu_b}{\lambda_b}\left(\frac{\lambda_a+2\lambda_b}{2\mu_b}-T\right)$. Finally $F_b(0)<1$ is equivalent to $T>\frac{\lambda_a+\lambda_b}{2\mu_b}$ and $F_b(0)\geq 0$ is equivalent to $T\leq \frac{\lambda_a+2\lambda_b}{2\mu_b}$, yielding case (ii).
\item[(iii)] Next consider $F_a(0)=1$ and $F_b(0)=0$. In this case all type $b$ customers arrive uniformly with density $f_b(t)=\frac{\mu_b}{\lambda_b}$ for $t\in(t_b,T]$, where $t_b=T-\frac{\lambda_b}{\mu_b}$. Type $a$ customers will not deviate from $F_a$ if 
\[
q_a(t_b)=\lambda_a-\mu_a t_b=\lambda_a-\mu_a\left(T-\frac{\lambda_b}{\mu_b}\right)\geq q_0=\frac{\lambda_a}{2}\ ,
\]
and this is equivalent to $T\leq \frac{\lambda_a}{2\mu_a}+\frac{\lambda_b}{\mu_b}$. Moreover, type $b$ customers will not deviate if 
\[
q_b(t_b)=\lambda_a-\mu_b t_b=\lambda_a-\mu_b\left(T-\frac{\lambda_b}{\mu_b}\right)\leq q_0=\frac{\lambda_a}{2}\ ,
\]
which is equivalent to $T>\frac{\lambda_a+2\lambda_b}{2\mu_b}$ and case (iii) is confirmed.
\item[(iv)] Now assume that $F_a(0)\in[0,1)$, $F_b(0)=0$, $f_a(t)=\frac{\mu_a}{\lambda_a}$ for $t\in[t_a,t_b)$ where $t_a=\frac{\lambda_aF_a(0)}{2\mu_a}$, $F_b(0)=0$ and $f_b(t)=\frac{\mu_b}{\lambda_b}$ for $t\in[t_b,T]$. Solving $F_a(0)+(t_b-t_a)\frac{\mu_a}{\lambda_a}=1$ and $(T-t_b)\frac{\mu_b}{\lambda_b}=1$ yields $F_a(0)=\frac{2\mu_a}{\lambda_a}\left(\frac{\lambda_a}{\mu_a}+\frac{\lambda_b}{\mu_b}-T\right)$ and $t_b=T-\frac{\lambda_b}{\mu_b}$. Furthermore, $F_a(0)<1$ is equivalent to $T> \frac{\lambda_a}{2\mu_a}+\frac{\lambda_b}{\mu_b}$ and $F_a(0)\geq 0$ is equivalent to $T\leq \frac{\lambda_a}{\mu_a}+\frac{\lambda_b}{\mu_b}$, verifying case (iv).
\item[(v)] We next consider a solution such that $F_a(0)\in(0,1)$ and $F_b(0)=0$ such that $F_a(t_a)<F_b(t_a)=1$ for some $t_a<T$. Let $t_b$ denote the first time such that $f(t_b)>0$. Lemma \ref{lemma:F_fluid} implies that $q_b(t_b)=0$ in equilibrium, hence,
\begin{equation}\label{eq:cond1}
\lambda_a F_a(0)-\mu_b t_b=0 \ \Leftrightarrow \ t_b=\frac{\lambda_a F_a(0)}{\mu_b}\ .
\end{equation}
For $t\in[t_b,t_a)$, the type $b$ density is $f_b(t)=k$, for some $k>0$ such that $\lambda_b k\leq\mu_a<\mu_b$. The latter condition is necessary because Definition \ref{def:NE} states that $q_a(t_a)=q_0$ and $q_a(t_b)\geq q_0$ (as $t_a\in\sigma(F_a)$ and $t_b\notin\sigma(F_a)$), hence $\lambda_b k\leq \mu_a$ ensures that $q_a(t_a)\leq q_a(t_b)$. In addition, $(t_a-t_b)k=1$ yields
\begin{equation}\label{eq:cond2}
f_b(t)=k=(t_a-t_b)^{-1},\ t\in[t_b,t_a)\ .
\end{equation}
Furthermore, in equilibrium we have that 
\[
q_a(t_a)=\lambda_a F_a(0)+\lambda_b-\mu_a t_a=q_0=\frac{\lambda_a F_a(0)}{2}\ ,
\] 
hence
\begin{equation}\label{eq:cond3}
t_a=\frac{\lambda_a F_a(0)+2\lambda_b}{2\mu_a}\ .
\end{equation}
For $t\in[t_a,T]$ we have that $f_a(t)=\frac{\mu_a}{\lambda_a}$, yielding
\begin{equation}\label{eq:cond4}
(T-t_a)\frac{\mu_a}{\lambda_a}+F_a(0)=1\ .
\end{equation}
Combing \eqref{eq:cond3} and \eqref{eq:cond4} yields
\[
F_a(0)=2\frac{\lambda_a+\lambda_b-\mu_a T}{\lambda_a}\ .
\]
Combining \eqref{eq:cond1}-\eqref{eq:cond4} yields
\[
k=\frac{\mu_a\mu_b}{(\lambda_a+\lambda_b-\mu_a T)(\mu_b-2\mu_a)+\lambda_b\mu_b}\ .
\]
Recall that for this to be an equilibrium there is an extra condition $0<\lambda_b k\leq\mu_a$. Furthermore, $0<F_a(0)<1$ is equivalent to $\frac{\lambda_a+2\lambda_b}{2\mu_a}<T<\frac{\lambda_a+\lambda_b}{\mu_a}$. First observe that $\lambda_b k \leq \mu_a$ is equivalent to 
\[
T<\frac{\lambda_a+\lambda_b}{\mu_a}\ .
\]
We verify the condition $k>0$ for the three possible cases:
\begin{itemize}
\item If $\mu_b= 2\mu_a$ then $k=\frac{\mu_a}{\lambda_b}$ and then $\lambda_b k=\mu_a$.
\item If $\mu_b> 2\mu_a$ then a necessary condition for $k>0$ is
\[
T<\frac{\lambda_a+\lambda_b}{\mu_a}+\frac{\lambda_b\mu_b}{\mu_a(\mu_b-2\mu_a)}\ .
\]
\item If $\mu_b< 2\mu_a$ then a necessary condition for $k>0$ is
\[
T>\frac{\lambda_a+\lambda_b}{\mu_a}-\frac{\lambda_b\mu_b}{\mu_a(2\mu_a-\mu_b)}\ .
\]
\end{itemize}
To summarize, this equilibrium is possible if one of the conditions (v.a) or (v.b) holds. This equilibrium is not unique because for small $\epsilon$ you can choose $t_b=t_b+\epsilon$ and $k=k+g(\epsilon)$, where $g$ is some continuous function, and all of the conditions will still be satisfied.
\item[(vi)] Finally, we consider the case of $F_a(0)=F_b(0)=0$. We call this a degenerate case because
\[
q_a(t)=q_b(t)=q_0=0,\quad \forall t\in[0,T],
\]
i.e., a queue is never formed and the system is not a bottleneck. Suppose that type $a$ customers arrive uniformly with density $f_a(t)=\frac{\mu_a}{\lambda_a}$ on $t\in[0,T_a]$ and type $b$ customers arrive uniformly with density $f_b(t)=\frac{\mu_b}{\lambda_b}$ on $t\in[T_a,T_b]$. The above define proper distributions if $T_a=\frac{\lambda_a}{\mu_a}$ and $T_b=t_a+\frac{\lambda_b}{\mu_b}$. Therefore, this is an equilibrium if $T\geq T_b=\frac{\lambda_a}{\mu_a}+\frac{\lambda_b}{\mu_b}$, as in case (vi). The equilibrium is not not unique because the above uniform distributions can be constructed on any disjoint subsets of $[0,T]$, or even on subsets with intersection as long as $\lambda_a f_a(t)+\lambda_b f_b(t)\leq \mu_a $ for any $t\in[0,T]$.
\end{enumerate}
\end{proof}

In Figure \ref{fig:fluid} we illustrate all possible non-degenerate equilibrium outcomes for fixed parameters $T=1$, $\lambda_a=1$, $\lambda_b=2$, $\mu_a=1$ and varying values of $\mu_b$. When $\mu_b$ is low then all customers arrive at $t=0$ as seen in Figure \ref{fig:fluid}a. As $\mu_b$ grows the optimistic (type $b$) customers become more optimistic regarding service speed and will spread out more throughout the acceptance period as seen in Figure \ref{fig:fluid}b and Figure \ref{fig:fluid}c. Finally, if $\mu_b$ is very high all of the optimistic customers arrive towards the end of the period which results in pessimistic (type $a$) customers also mixing between $t=0$ and a continuous interval within the acceptance period.

\begin{figure}[H]
\centering
\begin{subfigure}{.45\linewidth}
\centering
\begin{tikzpicture}[xscale=3.3,yscale=2.5]
\def\xmin{0}
  \def\xmax{1.1}
  \def\ymin{0}
  \def\ymax{1.1}
    \draw[->] (\xmin,\ymin) -- (\xmax,\ymin) node[right] {$t$} ;
    \draw[->] (0,\ymin) -- (0,\ymax)  ;
    \foreach \x in {0,1}
    \node at (\x,\ymin) [below] {\x};
    \foreach \y in {0,1}
    \node at (0,\y) [left] {\y};
	
	\draw[red,thick] (0,1)  --  (1,1) node[above] {\textcolor{red}{$F_a(t)$}} ;
	
	\draw[blue,thick,densely dashed] (0,1)  --  (1,1) node[below] {\textcolor{blue}{$F_b(t)$}} ;
\end{tikzpicture}
\caption{$\mu_b=1.5$}
\end{subfigure}
\begin{subfigure}{.45\linewidth}
\centering
\begin{tikzpicture}[xscale=3.3,yscale=2.5]
 \def\xmin{0}
  \def\xmax{1.1}
  \def\ymin{0}
  \def\ymax{1.1}
    \draw[->] (\xmin,\ymin) -- (\xmax,\ymin) node[right] {$t$} ;
    \draw[->] (0,\ymin) -- (0,\ymax)  ;
    \foreach \x in {0,1}
    \node at (\x,\ymin) [below] {\x};
    \foreach \y in {0,1}
    \node at (0,\y) [left] {\y};
	
	\draw[red,thick] (0,1)  --  (1,1) node[above] {\textcolor{red}{$F_a(t)$}} ;
	
	\draw[blue,thick,densely dashed] (0,0.5) -- (0.5,0.5)  --  (1,1) ;
	\node at (1.03,0.7)  {\textcolor{blue}{$F_b(t)$}};
	\node[below] at (0.5,0)  {\textcolor{blue}{$t_b$}};
	\draw[blue,dotted] (0.5,0) -- (0.5,0.5) ;
	\node[left] at (0,0.5)  {\textcolor{blue}{$F_b(0)$}};

\end{tikzpicture}
\caption{$\mu_b=2$}
\end{subfigure}

\begin{subfigure}{.45\linewidth}
\centering
\begin{tikzpicture}[xscale=3.3,yscale=2.5]
 \def\xmin{0}
  \def\xmax{1.1}
  \def\ymin{0}
  \def\ymax{1.1}
    \draw[->] (\xmin,\ymin) -- (\xmax,\ymin) node[right] {$t$} ;
    \draw[->] (0,\ymin) -- (0,\ymax)  ;
    \foreach \x in {0,1}
    \node at (\x,\ymin) [below] {\x};
    \foreach \y in {0,1}
    \node at (0,\y) [left] {\y};
	
	\draw[red,thick] (0,1)  --  (1,1) node[above] {\textcolor{red}{$F_a(t)$}} ;
	
	\draw[blue,thick,densely dashed] (0,0) -- (0.5,0)  --  (1,1) ;
	\node at (1.03,0.7)  {\textcolor{blue}{$F_b(t)$}};
	\node[below] at (0.5,0)  {\textcolor{blue}{$t_b$}};

\end{tikzpicture}
\caption{$\mu_b=4$}
\end{subfigure}
\begin{subfigure}{.45\linewidth}
\centering
\begin{tikzpicture}[xscale=3.3,yscale=2.5]
  \def\xmin{0}
  \def\xmax{1.1}
  \def\ymin{0}
  \def\ymax{1.1}
    \draw[->] (\xmin,\ymin) -- (\xmax,\ymin) node[right] {$t$} ;
    \draw[->] (0,\ymin) -- (0,\ymax) ;
    \foreach \x in {0,1}
    \node at (\x,\ymin) [below] {\x};
    \foreach \y in {0,1}
    \node at (0,\y) [left] {\y};
	
	\draw[red,thick] (0,0.4) -- (0.25,0.4) -- (0.75,1) --  (1,1) node[above] {\textcolor{red}{$F_a(t)$}} ;
	\node[below] at (0.25,0)  {\textcolor{red}{$t_a$}};
	\node[left] at (0,0.4)  {\textcolor{red}{$F_a(0)$}};
	\draw[red,dotted] (0.25,0) -- (0.25,0.4) ;
	\draw[red,dotted] (0.75,0) -- (0.75,1) ;
	
	\draw[blue,thick,densely dashed] (0,0) -- (0.75,0)  --  (1,1) ;
	\node at (1.07,0.7)  {\textcolor{blue}{$F_b(t)$}};
	\node[below] at (0.75,0)  {\textcolor{blue}{$t_b$}};

\end{tikzpicture}
\caption{$\mu_b=8$}
\end{subfigure}
\caption{Equilibrium arrival distributions (solid red for type $a$ and dashed blue for type $b$) for varying service rate $\mu_b$ of the optimistic type $b$ customers. The other parameters are fixed: $T=1,\lambda_a=1,\lambda_b=2,\mu_a=1$.}
\label{fig:fluid}
\end{figure}
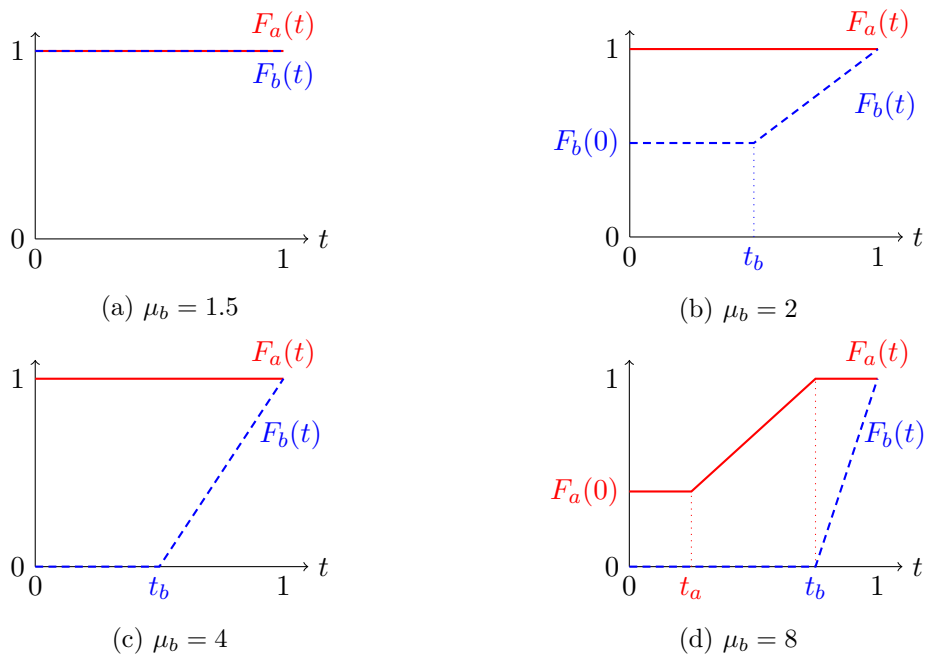

\section{Discrete-time game}\label{sec:discrete}

In this section, we consider a discrete-time queue with an acceptance period and two types of arriving customers with type-dependent beliefs on their service times. There are several motivations for studying the discrete-time system. Most notably, it enables a numerical computation of the equilibrium arrival distribution for general service times (see e.g., \cite{SMF2019} for the case of homogeneous customers with a common belief). The computational advantage is even more relevant for the model with heterogeneous customer beliefs because, as was shown in Section \ref{sec:exp}, the structure of the equilibrium is very elaborate even in the fully Markovian setting with exponential service times. In particular, solving the equilibrium dynamics in continuous time seems intractable and there may be multiple equilibria as indicated by the fluid approximation of Section \ref{sec:fluid}. Another important advantage of the discrete-time system is the flexibility to deal with systems that operate inherently with predetermined time slots for admissions, i.e., in which the acceptance period is divided into a discrete number of slots and arrivals are only allowed in the designated slots (see e.g., \cite{book_BK2012} for a detailed review of the role of discrete time systems in modeling communication systems). Moreover, in \cite{HK2011} it was shown that limiting the number of possible arrival time slots may lower the expected waiting time for all customers in equilibrium. Finally, the discrete-time system can also be used to approximate a continuous-time system by considering a very large number of small slots. 
For this discrete-time system, we propose a best-response based algorithm for computing the equilibrium arrival distributions of both types of customers, and show the existence of the equilibrium arrival distributions (see Sections \ref{sec:algo_NE} and \ref{sec:discrete_NE_existence}). In Section \ref{sec:numerical} we present some numerical examples and show that our algorithm gives similar equilibrium arrival distribution to that of the fluid model of Section \ref{sec:fluid}. 

In what follows, we formally describe the discrete-time system studied in this section. 
The time axis $\bbR$ is divided into time units of length one, and the unfinished workload (if any) in the system decreases by one in each time units. The acceptance period is given by $[0,(T+1)\tau) \subset \bbR$, where $\tau$ and $T$ are positive integers. The acceptance period is split into time slots of length $\tau$, i.e.,
\begin{align*}
\label{eqn:}
 [0,(T+1)\tau) = \cup_{t \in \calT} [t\tau, (t+1)\tau),
\end{align*}
where $\calT := \{ 0,1,2,\ldots, T \}$ and $[t\tau, (t+1)\tau)$ is referred to as slot $t$. Customers can choose any slot $t \in \calT$, and arrivals occur in the beginning of the slots. The population sizes of types $a$ and $b$ customers follow iid Poisson random variables with mean $\lambda_a$ and $\lambda_b$, respectively.  We further assume that the service times are positive integer valued random variables for either of the types. For $i \in \calC=\{a,b\} $, let $\vc{x}_{i} := (x_{i}(k) ; k \ge 1)$ denote the service time distribution with type $i$, where its mean is denoted by $\chi_{i} := \sum_{k \ge 1} k x_{i}(k)$. A mixed strategy is now a discrete probability distribution $({p}_{t} ; t \in \calT)$. A symmetric Nash equilibrium is given by a pair of distributions $\vc{p}_{a} $ and $\vc{p}_{b}$ with respective cdf's $(F_a,F_b)$ satisfying Definition \ref{def:NE}.

We first compute the mean unfinished workload for each customer's type ($a$ and $b$) given an arrival distribution relying on the more detailed analysis of \cite{SMF2019}. This is then used to construct a sufficient and necessary condition so that $(\vc{p}_{a},\vc{p}_{b})$ is Nash equilibrium. Finally, we give an algorithm for computing the equilibrium. 

In the following analysis all random variables and expectations are given for a pair of arrival distributions $(\vc{p}_{a},\vc{p}_{b})$, but we omit this from the notation for the sake of brevity. For $i \in \calC$ and $t \in \calT$, let $V_{i,t-}$ denote the unfinished workload in system immediately before slot $t$ with type $i$. Recall that from Definition~\ref{def:belief}, the mean waiting time corresponding to belief $i \in \calC$ is computed by a workload process with iid service times with distribution $\vc{x}_{i}$. The probability distribution of $V_{i,t-}$ is denoted by $\vc{v}_{i,t} = (v_{i,t}(k) ; k \ge 0)$, i.e., $v_{i,t}(k) = \P( V_{i,t-} = k ), \quad k \ge 0, \ i \in \calC, \ t \in \calT$. Since there is no arriving customer before slot $0$, i.e., we have $v_{i,0}(k) = \1(k=0)$, where $\1(\cdot)$ is the indicator function of an event in the parenthesis. 

The total work arriving in slot $t$ is
\[
H_{i,t} = \sum_{k=1}^{N_{t}} X_{i,k}\ ,
\]
where $N_{t}$ is a Poisson distributed random variable with mean $\lambda_{a} p_{a,t} + \lambda_{b} p_{b,t}$, and $X_{i,k}$'s are iid random variables with distribution $\vc{x}_{i}$. Observe that $H_{i,t}$ follows a compound Poisson distribution, and denote its probability distribution by $\vc{h}_{i,t} := (h_{i,t}(k) ; k \ge 0)$, which is recursively computed (see e.g., \cite{ZL2016}). Figure~\ref{fig:samplepathwl} illustrates a sample path of the unfinished workload in view of type $a$ customers.
Recall that $V_{i,t-}$ (resp. $H_{i,t}$), $i \in \calC$ and $t \in  \calT$, denotes the unfinished (resp. total arrival) workload just before the beginning of (resp. in the beginning of) slot $t$ in view of type $i$.
In Figure~\ref{fig:samplepathwl}, we can see that
\[
V_{a,0-}(\omega) = 0, \ V_{a,1-}(\omega) = 2, \ V_{a,2-}(\omega) = 3, \
H_{a,0}(\omega) = 4, \ H_{a,1}(\omega) = 4.
\]
\begin{figure}[H]
	\centering
	\includegraphics[width=0.6\linewidth]{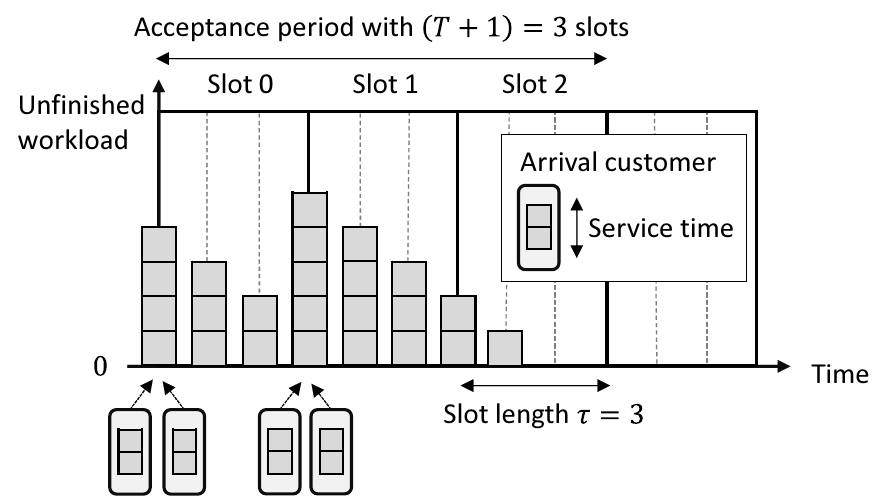}
	\caption{A sample path of the unfinished workload corresponding to a belief of type $a$, where $\tau = 3$, $T = 2$ and with the belief of  deterministic service time of length $2$.}
	\label{fig:samplepathwl}
\end{figure}

The mean unfinished workload with customer's type $i \in \calC$ is computed as follows. 
\begin{lemma} 
For $i \in \calC$ and $t \in \calT$, we have
\begin{align} 
\label{eqn:Mean_unfinished_WL}
\E[V_{i,t-}] = \sum_{u=0}^{t-1} \left\{ (\lambda_{a} p_{a,u} + \lambda_{b} p_{b,u}) \chi_{i} - \tau+ \sum_{k=0}^{\tau-1} (\tau-k) v_{i,u} * h_{i,u} (k) \right\}, 
\end{align}
where $v_{i,u} * h_{i,u} (k) := \sum_{\ell = 0}^{k} v_{i,u}(\ell) h_{i,u}(k-\ell)$, 
and the probability distribution of the unfinished workload $(v_{i,t}(k) ; k \ge 0)$ is recursively calculated by 
\begin{align} 
\label{eqn:PDF_of_v_{i,t}}
v_{i,t}(k) = 
\begin{cases}
 \sum_{\ell = 0}^{\tau} v_{i,t-1} * h_{i,t-1} (\ell), & k = 0\ , \\
 v_{i,t-1} * h_{i,t-1} (\tau + k), & k \ge 1\ .
\end{cases}
\end{align}
\end{lemma}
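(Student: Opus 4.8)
The plan is to analyze the discrete-time workload process slot-by-slot, tracking how the unfinished workload evolves across the boundaries between consecutive slots. The key object is the workload just before a slot, $V_{i,t-}$, and I would relate $V_{i,(t+1)-}$ to $V_{i,t-}$ by accounting for (a) the compound-Poisson work $H_{i,t}$ that arrives at the start of slot $t$, and (b) the deterministic draining of one unit of work per time unit over the $\tau$ time units that make up the slot. The total work present at the start of slot $t$ (after arrivals but before service) is $V_{i,t-}+H_{i,t}$, whose distribution is the convolution $v_{i,t}*h_{i,t}$. Over the slot, the server removes at most $\tau$ units of work, so the workload at the start of the next slot is $(V_{i,t-}+H_{i,t}-\tau)^+$. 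This immediately suggests that the recursion \eqref{eqn:PDF_of_v_{i,t}} is just the distributional statement of this reflected random walk step.

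First I would establish \eqref{eqn:PDF_of_v_{i,t}}. Writing $Z_t := V_{i,t-}+H_{i,t}$ with distribution $v_{i,t}*h_{i,t}$, the next-slot workload is $V_{i,(t+1)-} = (Z_t - \tau)^+$. For $k\ge 1$ this gives $v_{i,t+1}(k)=\P(Z_t-\tau=k)=v_{i,t}*h_{i,t}(\tau+k)$, and for $k=0$ it gives $v_{i,t+1}(0)=\P(Z_t\le\tau)=\sum_{\ell=0}^{\tau}v_{i,t}*h_{i,t}(\ell)$, which is exactly the claimed two-case formula (after reindexing $t+1\to t$). The base case $v_{i,0}(k)=\1(k=0)$ is given since no customers arrive before slot $0$. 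The only subtlety to check is that work arrives at the very start of the slot so that a full $\tau$ units of service capacity act on it; I would state this explicitly as a consequence of the model's convention that arrivals occur at the beginning of each slot.

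For the mean-workload formula \eqref{eqn:Mean_unfinished_WL} I would take expectations of the identity $V_{i,(t+1)-} = V_{i,t-}+H_{i,t}-\tau+(\tau - Z_t)^+$, using the elementary decomposition $(Z-\tau)^+ = Z-\tau+(\tau-Z)^+$. Taking expectations, $\E[V_{i,(t+1)-}] - \E[V_{i,t-}] = \E[H_{i,t}] - \tau + \E[(\tau - Z_t)^+]$. Here $\E[H_{i,t}]=(\lambda_a p_{a,t}+\lambda_b p_{b,t})\chi_i$ by Wald's identity applied to the compound Poisson sum, and $\E[(\tau - Z_t)^+]=\sum_{k=0}^{\tau-1}(\tau-k)\,\P(Z_t=k)=\sum_{k=0}^{\tau-1}(\tau-k)\,v_{i,t}*h_{i,t}(k)$ since $(\tau-k)^+$ is nonzero only for $k\le\tau-1$. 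Summing this one-step increment telescopically from $u=0$ to $t-1$, and using $\E[V_{i,0-}]=0$, yields precisely \eqref{eqn:Mean_unfinished_WL}.

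I expect the main obstacle to be purely a matter of careful bookkeeping rather than conceptual difficulty: getting the reflection identity and the index ranges exactly right, in particular the boundary term $\E[(\tau-Z_t)^+]$ and confirming that the correction term $(\tau-k)$ is weighted by $v_{i,t}*h_{i,t}(k)$ over $k=0,\dots,\tau-1$. I would also take care to justify Wald's identity for the compound Poisson arrival work and to confirm that the convention ``arrivals occur in the beginning of the slots'' means the newly arrived work is eligible for service during the entire slot, so that the full capacity $\tau$ is subtracted. Once these conventions are pinned down, both \eqref{eqn:PDF_of_v_{i,t}} and \eqref{eqn:Mean_unfinished_WL} follow by a single induction on $t$ together with the telescoping sum.
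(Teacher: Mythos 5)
Your proposal is correct and follows essentially the same route as the paper: both start from the Lindley-type recursion $V_{i,t-}=(V_{i,(t-1)-}+H_{i,t-1}-\tau)^{+}$, use the identity $(Z-\tau)^{+}=Z-\tau+(\tau-Z)^{+}$ together with the independence of $V_{i,(t-1)-}$ and $H_{i,t-1}$ to express the boundary correction as $\sum_{k=0}^{\tau-1}(\tau-k)\,v_{i,t-1}*h_{i,t-1}(k)$, and then telescope. Your explicit mention of Wald's identity for $\E[H_{i,t}]$ and of the base case $\E[V_{i,0-}]=0$ only makes explicit what the paper leaves implicit.
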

\begin{proof} 
Since in each slot, the unfinished work (if any) is processed by at most $\tau$ units, i.e.,
\begin{align} 
\label{eqn:Def_of_V_{i,t-}}
V_{i,t-} = (V_{i,(t-1)-} + H_{i,t-1} - \tau)^{+}, \quad i \in \calC, \ t \in \calT\ ,
\end{align}
then 
\begin{align*} 
\E[V_{i,t-}]
&= 
\E[(V_{i,(t-1)-} + H_{i,t-1} - \tau) \1(V_{i,(t-1)-} + H_{i,t-1} \ge \tau)]\\
&= 
\E[V_{i,(t-1)-} + H_{i,t-1} - \tau] + \E[\{ \tau - (V_{i,(t-1)-} + H_{i,t-1}) \} \1(V_{i,(t-1)-} + H_{i,t-1} \le \tau-1)]\ . 
\end{align*}
Since $V_{i,(t-1)-}$ and $H_{i,t-1}$ are independent, the second term in the last equation is given by
\begin{align*} 
\E[\{ \tau - (V_{i,(t-1)-} + H_{i,t-1}) \} \1(V_{i,(t-1)-} + H_{i,t-1} \le \tau-1)]
= 
\sum_{k=0}^{\tau-1} (\tau-k) v_{i,t-1} * h_{i,t-1} (k)\ . 
\end{align*}
By combining these equations, we obtain \eqref{eqn:Mean_unfinished_WL}.
Equation \eqref{eqn:PDF_of_v_{i,t}} immediately follows by \eqref{eqn:Def_of_V_{i,t-}}.
\end{proof}

By Definition~\ref{def:NE}, the arrival distributions in equilibrium are given as follows.
\begin{lemma} \label{lemma:NE_Arrival_Distributions}
A pair of arrival distributions $(\vc{p}_{a},\vc{p}_{b})$ is a symmetric (within types) Nash equilibrium if and only if there exist positive numbers $\ol{w}_{i}$ ($i \in \calC$) such that
\begin{align}
\label{eqn:NE_Arrival_Distributions} 
p_{i,t} = \frac{1}{\lambda_{i}} 
\left(
\frac{2}{\chi_{i}} (\ol{w}_{i} - \E[V_{i,t-}]) - \lambda_{-i} p_{-i,t} 
\right)^{+}, \quad i \in \calC, \ t \in \calT\ ,
\end{align}
where $-i$ denotes the counterpart of $i$, i.e., if $i=a$, then $-i = b$, and vice versa.
\end{lemma}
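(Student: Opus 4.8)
The plan is to characterize the equilibrium by translating Definition \ref{def:NE} into an explicit formula for the mean waiting time faced by a type $i$ customer choosing slot $t$, and then to invert the resulting relation to solve for $p_{i,t}$. First I would compute the expected waiting time $w_i(t)$ of a tagged type $i$ customer arriving in slot $t$. This customer faces the unfinished workload $V_{i,t-}$ present just before the slot, plus the work of those type-$a$ and type-$b$ arrivals in the same slot who are ordered ahead of the tagged customer. Because arrivals within a slot are ordered uniformly at random, and because the total arrival count in slot $t$ is Poisson with mean $\lambda_a p_{a,t} + \lambda_b p_{b,t}$, the expected amount of same-slot work ahead of the tagged customer should work out to $\tfrac{1}{2}(\lambda_a p_{a,t} + \lambda_b p_{b,t})\chi_i$ (half of the expected same-slot work, each job contributing mean $\chi_i$ under belief $i$). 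Hence I expect
\begin{align*}
w_i(t) = \E[V_{i,t-}] + \tfrac{1}{2}(\lambda_a p_{a,t} + \lambda_b p_{b,t})\chi_i\ ,
\end{align*}
where the tagged customer's own service time does not count as waiting.

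Next I would impose the two equilibrium requirements from Definition \ref{def:NE}. The indifference condition says that on the support of $F_i$ the waiting time is a constant, which I name $\ol{w}_i$; the optimality condition says $w_i(t) \ge \ol{w}_i$ off the support. Combining these, for every slot $t$ the arrival probability $p_{i,t}$ is positive precisely when the ``demand level'' $\ol{w}_i$ exceeds the cost $\E[V_{i,t-}] + \tfrac{1}{2}\lambda_{-i} p_{-i,t}\chi_i$ contributed by the baseline workload and the other type, and in that case $p_{i,t}$ is determined by forcing $w_i(t) = \ol{w}_i$. Solving the indifference equation $\ol{w}_i = \E[V_{i,t-}] + \tfrac{1}{2}(\lambda_i p_{i,t} + \lambda_{-i} p_{-i,t})\chi_i$ for $p_{i,t}$ gives
\begin{align*}
p_{i,t} = \frac{1}{\lambda_i}\left( \frac{2}{\chi_i}(\ol{w}_i - \E[V_{i,t-}]) - \lambda_{-i} p_{-i,t} \right)\ ,
\end{align*}
and taking the positive part enforces $p_{i,t}=0$ exactly on the slots where the unconstrained value would be negative, which is precisely the region where $w_i(t) > \ol{w}_i$. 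This yields the claimed identity \eqref{eqn:NE_Arrival_Distributions}, and positivity of $\ol{w}_i$ follows since $\ol{w}_i$ is an attained waiting time and at least one slot is used.

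I would present the argument as an equivalence: assuming the fixed point relation holds with positive $\ol{w}_i$, I would verify both Nash conditions by reading off that $w_i(t)=\ol{w}_i$ on the support and $w_i(t)\ge \ol{w}_i$ elsewhere; conversely, assuming $(\vc{p}_a,\vc{p}_b)$ is an equilibrium, I would set $\ol{w}_i$ to the common support value and derive the formula. The main subtlety, and the step I expect to need the most care, is the derivation of the same-slot ``half-work ahead'' term: I must argue carefully that the mean service-time contribution of the random number of co-arriving customers ordered ahead of the tagged customer is $\tfrac12(\lambda_a p_{a,t}+\lambda_b p_{b,t})\chi_i$, using the Poissonian count together with the uniform random ordering and the belief-dependent mean service time $\chi_i$. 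This parallels the continuous-time computation in \eqref{eq:queue_exp} but requires the discrete-slot workload bookkeeping rather than a queue-length/rate argument, and it is where an off-by-a-factor error would most easily slip in.
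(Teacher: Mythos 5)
Your proposal is correct and follows essentially the same route as the paper: you derive the same waiting-time formula $w_{i,t} = \E[V_{i,t-}] + \tfrac{1}{2}(\lambda_a p_{a,t}+\lambda_b p_{b,t})\chi_i$ (the paper's equation \eqref{eqn:w_{i,t}}), impose the indifference/optimality conditions of Definition \ref{def:NE}, and solve for $p_{i,t}$ with the positive part encoding the off-support inequality. The half-work-ahead term you flag as the delicate step is exactly the one the paper also asserts without further elaboration, relying on the uniform random ordering of same-slot arrivals.
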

\begin{proof} 
For $i\in \calC$ and $t \in \calT$, let $w_{i,t}$ denote the expected waiting time of a tagged type $i$ arrival customer if she/he chooses slot $t$. From Definition~\ref{def:NE}, $(\vc{p}_{a},\vc{p}_{b})$ is an equilibrium if there exist positive numbers $\ol{w}_{i}$ ($i \in \calC$) such that
\begin{align} 
\label{eqn:NE_Eq1}
 w_{i,t} &= \ol{w}_{i}, \quad t \in \sigma(\vc{p}_{i})\ ,\\
\label{eqn:NE_Eq2}
 w_{i,t} &\geq \ol{w}_{i}, \quad t \notin \sigma(\vc{p}_{i})\ .
\end{align}
Since
\begin{align} 
\label{eqn:w_{i,t}}
 w_{i,t} = \E[V_{i,t-}] + \frac{\lambda_{a} p_{a,t} + \lambda_{b} p_{b,t}}{2} \chi_{i},
 \quad i \in \calC, \ t \in \calT\ ,
\end{align}
then equations \eqref{eqn:NE_Eq1} and \eqref{eqn:NE_Eq2} are rewritten by
\begin{equation}\label{eq:NE_p+}
p_{i,t} = \frac{1}{\lambda_{i}} 
\left(
 \frac{2}{\chi_{i}} (\ol{w}_{i} - \E[V_{i,t-}]) - \lambda_{-i} p_{-i,t} 
\right), \quad t \in \sigma(\vc{p}_{i})\ ,
\end{equation}
and 
\[
p_{i,t}=0 \geq \frac{1}{\lambda_{i}} 
\left(
 \frac{2}{\chi_{i}} (\ol{w}_{i} - \E[V_{i,t-}]) - \lambda_{-i} p_{-i,t} 
\right), \quad t \notin \sigma(\vc{p}_{i})\ ,
\]
respectively, which imply \eqref{eqn:NE_Arrival_Distributions}.
\end{proof}
%

\subsection{Algorithm for computing equilibrium distributions}\label{sec:algo_NE}

Lemma \ref{lemma:NE_Arrival_Distributions} provides a recursive relation in the form of Equation \eqref{eqn:NE_Arrival_Distributions} that allows the iterative computation of the probability to arrive in every time slot $t \in \calT$. For every type $i\in\mathcal{C}$ this recursion relies on a given distribution for the other type $j\neq i$. We leverage this structure to derive a best-response type algorithm that fixes the distribution of one type $i$ in each iteration and then computes the distribution of the other type $j$, and then repeating this procedure for the $j$ given the computed distribution for $i$. A point of convergence, if it converges to some point, is clearly a Nash equilibrium, although we do not argue that the algorithm is guaranteed to converge. In practice, this procedure finds equilibrium points very efficiently.

For a fixed $i\in \calC$, assume that the arrival distribution of type $-i$ customers is given by $\vc{p}_{-i}$. Then the best response of type $i$ customers is computed by a discretized (with accuracy parameter $\epsilon>0$) search procedure that we call Algorithm~1, which is similar to the methods applied in \cite{HK2011} and \cite{SMF2019}. Algorithm 1 is detailed in Appendix \ref{sec:appn_B}, and here we treat it as a function $\mathbf{Alg.1}(\vc{p}_{-i}, \vc{\lambda}, \vc{x}_{i}, \epsilon)\mapsto \vc{p}_{i}$ that returns a probability distribution $\vc{p}_{i}$ that satisfies \eqref{eqn:NE_Arrival_Distributions} for any distribution of the other type $\vc{p}_{-i}$, system parameters $\vc{\lambda} := (\lambda_{a},\lambda_{b})$, $\vc{x}_{i}$, and accuracy parameter $\epsilon>0$. 

Note that best-response here refers to all customers of type $i$ using the same distribution, and not to an individual best response for an individual customer of type $i$. This is then used iteratively in Algorithm~2 to find equilibrium arrival distributions, i.e., to find positive numbers $(\ol{w}_{a},\ol{w}_{b})$ and the corresponding arrival distributions $(\vc{p}_{a},\vc{p}_{b})$ jointly satisfying \eqref{eqn:NE_Arrival_Distributions}, which was shown to be equivalent to the Definition \ref{def:NE} of a Nash equilibrium in Lemma \ref{lemma:NE_Arrival_Distributions}. 

We next propose an iterated approximate best-response algorithm to search for a pair of equilibrium arrival distributions, where the accuracy of the approximation depends on the specified accuracy parameters $\epsilon$ and $\delta$. 

\begin{algorithm}[H]\label{algo:NE_BR}
\renewcommand{\thealgorithm}{}
\caption{\textbf{2}: Iterated best response}
\textbf{Input}: $\vc{\lambda}:=(\lambda_{a},\lambda_{b})$, $\vc{x} := (\vc{x}_{a}, \vc{x}_{b})$, $\epsilon > 0$, $\delta>0$ \\
\textbf{Output}: $(\vc{p}_{a}^{e},\vc{p}_{b}^{e})$ (equilibrium)
\begin{algorithmic}
\State init $\vc{p}_{a}^{0} := (1,0,\ldots,0)$, $\vc{p}_{b}^{0} := (1,0,\ldots,0)$
\State init $k := 0$, $\Delta :=\delta$
\While{$\Delta \geq \delta$}
	\State set $\vc{p}_{a}^{k+1} := \mathbf{Alg.1}(\vc{p}_{b}^{k}, \vc{\lambda}, \vc{x}_{a}, \epsilon)$, $\vc{p}_{b}^{k+1} := \mathbf{Alg.1}(\vc{p}_{a}^{k+1}, \vc{\lambda}, \vc{x}_{b}, \epsilon)$
	\State set $\Delta := \max\{||\vc{p}_{a}^{k+1}-\vc{p}_{a}^{k}||, ||\vc{p}_{b}^{k+1}-\vc{p}_{b}^{k}||\}$
\State set $k := k+1$
\EndWhile
\State set $(\vc{p}_{a}^{e},\vc{p}_{b}^{e}) := (\vc{p}_{a}^{k},\vc{p}_{b}^{k})$
\State \textbf{return} {$(\vc{p}_{a}^{e},\vc{p}_{b}^{e})$}
\end{algorithmic}
\end{algorithm}

\begin{remark}\label{rem:BR_convergence}
Note that any point of convergence of Algorithm 2 is an equilibrium because no single customer can deviate and obtain a lower expected waiting time by unilaterally changing the arrival distribution. However, we do not provide a proof of convergence of the algorithm. Best response dynamics are known to converge for games with very specific utility functions, such as submodular games (see \cite{T1979}) and potential games (see \cite{MS1996}), but for a system with elaborate and non-explicit dynamics, as is the case here, convergence guarantees are very hard to prove (or may not hold). Nevertheless, because any convergence point is an equilibrium, such algorithms are very useful in computing equilibrium points even in cases where there is no theoretical guarantee. Indeed, in our numerical testing and examples an equilibrium point was reached for every instance of the arrival game tested. Figure~\ref{fig:2slotsbrexample} illustrates an numerical example for the best response of type $i$ ($i = a,b$) customers against $\vc{p}_{-i}$ in the case of $T+1 = 2$, i.e., the acceptance period consists of two slots.  This figure presents a typical path of the iterated best-response algorithm and how it converges to a pair of equilibrium arrival distributions.
\begin{figure}[h]
	\centering
	\includegraphics[width=0.7\linewidth]{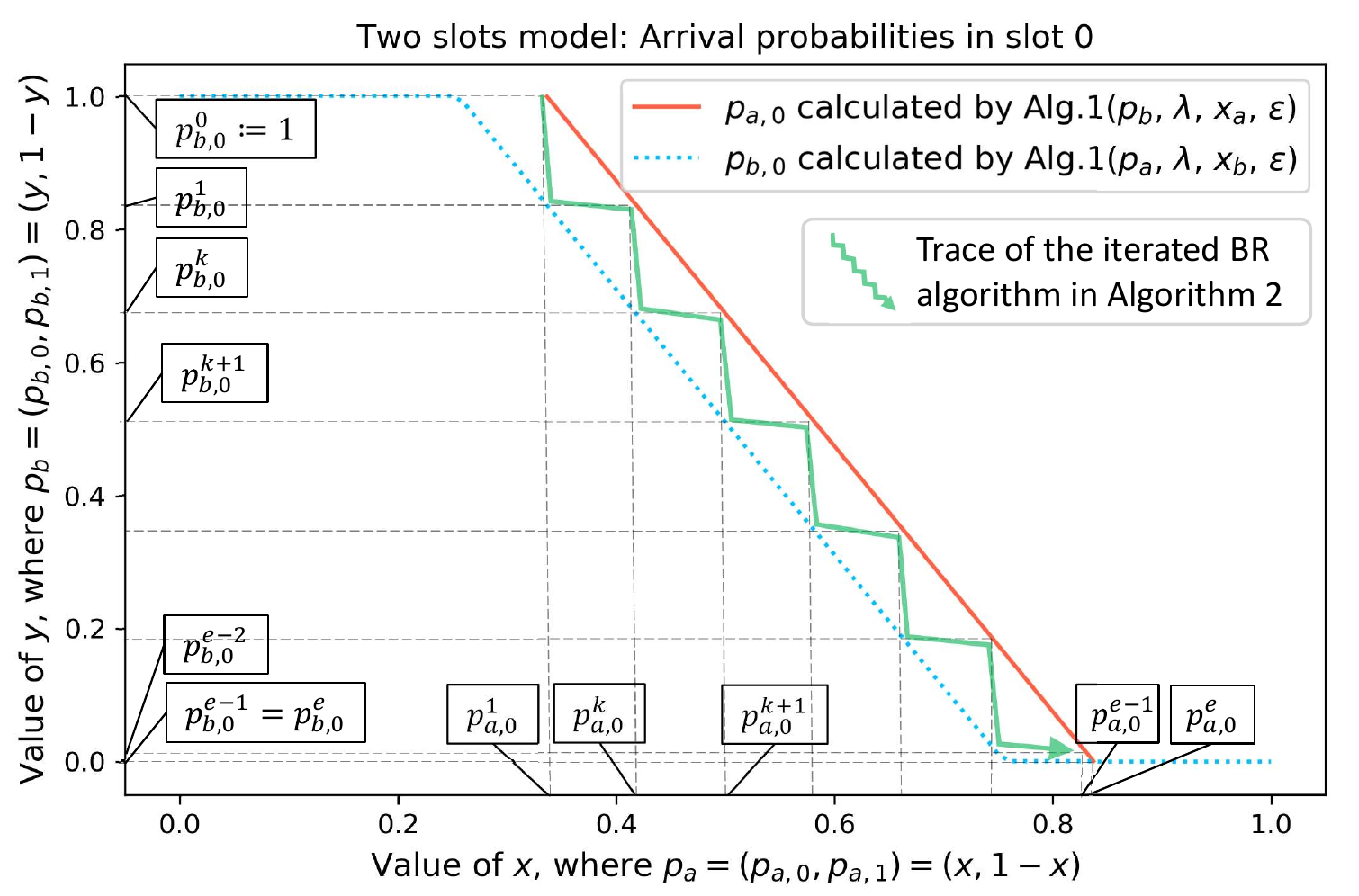}
	\caption{An example for the best response of type $i$ ($i = a,b$) customers against $\vc{p}_{-i}$, where $(T+1, \tau, \lambda_{a}, \lambda_{b}, \chi_{a}, \chi_{b}) = (2, 30, 10, 5, 3, 2)$  and each service time follows a geometric distribution.}
	\label{fig:2slotsbrexample}
\end{figure}
\end{remark}

\subsection{Existence of equilibrium arrival distributions}\label{sec:discrete_NE_existence}

In \cite{SMF2019} it was shown that an equilibrium exists in the single-type game. Furthermore, it was conjectured that the equilibrium is unique, and this conjecture was strengthened by numerical analysis. The existence result can be extended to the two-type game by applying Kakutani's fixed point theorem. The issue of uniqueness is an open problem even for the single-type game, but there is reason to believe the conjecture holds in that case. In the multi-class setting it is less clear if the solution should be unique. Recall that for the deterministic fluid model Theorem \ref{thm:NE_fluid} showed that multiple equilibria are possible for some parameter settings.

\begin{proposition}\label{prop:existence}
A symmetric (within type) pair of equilibrium arrival distributions $(\vc{p}_{a}^{e},\vc{p}_{b}^{e})$ exists for any game parameters. 
\end{proposition}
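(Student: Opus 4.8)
The plan is to recast the search for a symmetric equilibrium as a fixed-point problem on the product of strategy simplices and then invoke Kakutani's fixed point theorem, in the spirit of the classical proof of Nash's theorem for finite games. A (within-type) mixed strategy for type $i$ is a probability vector $\vc{p}_i=(p_{i,t};t\in\calT)$ in the simplex $\Delta:=\{\vc{p}\in\bbR^{T+1}:p_t\ge 0,\ \sum_{t\in\calT}p_t=1\}$, so the joint strategy space $\Delta\times\Delta$ is a non-empty, compact, convex subset of $\bbR^{2(T+1)}$. For a fixed profile $(\vc{p}_a,\vc{p}_b)$ the expected waiting time of a tagged type $i$ customer choosing slot $t$ is $w_{i,t}$ as in \eqref{eqn:w_{i,t}}. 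Because the population is Poisson, a tagged customer's own choice leaves the conditional law of the other customers unchanged, so an individual best response is simply to pick a slot in $\argmin_{t\in\calT}w_{i,t}(\vc{p}_a,\vc{p}_b)$; consequently a symmetric profile is an equilibrium in the sense of Definition~\ref{def:NE} (equivalently Lemma~\ref{lemma:NE_Arrival_Distributions}) exactly when, for each $i\in\calC$, the support of $\vc{p}_i$ is contained in this argmin set.

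This motivates the correspondence
\[
\Phi(\vc{p}_a,\vc{p}_b):=\Phi_a(\vc{p}_a,\vc{p}_b)\times\Phi_b(\vc{p}_a,\vc{p}_b),\qquad
\Phi_i(\vc{p}_a,\vc{p}_b):=\Big\{\vc{q}\in\Delta:\ q_t=0\ \text{for}\ t\notin\textstyle\argmin_{s}w_{i,s}(\vc{p}_a,\vc{p}_b)\Big\},
\]
whose fixed points are precisely the symmetric equilibria. Verifying Kakutani's hypotheses is then routine save for one analytic input. For each fixed profile the argmin set is a non-empty subset of the finite set $\calT$, and $\Phi_i(\vc{p}_a,\vc{p}_b)$ is the corresponding face of $\Delta$, hence non-empty and convex. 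For the closed-graph (upper-hemicontinuity) property I would first establish that each $w_{i,t}$ is continuous in $(\vc{p}_a,\vc{p}_b)$; granting this, if $(\vc{p}^{(n)})\to\vc{p}$ and $\vc{q}^{(n)}\in\Phi_i(\vc{p}^{(n)})$ with $\vc{q}^{(n)}\to\vc{q}$, then every $t$ with $q_t>0$ satisfies $w_{i,t}(\vc{p}^{(n)})=\min_s w_{i,s}(\vc{p}^{(n)})$ for all large $n$, and passing to the limit (the minimum of finitely many continuous functions is continuous) gives $t\in\argmin_s w_{i,s}(\vc{p})$, so $\vc{q}\in\Phi_i(\vc{p})$. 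Kakutani's theorem then delivers a fixed point $(\vc{p}_a^e,\vc{p}_b^e)\in\Phi(\vc{p}_a^e,\vc{p}_b^e)$, which by construction meets the equilibrium conditions of Definition~\ref{def:NE}.

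The single step that requires genuine work is the continuity of $w_{i,t}$, i.e.\ of $\E[V_{i,t-}]$, in the arrival profile. The same-slot term $\tfrac12(\lambda_a p_{a,t}+\lambda_b p_{b,t})\chi_i$ in \eqref{eqn:w_{i,t}} is linear, so the issue is the recursively defined workload mean \eqref{eqn:Mean_unfinished_WL}. I would argue by induction on the slot index $u$ that, for each fixed level $k$, the probabilities $v_{i,u}(k)$ depend continuously on $(\vc{p}_a,\vc{p}_b)$: since service times are positive integer valued, the compound-Poisson mass $h_{i,u}(k)$ is a \emph{finite} sum of terms of the form $e^{-\theta_u}\theta_u^{n}\,x_i^{*n}(k)/n!$ with $\theta_u:=\lambda_a p_{a,u}+\lambda_b p_{b,u}$, hence real-analytic in $\theta_u$ and thus continuous in the (linearly dependent) profile, while the recursion \eqref{eqn:PDF_of_v_{i,t}} and the convolution $v_{i,u}*h_{i,u}$ only combine finitely many such continuous masses by finite sums and products. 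Because the horizon $T$ is finite, the finitely many quantities $v_{i,u}*h_{i,u}(k)$ with $0\le k\le\tau-1$, $0\le u\le t-1$ appearing in \eqref{eqn:Mean_unfinished_WL} require $v_{i,u}(k)$ only for $k$ up to a bound of order $(T+1)\tau$, all continuous by the induction; hence $\E[V_{i,t-}]$, and therefore $w_{i,t}$, is continuous.

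I expect the main (though not deep) obstacle to be precisely this bookkeeping: tracking which finitely many coordinates of the successive workload distributions feed into the finite-horizon mean, so that the induction stays within a bounded, finite collection of continuous coordinate functions. Everything else — compactness and convexity of $\Delta\times\Delta$, non-emptiness and convexity of the values of $\Phi$, and the identification of a fixed point with a Nash equilibrium — is standard once continuity is in hand. Note that, consistent with the discussion preceding the statement, this argument establishes existence only and says nothing about uniqueness, which the fluid analysis of Theorem~\ref{thm:NE_fluid} already suggests may fail.
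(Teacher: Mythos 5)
Your argument is correct, and it reaches the conclusion by a route that differs in a meaningful way from the paper's. The paper also invokes Kakutani, but applies it to the composed \emph{symmetric within-type best-response map} $\mathcal{BR}(\vc{p}_a,\vc{p}_b)=(\mathcal{BR}_a(\vc{p}_b),\mathcal{BR}_b(\vc{p}_a))$, where $\mathcal{BR}_i(\vc{p}_{-i})$ is the output of Algorithm~1, i.e.\ the distribution satisfying the single-type equilibrium condition \eqref{eqn:NE_Arrival_Distributions} when the other type is frozen; the fact that this map is well defined is imported from the existence result of \cite{SMF2019} for the single-type game, and the closed-graph property is asserted via continuity of solutions of \eqref{eqn:NE_Arrival_Distributions}. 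You instead use the textbook Nash construction: the correspondence $\Phi_i$ whose values are the faces of the simplex supported on $\argmin_t w_{i,t}$, justified by the Poisson thinning property that a tagged customer's deviation does not perturb the law of the others. Your version buys self-containedness (no reliance on the prior single-type existence theorem, no dependence on a particular algorithmic selection when the within-type best response might be non-unique, which could otherwise threaten the closed-graph claim for a single-valued selection) at the cost of having to verify continuity of $w_{i,t}$ explicitly; your induction through \eqref{eqn:PDF_of_v_{i,t}} and \eqref{eqn:Mean_unfinished_WL}, using that positive integer-valued service times make each compound-Poisson mass $h_{i,u}(k)$ a finite sum analytic in $\lambda_a p_{a,u}+\lambda_b p_{b,u}$, is the right way to do this and is sound. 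The paper's version is shorter on the page but leans on external results; yours is the more robust and standard existence argument, and, as you note, neither says anything about uniqueness.
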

\begin{proof}
For type $i\in\mathcal{C}$ customers, given any arrival distribution $\vc{p}_{-i}$ of customers with the other type, denote the outcome of Algorithm~1 by $\mathcal{BR}_i(\vc{p}_{-i}):[0,1]^{T+1}\to[0,1]^{T+1}$. The algorithm always returns a valid distribution due to the existence result of \cite{SMF2019} for the single-type game. Let 
\[
\mathcal{BR}(\vc{p}_a,\vc{p}_b):=(\mathcal{BR}_a(\vc{p}_b),\mathcal{BR}_b(\vc{p}_a)):[0,1]^{T+1}\times[0,1]^{T+1}\to[0,1]^{T+1}\times[0,1]^{T+1} \ ,
\]
denote the two dimensional mapping of the pair of distributions $(\vc{p}_a,\vc{p}_b)$ to their respective $\mathcal{BR}_i$ sets. By applying Kakutani's fixed point theorem (e.g. Lemma 20.1 of \cite{book_OR1994}) we can conclude that $\mathcal{BR}$ has a fixed point $(\vc{p}_a,\vc{p}_b)=\mathcal{BR}(\vc{p}_a,\vc{p}_b)$. In particular:
\begin{enumerate}
\item[(i)] $[0,1]^{T+1}$ is a compact and convex set,
\item[(ii)] $\mathcal{BR}(\vc{p}_a,\vc{p}_b)$ is a single point in $[0,1]^{T+1}\times[0,1]^{T+1}$, hence it is trivially a convex set for any $(\vc{p}_a,\vc{p}_b)$,
\item[(iii)] and $\mathcal{BR}$ has a closed graph (because solutions of \eqref{eqn:NE_Arrival_Distributions} are continuous with respect to all coordinates of $\vc{p}_i$).
\end{enumerate} 
\end{proof}

\subsection{Numerical analysis}\label{sec:numerical}

In this section, we show several examples of the equilibrium arrival distributions for different service-time distributions for the discrete-time system.
The equilibrium arrival distributions are computed using Algorithm 2. 
We demonstrate that Algorithm 2 gives equilibrium arrival distributions with a similar form as the equilibrium arrival distributions of the fluid model for the non-degenerate cases in Theorem~\ref{thm:NE_fluid} of Section 4. To this end, the intervals for $T$ in Theorem~\ref{thm:NE_fluid}~(i)--(iv) are denoted by  $(0, \xi_{1}]$, $(\xi_{1}, \xi_{2})$, $[\xi_{2}, \xi_{3}]$ and $(\xi_{3}, \xi_{4}]$, respectively, where
\[
(\xi_1, \xi_2, \xi_3, \xi_4) :=
\left(
\frac{\lambda_a + \lambda_b}{2 \mu_b}, 
\frac{\lambda_a + 2 \lambda_b}{2 \mu_b}, 
\frac{\lambda_a}{2 \mu_a} + \frac{\lambda_b}{\mu_b},
\frac{\lambda_a}{\mu_a} + \frac{\lambda_b}{\mu_b}  
\right).
\]

In the following examples (see Figures~\ref{fig:deterministic}--\ref{fig:mix_of_geometric}), both the time unit and slot length are set to $1$[min]; the mean number of arriving customers with beliefs $a$ and $b$ are given by $\vc{\lambda} := (\lambda_a,\lambda_b) = (50,50)$; the pair of the mean-service times are given by $(\chi_a,\chi_b) = (4,2)$.
We consider the following three cases for the service-time distributions: $\vc{x}_{i}$ ($i \in \calC$) is a deterministic distribution (see Figure~\ref{fig:deterministic}); $\vc{x}_{i}$ ($i \in \calC$) is  a geometric distribution (see Figure~\ref{fig:geometric}); $\vc{x}_{i}$ ($i \in \calC$) is a mixture of two geometric distributions (see Figure~\ref{fig:mix_of_geometric}).
The parameters of the mixture of two geometric distributions are chosen so that their coefficient of variations (CV’s, for short) are twice of that of the geometric distributions. 
Under the above parameter settings, we have $(\xi_1,\xi_2,\xi_3,\xi_4 ) = (100,150,200,300)$.
In Figures~\ref{fig:deterministic}--\ref{fig:mix_of_geometric}, the following four cases for the length of the acceptance period are considered:
\begin{align*}
\tau \times (T+1) = 
\begin{cases}
	60 \in (0, \xi_{1}]  & (\text{i.e., Case (i) in Theorem~\ref{thm:NE_fluid}}) \\
	120 \in (\xi_{1}, \xi_{2}] & (\text{i.e., Case (ii) in Theorem~\ref{thm:NE_fluid}})\\
	180 \in (\xi_{2}, \xi_{3}] & (\text{i.e., Case (iii) in Theorem~\ref{thm:NE_fluid}})\\	
	240 \in (\xi_{3}, \xi_{4}] & (\text{i.e., Case (iv) in Theorem~\ref{thm:NE_fluid}})
\end{cases}
\end{align*}
In each case, the equilibrium arrival distributions are computed by
\begin{align}
\label{eqn:Alg_2}
(\vc{p}_{a}^{e}, \vc{p}_{b}^{e})
 = \mathbf{Alg.2}(\vc{\lambda}, \vc{x}_{a}, \vc{x}_{b}, \epsilon, \delta),
\end{align}
where $\epsilon = \delta := 10^{-5}$.

\begin{figure}[H]
	\centering
	\includegraphics[width=\linewidth]{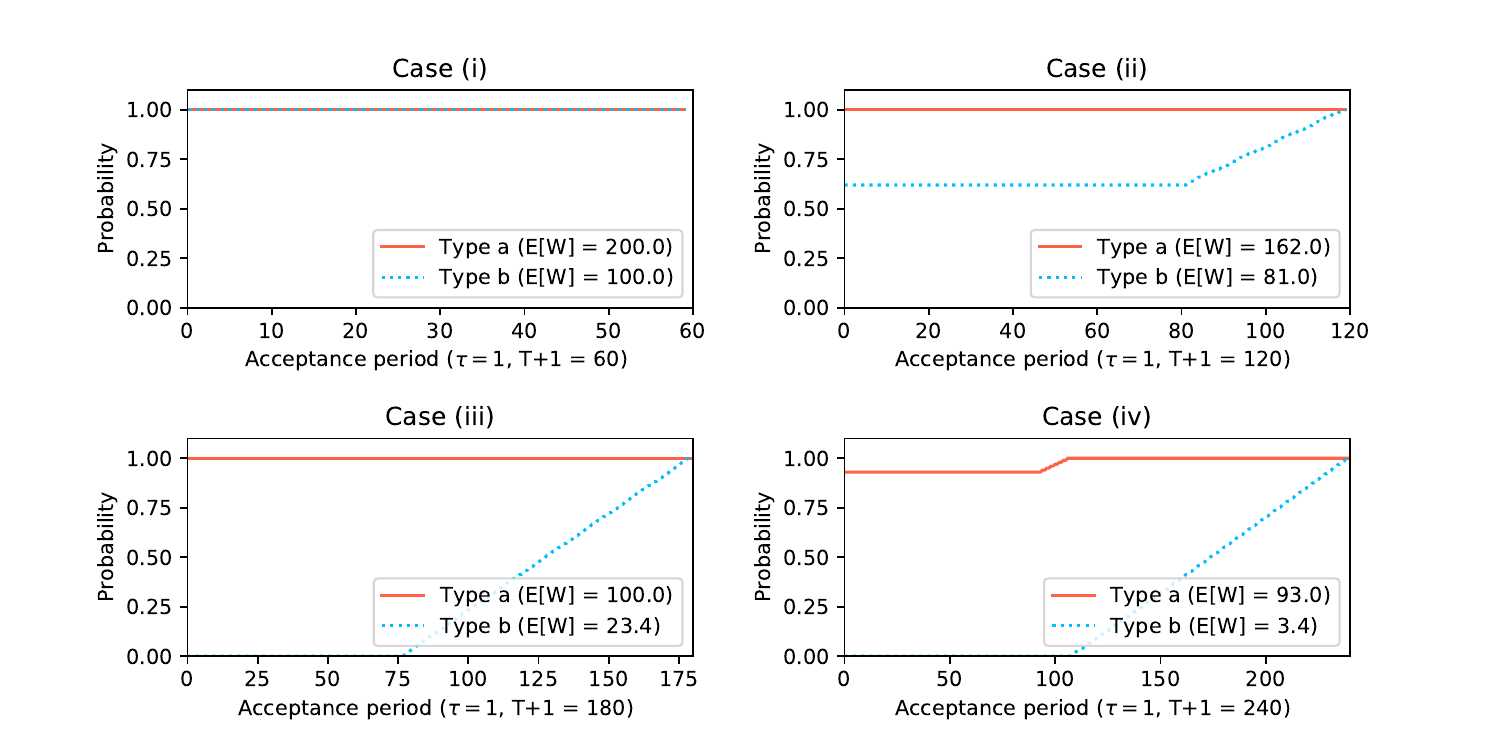}
	\caption{cdfs of the equilibrium arrival distributions computed by (\ref{eqn:Alg_2}), where the service time follows a deterministic distribution}
	\label{fig:deterministic}
\end{figure}

\begin{figure}[H]
	\centering
	\includegraphics[width=\linewidth]{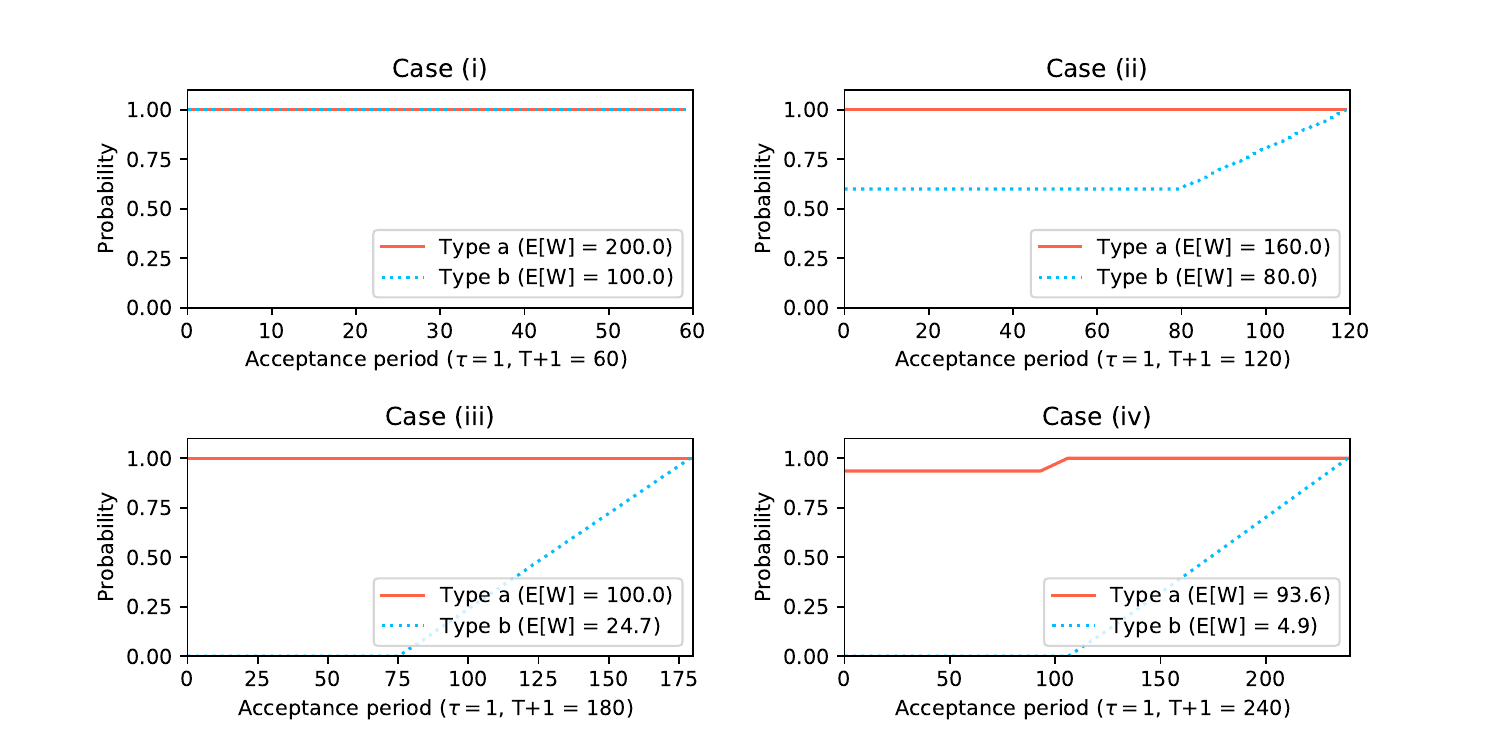}
	\caption{cdfs of the equilibrium arrival distributions computed by (\ref{eqn:Alg_2}), where the service time follows a geometric distribution, where $\text{CV}_a = 0.87$, $\text{CV}_b = 0.71$}
	\label{fig:geometric}
\end{figure}

\begin{figure}[H]
	\centering
	\includegraphics[width=\linewidth]{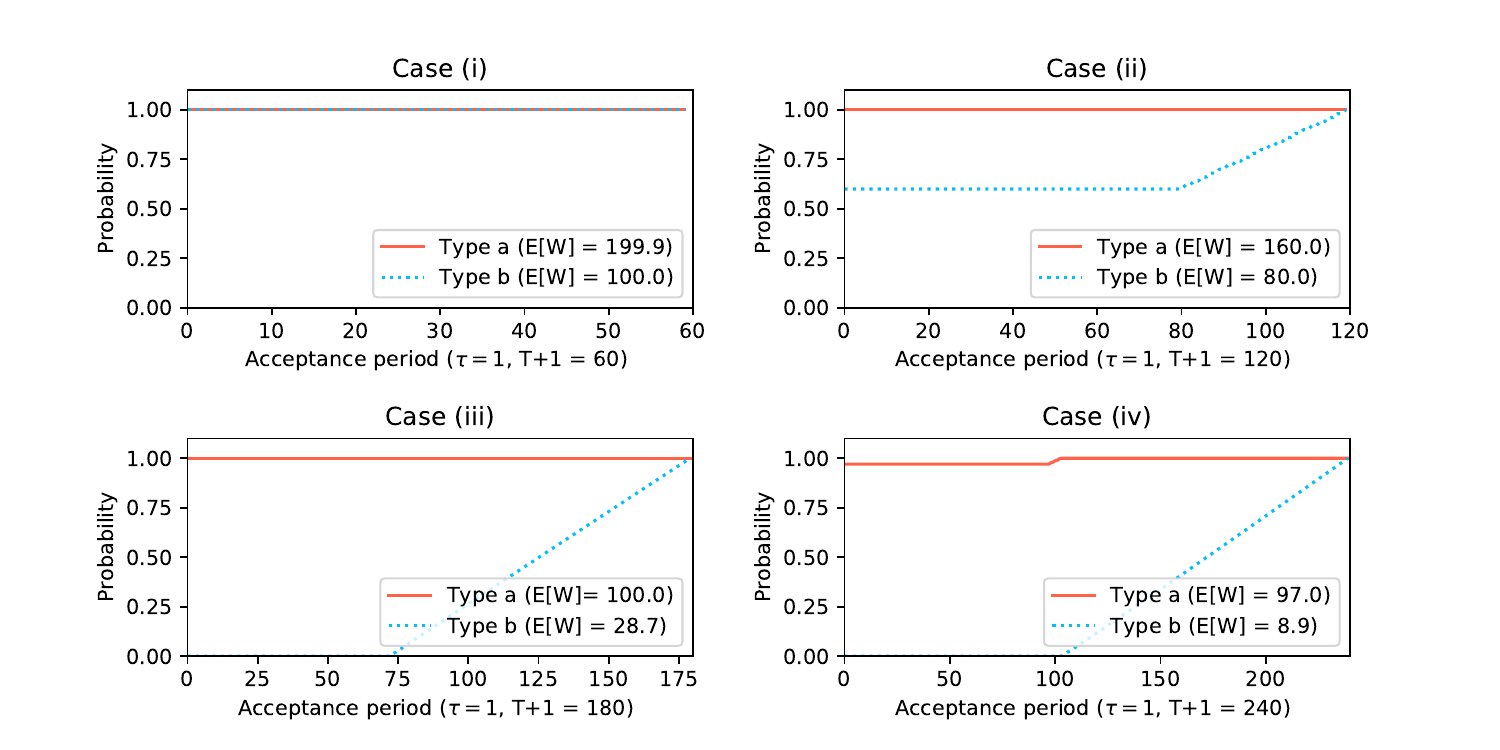}
	\caption{cdfs of the equilibrium arrival distributions computed by (\ref{eqn:Alg_2}), where the service time follows a mixtures of two geometric distributions, where $\text{CV}_a = 1.74$, $\text{CV}_b = 1.42$}
	\label{fig:mix_of_geometric}
\end{figure}

In all numerical examples Algorithm 2 converged to a unique equilibrium solution.
The cdf's of the equilibrium arrival distributions for the four cases are illustrated in Figures~\ref{fig:deterministic}--\ref{fig:mix_of_geometric}. It can be seen that the solutions have a similar form to the corresponding explicit linear solutions in cases (i)—(iv) in Theorem \ref{thm:NE_fluid} (which are illustrated in Figure \ref{fig:fluid}).  Furthermore, we can see that the expected waiting time increases as the CV of the service time gets large. 

\section{Agent-based model}\label{sec:learn}

We now present a discrete-time agent-based model (ABM for short) as a behavioral alternative to the equilibrium analysis of Section \ref{sec:discrete}. Our ABM extends that of \cite{SMF2019} to account for the scenario of server uncertainty and noisy signals as modeled in Section \ref{sec:signals}. The customers arrive to the system repeatedly and choose their arrival time slot according to a decision model that combines random exploration and minimization of the estimated waiting time according to the past delays experienced by the individual customer. We then compare its long-term averaged arrival distributions with the equilibrium calculated by Algorithm 2. Recall the information and rationality assumptions discussed in Section \ref{sec:belief}. The rational models of the previous sections corresponded to Assumptions (FR)--\textit{fully informed rational customers}, and (BR)--\textit{customers with bounded rationality or limited information. This section deals with an example of assumption (AM)--\textit{Agent based model}, i.e., with customers that have no knowledge of the system parameters (or cannot make the necessary waiting time computations, but rather react to their own past delay experiences.}  

We assume that there is a finite large pool of $N$ potential customers labeled as the numbers in $\calN := \{ 1,2,\ldots,N \}$ that join independently at any given day with a probability $\delta_N$ such that $\delta_N\to 0$ as $N\to\infty$ and $N\delta_N=\lambda$. The total arrivals in a given day is therefore $A\sim\mathrm{Bin}(N,\delta_N)$ and can be approximated by Poisson$(\lambda)$ for large $N$. This formulation enables giving customers identities which is important for the learning framework as each customer has their own personal history of waiting time observations. As in Section~\ref{sec:signals}, the system dynamics ``on each day'' is described as follows.  On day $d$ ($d \ge 1$), the service mode is denoted by $M_{d} \in \calC$, where
\[
 \P(M_{d} = a) = p, \quad \P(M_{d} = b) = 1-p\ .
\]
When the service mode $M_{d}$ is given by $i \in \calC$, during day $d$, the system offers iid service times following the distribution $\vc{x}_{i}$ with mean $\chi_{i}$. 
The signal (i.e., belief) on the service mode received by customer $k$ ($k \in \calN$), denoted by $Y_{d}^{(k)}$, is given by
\[
 Y_{d}^{(k)} = M_{d} S_{d}^{(k)} + (a + b - M_{d}) (1 - S_{d}^{(k)})\ ,
\]
where $S_{d}^{(k)} \in \{0,1\}$ be an iid Bernoulli random variable with $\P(S_{d}^{(k)} = 1) = q$. 
The server state and signals on any day are independent of each other and of the states and signals in all other days. We assume customers do not know $p$ or $q$ but they do know that $q>\frac{1}{2}$, and so the signal is informative. The acceptance period is the discrete grid $\mathcal{T}=\{0,\tau,\ldots,T\tau\}$ as in Section \ref{sec:discrete}.

On day $d\geq 1$ an arriving customer $k$ observes information $Y_{d}^{(k)} = i \in \calC$, and chooses a time slot $t_d^{(k,i)} = t \in \mathcal{T}$ according to a decision rule that will be elaborated on later, and records the waiting time $W_{d,t}^{(k,i)}$. Let $\bar{\vc{W}}_{d}^{(k)} := (\bar{W}_{d,t}^{(k,i)} ; i \in \calC, t \in \calT)$ denote the $2 \times (T+1)$ matrix of customer $k$'s average waiting times, where $\bar{W}_{d,t}^{(k,i)}$ is the average waiting time that customer $k$ with belief $i$ experiences when she/he arrives at slot $t$ during first $d$ days. Define $\bar{\vc{W}}_{0}^{(k)}$ as a zero matrix, i.e., $\bar{W}_{0,t}^{(k,i)} := 0$ for $i \in \calC$ and $t \in \calT$. The daily decisions involve randomization between past experience and random experimentation. We denote the experimentation probability as a function of trials as
\begin{equation}\label{eq:sigmoid}
\theta(x)=e^{\frac{c_1}{1-e^{c_2 x}}}\ ,
\end{equation}
where $c_1,c_2>0$ (see \cite{SMF2019} for further discussion on these learning dynamics). On day $d$, an arriving customer $k$ with belief $i$ ($i \in \calC$) chooses her/his arrival slot uniformly from $\calT$ with probability $1 - \theta(A^{(k,i)}(d))$, where $A^{(k,i)}(d)$ denotes the number of customer $k$' arrivals with belief $i$ during the first $d-1$ days. 
On the other hand, with probability $\theta(A^{(k,i)}(d))$, she/he chooses the time slot with the minimal average experienced waiting time during the first $d-1$ days.

Therefore the decision rule can be written as follows,
\begin{equation}\label{eq:decision}
t_d^{(k,i)}=\left\lbrace\begin{array}{ll}
\mathrm{Uniform}(\mathcal{T}), & \mathrm{w.p.} \quad 1 - \theta(A^{(k,i)}(d))\ , \\
\argmin_{t \in \calT} W_{d-1,t}^{(k,i)}, & \mathrm{w.p.} \quad \theta(A^{(k,i)}(d))\ .
\end{array}\right.
\end{equation}

By \eqref{eq:sigmoid} and \eqref{eq:decision} we have that as $d$ grows the probability of uniform exploration diminishes and the minimal average waiting time is chosen with an arbitrarily high probability. Consider a tagged customer $k$ and her/his experienced average waiting time $W_{d,t}^{(k,i)}$ on days with belief $i\in\mathcal{C}$ such that the specific slot $t$ has been sampled. Clearly, as the number of days grows all customers will join and receive both types of signals infinitely often. The limiting proportion of the number of times slot $t\in\mathcal{T}_\infty$ is chosen with belief $i$ is given by
\begin{equation}
\label{eqn:Arrival_Dist_DLM}
p_t^{(k,i)} := \lim_{D\to\infty} \frac{1}{\max\{ 1, A^{(k,i)}(D)\}} \sum_{d=1}^D \mathbf{1}(\text{customer $k$ arrives on day $d$}, t_d^{(k,i)} = t)\ ,
\end{equation}
and her/his limiting average waiting time during the acceptance period, denoted by $\ol{w}^{(k,i)}$, is given by
\begin{align}
\label{eqn:}
 \ol{w}^{(k,i)} = \sum_{t \in \calT} p_t^{(k,i)} \bar{W}_{t}^{(k,i)},
\end{align}
where $\bar{W}_{t}^{(k,i)} := \lim_{d \to \infty} \bar{W}_{d,t}^{(k,i)}$ for $t \in \calT$. In the subsequent two sections, we consider the averaged arrival distributions and mean waiting times over all customers as follows.
\begin{align}
\label{eqn:ABM_averaged_arrival_dist}
 \vc{\ol{p}}_{N}^{(i)} := \frac{1}{N} \sum_{k=1}^{N} (p_t^{(k,i)} ; t \in \calT)\ ,
 \qquad
 \ol{w}_{N}^{(i)} := \frac{1}{N} \sum_{k=1}^{N} \ol{w}^{(k,i)}
\end{align}
for each belief $i \in \calC$.

\subsection{Numerical comparison between ABM and bounded rationality}\label{sec:NE_no_bias_vs_ABM}

In what follows, we compare the cdf of arrival time and mean waiting time calculated by the ABM with the one calculated by Algorithm 2. In the latter computation of the Nash equilibrium we make assumption (BR) in Section~\ref{sec:belief}: customers fully trust their signal and assume all customers have the corresponding service time distribution (without the posterior update). The results which are computed by Algorithm 2 and the ABM are referred to as (BR) and (AM), respectively.

The parameter settings of the agent-based model are given as follows: the length of the acceptance period is set to $60$ minutes and is divided into $(T+1)=20$ slots, i.e., the length of each slot is given by $\tau = 3$ [min]; the mean number of population is given by $\lambda = 10$; the server uncertainty and the strength of the signal are given by $p = 0.5$ and $q = 0.9$, respectively; the mean service-times in modes $a$ and $b$ are given by $\chi_a = 4$ and $\chi_b = 2$, respectively. 
With these parameter settings, the mean number of arrival customers with beliefs $a$ and $b$ are given by $\lambda_a = \lambda (pq + (1-p)(1-q)) = 5$ and $\lambda_b = \lambda (p(1-q) + (1-p)q) = 5$, respectively.
As in Section~\ref{sec:numerical}, we consider the same three service-time distributions, i.e.,  deterministic, geometric distribution, and a mixture of two geometric distributions.
For these three distributions, Figures~\ref{fig:deterministicnewithoutbiasvsdl}--\ref{fig:mixofgeometricnewithoutbiasvsdl} respectively show the comparison between the cdf calculated by Algorithm 2 and the ones obtained by simulations of the ABM, where every customer arrives at the system $1,000$ times on average.

\begin{figure}[H]
	\centering
	\includegraphics[width = 0.9\linewidth]{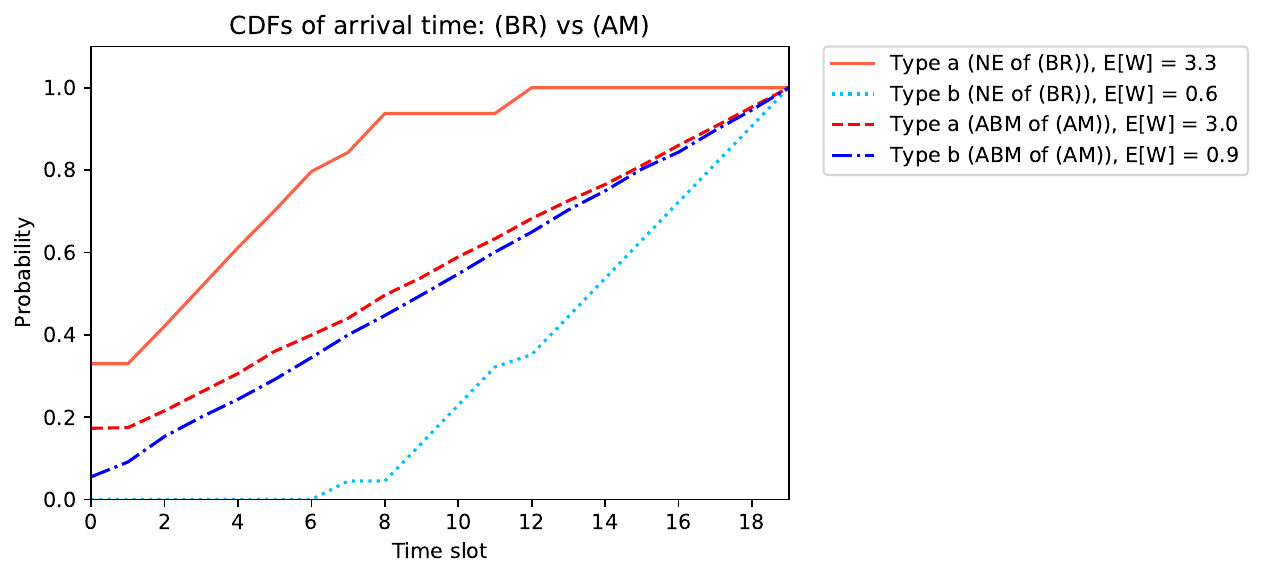}
	\caption{Comparison between cdfs of the equilibrium arrival distributions calculated by (\ref{eqn:Alg_2}) and ones obtained by the averaged arrival distributions (\ref{eqn:ABM_averaged_arrival_dist}) calculated by ABM, where the service time follows a deterministic distribution}
	\label{fig:deterministicnewithoutbiasvsdl}
\end{figure}

\begin{figure}[H]
	\centering
	\includegraphics[width = 0.9\linewidth]{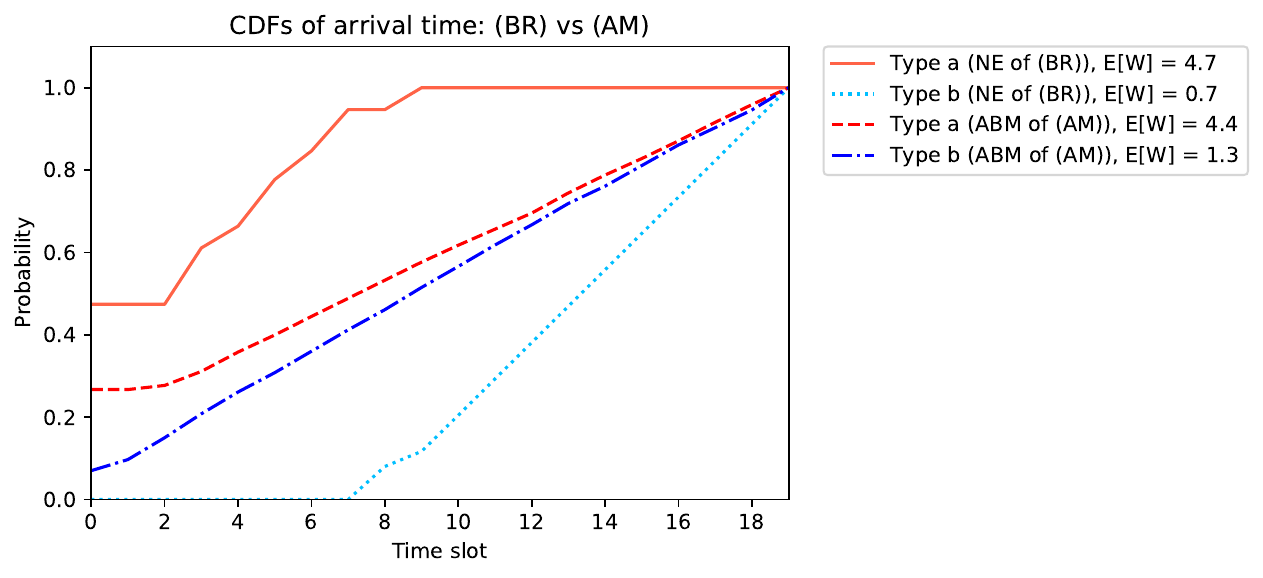}
	\caption{Comparison between cdfs of the equilibrium arrival distributions calculated by (\ref{eqn:Alg_2}) and ones obtained by the averaged arrival distributions (\ref{eqn:ABM_averaged_arrival_dist}) calculated by ABM, where the service time follows a geometric distribution}
	\label{fig:geometricnewithoutbiasvsdl}
\end{figure}

\begin{figure}[H]
	\centering
	\includegraphics[width = 0.9\linewidth]{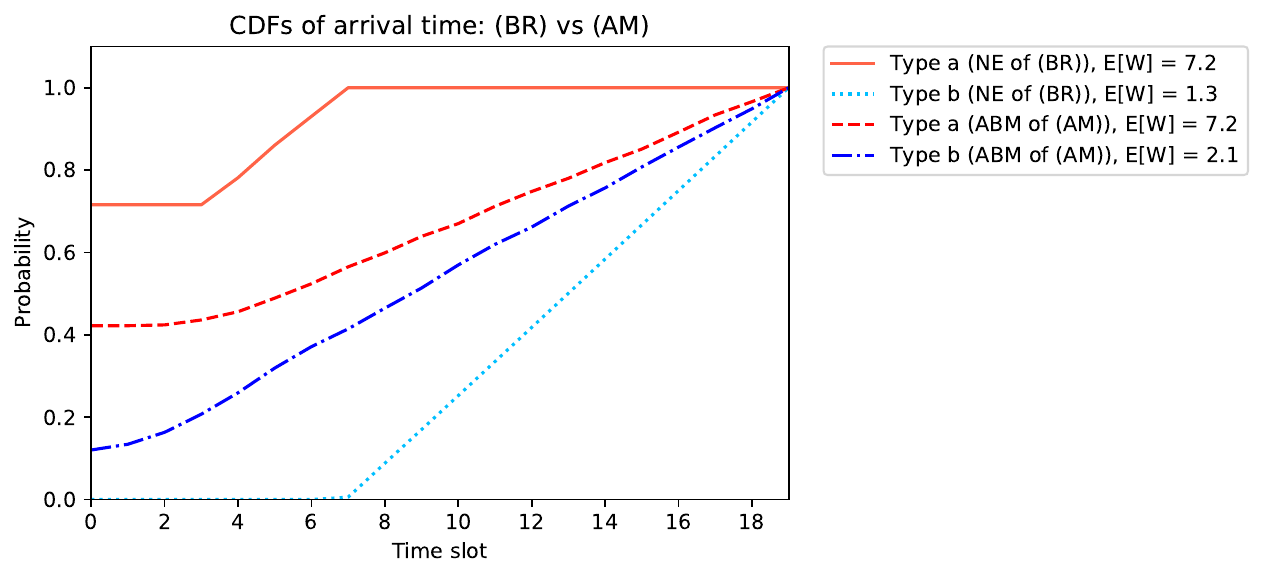}
	\caption{Comparison between cdfs of the equilibrium arrival distributions calculated by (\ref{eqn:Alg_2}) and ones obtained by the averaged arrival distributions (\ref{eqn:ABM_averaged_arrival_dist}) calculated by ABM, where the service time follows a mixtures of two geometric distributions}
	\label{fig:mixofgeometricnewithoutbiasvsdl}
\end{figure}

From Figures~\ref{fig:deterministicnewithoutbiasvsdl}--\ref{fig:mixofgeometricnewithoutbiasvsdl}, we can see the outcome is quite different, i.e., the gap between the cdfs of (BR) and (AM) is quite large for each belief ($a$ and $b$). Specifically, the averaged arrival distributions of the ABM (see (AM) in the figures) tend to spread over the acceptance period. On the other hand, for the equilibrium arrival distributions calculated by Algorithm 2 (see (BR) in the figures), the arrivals of beliefs $a$ and $b$ customers tend to the equilibrium arrival distributions of beliefs $a$ and $b$ tend to lie in the first and the last half of the acceptance period, respectively.

The above differences between the arrival distributions of the ABM and equilibrium models also have implications for social welfare. In the homogenous case, when customers are more spread out then waiting times are decreased and thus social welfare is higher (see \cite{HK2011} for detailed analysis of socially optimal arrival strategies). In the case of heterogeneous beliefs customers tend to arrive closer to their own type but further away from the other type (or on completely disjoint intervals as in the fluid model), thus the effect on social welfare may be negative or positive, depending on the exact system parameters. In Figures~\ref{fig:deterministicnewithoutbiasvsdl}--\ref{fig:mixofgeometricnewithoutbiasvsdl} we observe that in all cases the mean waiting time is higher (or equal) for type $a$ customers in the bounded rationality equilibrium than in the long term average of the ABM, while the opposite is true for type $b$ customers, i.e., a lower mean waiting time in equilibrium. For example, in Figure~\ref{fig:geometricnewithoutbiasvsdl}, the belief $b$'s mean waiting time in ABM is about twice as high than that of NE, while for type $a$ customers the mean waiting time in ABM is slightly lower than that of the NE (less than 10\% lower in the ABM). Our numerical analysis therefore suggests that in terms of social welfare, customers who just react to their past experiences can sometimes be better off than customers with (limited) knowledge of the system parameters and dynamics. A possible explanation for this gap is that in the ABM customers are eventually estimating the correct properties of the system, while the customers with bounded rationality are computing the `wrong' equilibrium. The sequel shows that the gap is indeed diminished when customers are fully rational.

\subsection{Numerical comparison between ABM and full rationality}\label{sec:NE_with_bias_vs_ABM}

The queueing game under the bounded rationality assumption (BR) does not take into account the information structure given by the probabilities $p$ and $q$. A more appropriate comparison with the ABM (AM) is therefore with the queueing game under assumption (FR) that assumes customers make the posterior update of expectations as detailed in Section~\ref{sec:signals}. For an arrival customer with belief $i$ ($i \in \calC$) in the ABM, the posterior arrival rates of the other customers with beliefs $a$ and $b$ are given by (\ref{eqn:Bias_tagged_class_a}) and (\ref{eqn:Bias_tagged_class_b}),
$\vc{\nu}_{i} := \lambda (\alpha_{ai}, \alpha_{bi})$.

First consider the service time distribution of an arrival customer with belief $i$ ($i \in \calC$). Similar to (\ref{eqn:Biased_service_time_class_i}), let $\vc{z}_{i}$ ($i \in \calC$) denote the service time with mean $\zeta_{i}$, then
\begin{align*}
 \vc{z} := (z_a, z_b) = (\eta_{aa} \vc{x}_{a} + \eta_{ba} \vc{x}_{b}, \eta_{ab} \vc{x}_{a} + \eta_{bb} \vc{x}_{b})\ .
\end{align*}
Therefore, in the view point of belief $a$ (belief $b$) arrival customers, the arrival rates to the system are $\vc{\nu}_{a}$ ($\vc{\nu}_{b}$), and the service time random variables of beliefs $a$ and $b$ customers are distributed as $\vc{z}$. Furthermore, the equilibrium arrival distributions, denoted by $(\hat{\vc{p}}_{a}^{e}, \hat{\vc{p}}_{b}^{e})$, are computed by
\begin{align}
\label{eqn:Alg_2_bias}
(\hat{\vc{p}}_{a}^{e}, \bullet) = \mathbf{Alg.2}(\vc{\nu}_{a}, \vc{z}, \epsilon, \delta),\qquad
(\bullet, \hat{\vc{p}}_{b}^{e}) = \mathbf{Alg.2}(\vc{\nu}_{b}, \vc{z}, \epsilon, \delta)
\end{align}

In what follows, we compare the cdf of arrival time and mean waiting time of the ABM with those obtained for the equilibrium by (\ref{eqn:Alg_2_bias}). In line with the customer rationality and knowledge assumptions in Section~\ref{sec:belief}, the results which are computed by Algorithm 2 are referred to as (FR).
The parameter settings are the same as that of the preceding section, i.e., $\lambda = 10$, $(p,q) = (0.5, 0.9)$ and $(\chi_{a}, \chi_{b}) = (4.0, 2.0)$, then we have $\vc{\nu}_{a} = (8.2, 1.8)$, $\vc{\nu}_{b} = (1.8, 8.2)$ and $(\zeta_{a}, \zeta_{b}) = (3.8, 2.2)$.

The results are displayed in Figures~\ref{fig:deterministicnewithbiasvsdl}--\ref{fig:mixofgeometricnewithbiasvsdl}, and we can see that the resulting cdfs of the arrival distributions and mean waiting times of the equilibrium (FR) are closer to the outcome of the ABM (AM). However, the distribution resulting from the learning dynamics of the ABM is still more spread out than the equilibrium distribution, although the difference is much smaller than that observed in the comparison with the bounded rationality equilibrium of the previous section. In particular, the optimistic type $b$ customers consistently arrive at $t = 0$ with
positive probabilities even when the equilibrium prediction is that they only start arriving later. On the other hand, pessimistic type $a$ customers arrive at the last slots with positive probability even though they would not do so in equilibrium. These discrepancies are due to the estimation bias resulting from ignoring the system parameters and information structure.

\begin{figure}[H]
	\centering
	\includegraphics[width = 0.9\linewidth]{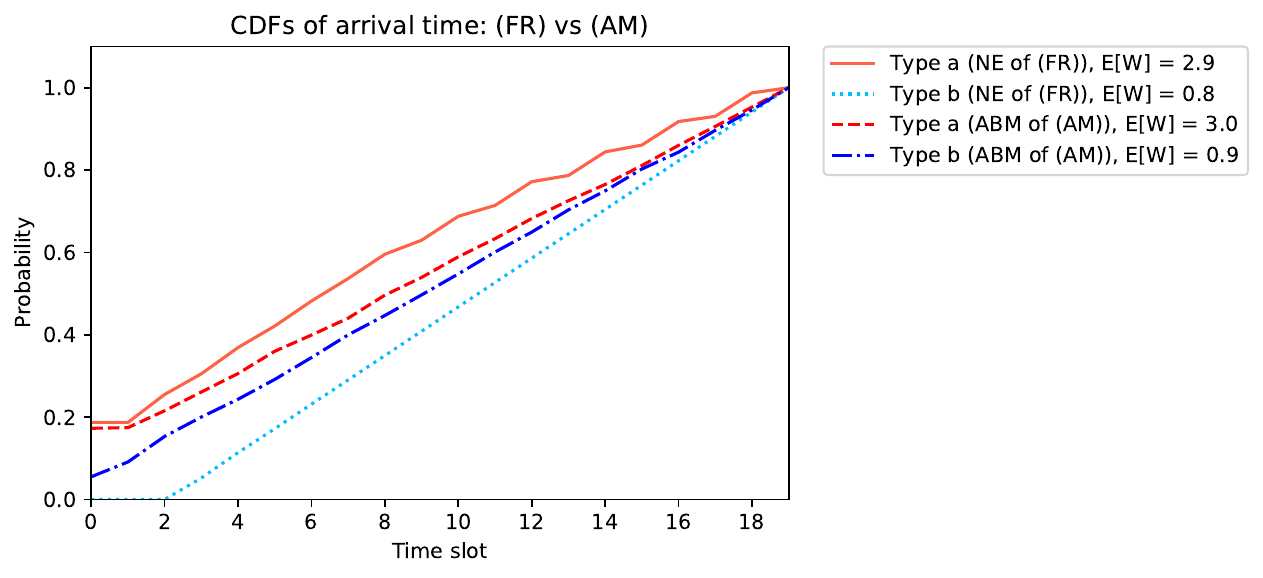}
	\caption{Comparison between cdfs of the equilibrium arrival distributions calculated by (\ref{eqn:Alg_2_bias}) and the averaged arrival distributions (\ref{eqn:ABM_averaged_arrival_dist}) calculated by ABM, where the service time follows a deterministic distribution.}
	\label{fig:deterministicnewithbiasvsdl}
\end{figure}

\begin{figure}[H]
	\centering
	\includegraphics[width = 0.9\linewidth]{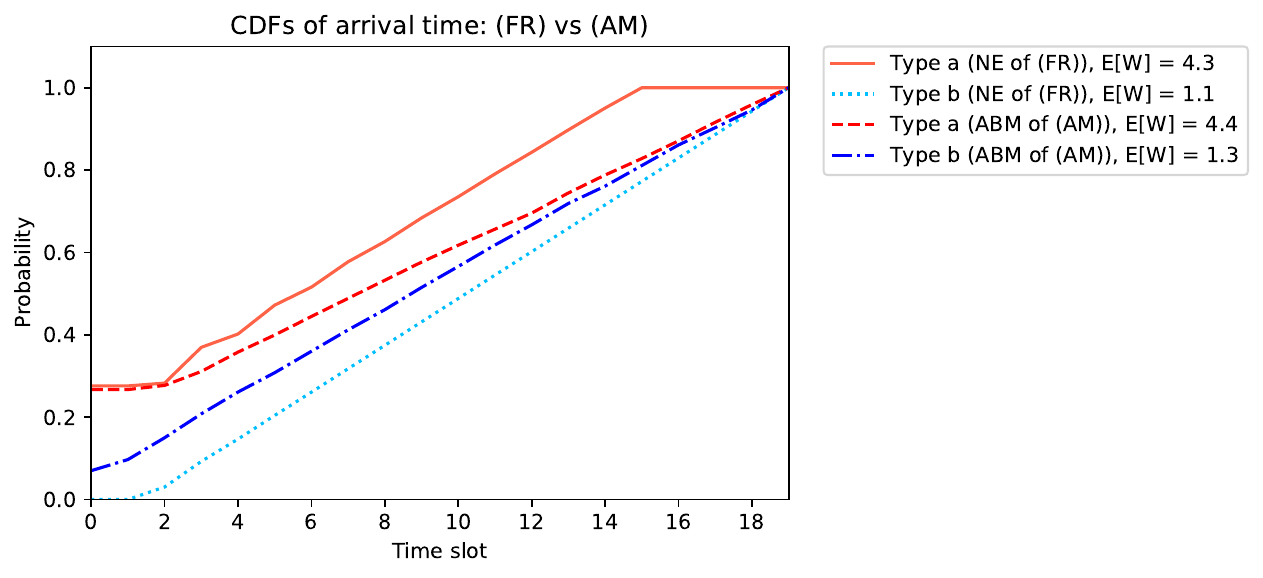}
	\caption{Comparison between cdfs of the equilibrium arrival distributions calculated by (\ref{eqn:Alg_2_bias}) and the averaged arrival distributions (\ref{eqn:ABM_averaged_arrival_dist}) calculated by ABM, where the service time follows a geometric distribution.}
	\label{fig:geometricnewithbiasvsdl}
\end{figure}

\begin{figure}[H]
	\centering
	\includegraphics[width = 0.9\linewidth]{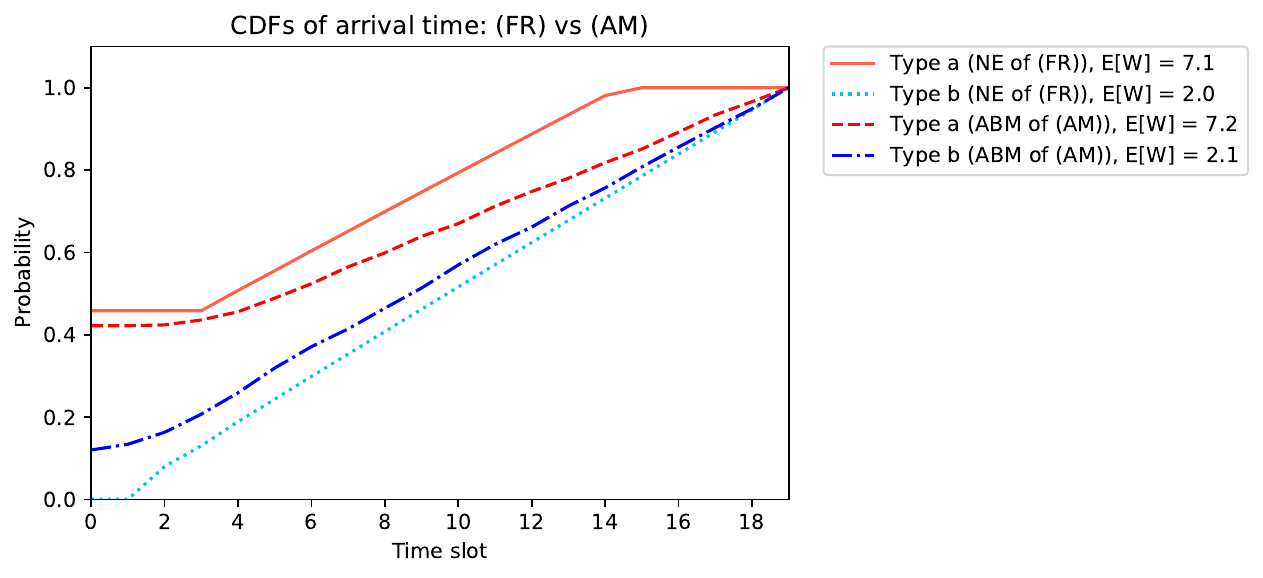}
	\caption{Comparison between cdfs of the equilibrium arrival distributions calculated by (\ref{eqn:Alg_2_bias}) and the averaged arrival distributions (\ref{eqn:ABM_averaged_arrival_dist}) calculated by ABM, where the service time follows a mixtures of two geometric distributions.}
	\label{fig:mixofgeometricnewithbiasvsdl}
\end{figure}

\section{Conclusion}\label{sec:conclusion}

This paper has introduced a general framework to analyze the arrival distributions to a bottleneck queue with multiple customer types that differ in their belief regarding the service time distribution. Two models were presented. First, a game where rational customers that know all of the system parameters, including the mechanism driving uncertainty, with the solution given by a Nash equilibrium. Second, a dynamic learning model in which customers adapt their decisions based on past experience. Constructive procedures have been provided for the computation of the arrival distribution in both cases.

Our framework can be extended in several natural directions. From an economic point of view, more elaborate cost functions can be considered, e.g. tardiness or order penalties and rewards for service. The customers may have heterogeneous features that may be due to individual preferences (e.g. waiting costs) or partial information (e.g. random value of service). Such games may also combine multiple $n>2$ types of customers or beliefs. From a queueing perspective, different system configurations may be considered, e.g. a multi-server system with uncertainty regarding the number of servers. Both our equilibrium and learning analysis can be extended to the above settings.

The learning model of Section \ref{sec:learn} can be seen as a simple first attempt at formulating interesting learning dynamics for the arrival time problem to a bottleneck queue. There are several issues that are perhaps of interest to explore further. Notably, accurate asymptotic analysis of the average arrival distribution. This can be combined with further investigation of the bias brought on by the average waiting time estimation, and can it be improved by decision rules that are still reasonably simple. For example, we assumed that the information quality is unknown, i.e. customers just know that $q>\frac{1}{2}$. If we further assume that customer observe service times, then $q$ can be estimated as follows. Denote by $X_{i}^{d}$ the service time observed on day $d$ when the belief was $y\in\mathcal{C}$, and let $\bar{X}_i$ be the respective average service time on days (up to day $d$) that the service belief was $y$. By the law of large numbers, $\bar{X}_{a}^{(d)}\parrow q\chi_a+(1-q)\chi_b$, and this yields an asymptotically unbiased estimator
\[
\bar{q}_a^{(d)}=\frac{\bar{X}_a^{(d)}-\chi_b}{\chi_a-\chi_b}\ .
\]
An interesting question is then if the above can be used to make a simpler decision role that relies on multiple estimators for the system parameters, i.e. arrival and service rates as well as the information strength. Of course, such a learning process again assumes customers have the ability and resources to make more elaborate computations for the decision making, but it is interesting to examine what can be achieved with slightly more sophisticated estimation and decision rules.

\section*{Acknowledgments}\label{sec:acknowledgments}
The authors wish to thank Refael Hassin for his helpful comments on a preliminary draft of this manuscript. The first author's work was supported by the NWO Gravitation project {\sc Networks}, grant number 024.002.003. The second author's work was supported by JSPS KAKENHI Grant No. JP18K11186.

\begin{appendices}

\section{Detailed proof of Lemma \ref{lemma:queue_order}}\label{sec:appn_A}

To prove Lemma \ref{lemma:queue_order} we construct a coupling of the virtual waiting time and queueing processes for both types of customers and show that for every sample path type $a$ customers face a longer queue and waiting time than type $b$ customers for any arrival time $t\in\mathcal{T}$.

\begin{lemma}\label{lemma:workload_dominance}
Let $X_{i,k}\sim\exp(\mu_i)$ denote the job size of the $k$'th arrival when the service distribution is of type $i\in\mathcal{C}$ customers. If $\mu_a<\mu_b$ then the virtual waiting time $V_a$ and the queue length $Q_a$ of type $a$ customers is stochastically larger than the virtual waiting time $V_b$ and queue length $Q_b$ of type $b$ customers for any given strategy profile $(F_a,F_b)$:
\begin{equation*}
V_b(t)<_{\rm st} V_a(t),\ Q_b(t)\leq_{\rm st} Q_a(t),\ \forall t\in[0,T]\ ,
\end{equation*}
which further implies that
\begin{equation*}
\E V_b(t)\leq \E V_a(t),\ \E Q_b(t)\leq \E Q_a(t),\ \forall t\in[0,T]\ .
\end{equation*}
\end{lemma}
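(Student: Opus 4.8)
The plan is to build a single probability space carrying both hypothetical workload processes simultaneously and to show that on this space the type $a$ quantities dominate the type $b$ quantities path by path; the stochastic orderings and expectation inequalities then follow at once. The crucial structural fact, already emphasized in Section~\ref{sec:model}, is that the arrival process does not depend on the service-time beliefs: under a fixed profile $(F_a,F_b)$ both hypothetical queues see \emph{the same} stream of arrival epochs $0\le T_1\le T_2\le\cdots$, with the same random tie-breaking order for customers sharing an epoch. I would therefore generate the arrival epochs once and couple the service requirements through a common source of randomness: for the $k$-th arrival draw a single $U_k\sim\mathrm{Uniform}(0,1)$ and set $X_{a,k}=-\mu_a^{-1}\ln(1-U_k)$ and $X_{b,k}=-\mu_b^{-1}\ln(1-U_k)$. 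Then $X_{i,k}\sim\exp(\mu_i)$ has the correct marginal for each type, while $X_{a,k}=(\mu_b/\mu_a)X_{b,k}\ge X_{b,k}$ holds surely (and strictly whenever $X_{b,k}>0$, which is almost sure), because $\mu_a<\mu_b$.

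The first step is workload (virtual waiting time) dominance. Writing $W_{i,k}^-:=V_i(T_k^-)$ for the workload just before the $k$-th arrival in system $i$, the FCFS workload obeys the Lindley-type recursion $W_{i,k+1}^-=\bigl(W_{i,k}^-+X_{i,k}-(T_{k+1}-T_k)\bigr)^+$, with the workload decaying at unit rate between arrivals. Since the map $(w,x)\mapsto(w+x-\Delta)^+$ is nondecreasing in both $w$ and $x$ for each fixed $\Delta\ge0$, an induction on $k$ starting from $W_{a,1}^-=W_{b,1}^-=0$ and using $X_{a,k}\ge X_{b,k}$ gives $W_{a,k}^-\ge W_{b,k}^-$ for all $k$, and hence $V_a(t)\ge V_b(t)$ for every $t\in[0,T]$ (the inter-arrival decay is identical, so the ordering is preserved between jumps as well).

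The second step converts this into queue-length dominance via departure times. Under FCFS the customer arriving at $T_k$ departs at $D_{i,k}=T_k+V_i(T_k^-)+X_{i,k}$, so combining $V_a(T_k^-)\ge V_b(T_k^-)$ with $X_{a,k}\ge X_{b,k}$ yields $D_{a,k}\ge D_{b,k}$ for every $k$. Because the arrival times coincide, $Q_i(t)=\#\{k:T_k\le t\}-\#\{k:D_{i,k}\le t\}$, and $D_{a,k}\ge D_{b,k}$ gives $\{k:D_{a,k}\le t\}\subseteq\{k:D_{b,k}\le t\}$, whence $Q_a(t)\ge Q_b(t)$ pathwise for all $t$. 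Since the coupling reproduces the correct marginal law of each process, the pathwise inequalities upgrade to $V_b(t)\le_{\rm st}V_a(t)$ and $Q_b(t)\le_{\rm st}Q_a(t)$; strictness of the waiting-time statement follows on the event (of positive probability once a customer has arrived) that $V_b(t)>0$, where the job-size gap is strict. The expectation inequalities $\E V_b(t)\le\E V_a(t)$ and $\E Q_b(t)\le\E Q_a(t)$ are then immediate.

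I expect the main obstacle to be bookkeeping rather than conceptual: carefully handling atoms of $F_a,F_b$ (simultaneous arrivals) and the random within-atom ordering so that the recursion and the counting identity remain valid, and pinning down the precise sense in which $<_{\rm st}$ holds strictly. Both are resolved by insisting that the \emph{same} realized arrival epochs and the \emph{same} tie-breaking permutation drive both coupled systems, which is exactly what makes every comparison hold path by path.
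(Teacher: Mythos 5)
Your proposal is correct and follows essentially the same route as the paper: both couple the two hypothetical systems through a common arrival stream and a common uniform $U_k$ yielding $X_{a,k}=(\mu_b/\mu_a)X_{b,k}\ge X_{b,k}$, and then deduce pathwise dominance of workload and queue length. The only difference is bookkeeping—you track the workload via the Lindley recursion at arrival epochs and order the FCFS departure times, whereas the paper uses the reflection-map representation $V_i(t)=A_i(t)-t+L_i(t)$ and a busy-period argument—but the underlying coupling idea and conclusion are the same.
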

\begin{proof}
The virtual waiting time, or workload, for type $i\in\mathcal{C}$ customers can be constructed as follows: the input process of work is a nonhomogeneous Poisson process defined by the arrival strategies, each job has a size of $X_i$, and work is continuously processed at a rate of one per unit of time. The following arguments are for a sample path constructed of the identical arrival process and coupled sequences of job sizes $\{X_{i,k}\}_{k=1}^\infty$ such that $X_{a,k}\geq X_{b,k}$ for all $k\geq 1$. Specifically, let $U_k\sim$Unif$[0,1]$ and denote
\[
X_{ki}=-\frac{1}{\mu_i}\log U_k\ ,
\]
then clearly $X_{a,k}=\frac{\mu_b}{\mu_a}X_{b,k}>X_{b,k}$ for all $k\geq 1$. For any strategy profile $(F_a,F_b)$, the virtual waiting time for a type $i\in\mathcal{C}$ arriving at $t\in\mathcal{T}$ is given by
\[
V_i(t):=A_i(t)- t+L_i(t)\ ,
\]
where $A_i(t)$ is a nonhomogeneous compound Poisson process with cumulative rate $\lambda_a F_a(t)+\lambda_b F_b(t)$ and jump sizes $X_i$, and
\[
L_i(t):=\left(-\inf_{s\in[0-,t]}\{A_i(s)- s\}-V_0\right)^{+}\ .
\]
Observe that $L_i(t)$ is a non-decreasing process that increases only when $V_i(t)=0$ and that $A_a(0)>0$ if and only if $A_b(0)>0$. Suppose that $A_a(0),A_b(0)>0$ and denote $\tau_i=\inf\{t\geq 0:\ V_i(t)=0\}$, then as $A_a(t)\geq A_b(t)$ we have that
\[
V_a(t)=A_a(t)- t \geq A_b(t)- t= V_b(t),\ t\in[0,\tau_b]\ .
\]
Furthermore, as $X_{a,k}>X_{b,k}$ for all jobs, the number of departures from queue $a$ is at most equal to the number of departures from queue $b$ during $[0,\tau_b]$ because both servers are working continuously. The common arrival time of jobs to both systems further implies that
\[
Q_a(t) \geq Q_b(t),\ t\in[0,\tau_b]\ .
\]
The process $L_b(t)$ increases after $\tau_b$ as long as $V_b(t)=0$, but clearly $0=V_b(t)\leq V_a(t)$ and $0=Q_b(t)\leq Q_a(t)$ during any such period. If a jump $A_b(t)-A_b(t-)$ occurs at $t<\tau_a$ then $V_a(t)$ has a bigger jump than $V_b(t)$ both and the dominance remains until the next time $s$ such that $V_b(s)=0$. This continues until $\tau_a$ for which
\[
V_a(\tau_a)=V_b(\tau_a)=Q_a(\tau_a)=Q_b(\tau_a)=0\ ,
\]
and both $V_i$ stay at zero until the next arrival time and the same process starts again with a new jump for both processes. The same argument is valid when $A_a(0)=A_b(0)=0$ and the process starts at the time of the first arrival. We conclude that for every sample path $V_a(t)\geq V_b(t)$ for all $t\in\mathcal{T}$ and $V_a(s)> V_b(s)$ for all $\mathcal{S}\subset\mathcal{T}$ such that $\mathcal{S}$ is a non-empty union of positive intervals, i.e., the set $\mathcal{S}$ has non-zero measure. Similarly, the same holds for $Q_a(t)$ and $Q_b(t)$. 
\end{proof}

\section{Best response algorithms}\label{sec:appn_B}

This appendix details Algorithm~1 and a necessary subroutine that we denote Algorithm~1-1. The purpose of the algorithm is to find the symmetric within type best response arrival distribution $\vc{p}_{i}$, that satisfies the equilibrium condition \eqref{eqn:NE_Arrival_Distributions}, when the customers of the other type arrive according to $\vc{p}_{-i}$. The outline is as follows: (1) finding the first time slot $\theta\in\mathcal{T}$ such that $p_{i,\theta}>0$, (2) a bisection search for the value of $p_{i,\theta}$ which uniquely defines all probabilities $p_{i,u}$ for $u>\theta$, (3) stopping the procedure when $\sum_{u=\theta}^Tp_{i,u}=1$. Algorithm~1 is designed for step (1) and checking the equilibrium condition of (3), and Algorithm~1-1 performs the bisection search of part (2).

Note that the implementation of the algorithm may be refined to make it more efficient, but the purpose of the description here is to provide a concise description. For example, in line 4 of Algorithm~1-1, we may choose $p_{i,\theta}^{(R)}$ in a slightly better way, e.g., $p_{i,\theta}^{(R)} = \min\left\{1, {\rm Equation} \ \eqref{eq:NE_p+} \right\}$, but we've chosen $p_{i,\theta}^{(R)} := 1$ for the simplicity of the presentation.

\begin{algorithm}[h]
\renewcommand{\thealgorithm}{}
\caption{\textbf{1}: Symmetric best response of type $i$ to $\vc{p}_{-i}$.}
\label{alg:best_response}
\textbf{Input}: $\vc{p}_{-i}$, $\vc{\lambda} := (\lambda_{a}, \lambda_{b})$, $\vc{x}_{i}$, $\epsilon>0$ \\
\textbf{Output}: $\vc{p}_{i}$
\begin{algorithmic}
\State init $\theta := 0$
\While{$\theta \le T$}
	\State init $\vc{p}_{i} := (0,0,\ldots,0)$
	\State compute $(w_{i,t} ; 0 \le t \le \theta)$ by \eqref{eqn:w_{i,t}}
	\State set $\ol{w}_{i} := w_{i,\theta}$, $w_{\rm min} := \min( w_{i,t} ; 0 \le t \le \theta-1 )$, where $\min (\phi) := \infty$ 
	\If{$\ol{w}_{i} < w_{\rm min}$}
		\State compute $(p_{i,t} ; \theta \le t \le T)$ by {\bf Alg.1-1}($\vc{p}_{i}, \vc{p}_{-i}, \vc{\lambda}, \vc{x}_{i}, \theta, \epsilon$)
	\Else
		\State $\theta := \theta + 1$
	\EndIf
\EndWhile
\State \textbf{return} {$\vc{p}_{i}$}
\end{algorithmic}
\end{algorithm}

\begin{algorithm}[H]
\renewcommand{\thealgorithm}{}
\caption{\textbf{1-1}: Subroutine in Algorithm 1 (Bisection search).}
\label{alg:subroutine}
\textbf{Input}: $\vc{p}_{i}$, $\vc{p}_{-i}$, $\vc{\lambda} := (\lambda_{a}, \lambda_{b})$, $\vc{x}_{i}$, $\theta$, $\epsilon > 0$\\
\textbf{Output}: $\vc{p}_{i}$, $\theta$
\begin{algorithmic}
\State init $\vc{p}_{i}^{({\rm L})} := (0,0,...,0)$, $\vc{p}_{i}^{({\rm M})} := (0,0,...,0)$, $\vc{p}_{i}^{({\rm R})} := (0,0,...,0)$
\State set $p_{i,\theta}^{({\rm R})} := 1$, $p_{i,\theta}^{({\rm M})} := (p_{i,\theta}^{(L)} + p_{i,\theta}^{({\rm R})})/2$
\While{true}
	\For{$k = {\rm L}, {\rm M}, {\rm R}$}
		\State compute $w_{i,\theta}$ by \eqref{eqn:w_{i,t}} with $\vc{p}_{i}:= \vc{p}_{i}^{(k)}$
		\State init $\ol{w}_{i}^{(k)} := w_{i,\theta}$ 
		\State compute $(p_{i,t}^{(k)} ; \theta + 1 \le t \le T)$ by \eqref{eqn:NE_Arrival_Distributions} with $\ol{w}_{i} := \ol{w}_{i}^{(k)}$, $\vc{p}_{i}:= \vc{p}_{i}^{(k)}$
	\EndFor
	\If{$1 -\epsilon < || \vc{p}_{i}^{(\rm M)} || < 1 + \epsilon$}
		\State set $\vc{p}_{i} := \vc{p}_{i}^{(\rm M)}$, $\theta := \infty$
		\State \textbf{break}
	\ElsIf{$|| \vc{p}_{i}^{(\rm L)} || > 1$}
		\State set $\theta := \theta + 1$
		\State \textbf{break}
	\ElsIf{$|| \vc{p}_{i}^{(\rm M)} || < 1$}
		\State set $p_{i, \theta}^{(\rm L)} := p_{i, \theta}^{(\rm M)}$, $p_{i, \theta}^{(\rm M)} := (p_{i, \theta}^{(\rm L)} + p_{i, \theta}^{(\rm R)})/2$
	\ElsIf{$|| \vc{p}_{i}^{(\rm M)} || > 1$}
		\State set $p_{i, \theta}^{(\rm R)} := p_{i, \theta}^{(\rm M)}$, $p_{i, \theta}^{(\rm M)} := (p_{i, \theta}^{(\rm L)} + p_{i}^{(\rm R)})/2$
	\EndIf
\EndWhile
\State \textbf{return} $\vc{p}_{i}$, $\theta$
\end{algorithmic}
\end{algorithm}


\end{appendices}



\begin{thebibliography}{10}

\bibitem{B2017}
J.~Breinbjerg.
\newblock Equilibrium arrival times to queues with general service times and non-linear utility functions.
\newblock {\em European Journal of Operational Research}, 261(2):595 -- 605, 2017.

\bibitem{book_BK2012}
H.~Bruneel and B. G.~Kim.
\newblock {\em {Discrete-time models for communication systems including ATM}}, 
\newblock Springer Science \& Business Media, 1993.

\bibitem{CM1991}
H.~Chen and A.~Mandelbaum. 
\newblock Discrete flow networks: bottleneck analysis and fluid approximation.
\newblock {\em Mathematics of operations research}, 16(2), 408-446, 1991.

\bibitem{CV2016}
S.~Cui and S.~Veeraraghavan.
\newblock Blind queues: The impact of consumer beliefs on revenues and congestion.
\newblock {\em Management Science}, 62(12):3656--3672, 2016.

\bibitem{book_DVJ2003}
D.J.~Daley and D.~Vere-Jones.
\newblock {\em {An Introduction to the Theory of Point Processes. Volume I: Elementary Theory and Methods}}, 
\newblock Springer, 2003.

\bibitem{DV2014}
L.~Debo and S.~Veeraraghavan.
\newblock Equilibrium in queues under unknown service times and service value.
\newblock {\em Operations Research}, 62(1):38--57, 2014.

\bibitem{EM2016}
A.~Economou and A.~Manou.
\newblock {Strategic behavior in an observable fluid queue with an alternating service process}.
\newblock {\em European Journal of Operational Research}, 254(1):148--160, 2016.

\bibitem{GH1983}
A.~Glazer and R.~Hassin.
\newblock {?/M/$\textit{1}$: On the equilibrium distribution of customer arrivals}.
\newblock {\em European Journal of Operational Research}, 13(2):146--150, 1983.

\bibitem{book_H2016}
R.~Hassin.
\newblock {\em Rational Queueing}.
\newblock CRC Press, 2016.

\bibitem{book_HH2003}
R.~Hassin and M.~Haviv.
\newblock {\em {To Queue or Not to Queue: Equilibrium Behavior in Queueing Systems}}, 
\newblock Springer, 2003.

\bibitem{HK2011}
R.~Hassin and Y.~Kleiner.
\newblock Equilibrium and optimal arrival patterns to a server with opening and closing times.
\newblock {\em IIE Transactions}, 43(3):164--175, 2011.

\bibitem{H2013}
M.~Haviv.
\newblock When to arrive at a queue with tardiness costs?
\newblock {\em Performance Evaluation}, 70(6):387--399, 2013.

\bibitem{HM2012}
M.~Haviv and I.~Milchtaich.
\newblock Auctions with a random number of identical bidders.
\newblock {\em Economics Letters}, 114(2):143--146, 2012.

\bibitem{HR2015}
M.~Haviv and L.~Ravner.
\newblock Strategic timing of arrivals to a finite queue multi-server loss system.
\newblock {\em Queueing Systems}, 81(1):71--96, 2015.

\bibitem{HJ2015}
H.~Honnappa and R.~Jain.
\newblock Strategic arrivals into queueing networks: the network concert queueing game.
\newblock {\em Operations Research}, 63(1):247--259, 2015.

\bibitem{HC2015}
T.~Huang, and Y. J.~Chen. 
\newblock Service systems with experience‐based anecdotal reasoning customers. 
\newblock {\em Production and Operations Management}, 24(5):778--790, 2015.

\bibitem{I2018}
R.~Ibrahim.
\newblock Managing queueing systems where capacity is random and customers are impatient.
\newblock {\em Production and Operations Management}, 27(2):234--250, 2018.

\bibitem{JS2013}
S.~Juneja and N.~Shimkin.
\newblock The concert queueing game: strategic arrivals with waiting and tardiness costs.
\newblock {\em Queueing Systems}, 74(4):369--402, 2013.

\bibitem{JS2018}
S.~Juneja and N.~Shimkin.
\newblock On the computation of dynamic user equilibrium in the multiclass transient fluid queue.
\newblock {\em ACM SIGMETRICS Performance Evaluation Review}, 45(3):137--142, Mar. 2018.

\bibitem{MS1996}
D.~Monderer and L.~S. Shapley.
\newblock Potential games.
\newblock {\em Games and Economic Behavior}, 14(1):124 -- 143, 1996.

\bibitem{book_OR1994}
M.~J. Osborne and A.~Rubinstein.
\newblock {\em A Course in Game Theory}.
\newblock MIT press, 1994.

\bibitem{R2014}
L.~Ravner.
\newblock Equilibrium arrival times to a queue with order penalties.
\newblock {\em European Journal of Operational Research}, 239(2):456--468, 2014.

\bibitem{SMF2019}
Y.~Sakuma, H.~Masuyama, and E.~Fukuda.
\newblock Equilibrium arrival distributions in a discrete-time single-server
  queueing game with an acceptance period for a poissonian population of
  customers.
\newblock {\em European Journal of Operational Research}, 283(1):253--264", 2020.

\bibitem{T1979}
D.~M. Topkis.
\newblock Equilibrium points in nonzero-sum n-person submodular games.
\newblock {\em SIAM Journal on Control and Optimization}, 17(6):773--787, 1979.

\bibitem{VD2009}
S.~Veeraraghavan and L.~Debo.
\newblock Joining longer queues: Information externalities in queue choice.
\newblock {\em Manufacturing \& Service Operations Management}, 11(4):543--562, 2009.

\bibitem{ZL2016}
H.~Zhang and B.~Li.
\newblock {Characterizations of discrete compound Poisson distributions}.
\newblock {\em Communications in Statistics-Theory and Methods}, 45(22):6789--6802, 2016.

\end{thebibliography}
\end{document}